\newtheorem{theorem}{Theorem}[section]
\newtheorem{lemma}[theorem]{Lemma}
\newtheorem{proposition}[theorem]{Proposition}
\newtheorem{definition}{Definition}[section]
\newtheorem{assumption}{Assumption}[section]
\newtheorem{remark}{Remark}[section]
\def\bbn{\mathbb N}
\def\bbz{\mathbb Z}
\def\bbr{\mathbb R}
\newcommand{\norm}[1]{\left\|#1 \right\|}
\newcommand{\absval}[1]{\mid {#1} \mid}
\def\BibTeX{{\rm B\kern-.05em{\sc i\kern-.025em b}\kern-.08em
    T\kern-.1667em\lower.7ex\hbox{E}\kern-.125emX}}
\begin{document}
\title{Distance Between Stochastic Linear Systems}
\author{Venkatraman Renganathan, \IEEEmembership{Member, IEEE}, and Sei Zhen Khong, \IEEEmembership{Senior Member, IEEE}
\thanks{This work was supported in part by the National Science and Technology Council of Taiwan (grant numbers: 113-2222-E-110-002-MY3, 114-2622-8-110-00 and 114-2218-E-007-011).}
\thanks{V. Renganathan is with the Faculty of Engineering \& Applied Sciences, Cranfield University, Bedfordshire, United Kingdom. Email: v.renganathan@cranfield.ac.uk. S. Z. Khong is with the Department of Electrical Engineering, National Sun Yat-sen University, Kaohsiung 80424, Taiwan. Email: szkhong@mail.nsysu.edu.tw.}
}

\maketitle

\begin{abstract}
While the existing stochastic control theory is well equipped to handle dynamical systems with stochastic uncertainties, a paradigm shift using distance measure based decision making is required for the effective further exploration of the field. As a first step, a distance measure between two stochastic linear time invariant systems is proposed here, extending the existing distance metrics between deterministic linear dynamical systems. In the frequency domain, the proposed distance measure corresponds to the worst-case point-wise in frequency Wasserstein distance between distributions characterising the uncertainties using inverse stereographic projection on the Riemann sphere. For the time domain setting, the proposed distance corresponds to the gap metric induced type-q Wasserstein distance between the distribution characterising the uncertainty of plant models. Apart from providing lower and upper bounds for the proposed distance measures in both frequency and time domain settings, it is proved that the former never exceeds the latter. The proposed distance measures will facilitate the provision of probabilistic guarantees on system robustness and controller performances.
\end{abstract}

\begin{IEEEkeywords}
distance measure, robust control, stochastic systems, gap metric, $\nu$-gap
\end{IEEEkeywords}

\section{Introduction} \label{sec:introduction}
\IEEEPARstart{A}{ny} valid metric in a vector space will induce a topology which will then facilitate a rigorous mathematical construct for performing analysis in that metric space. Inspired by these basic facts, researchers in the early $1980$s aimed at constructing a valid distance metric between dynamical systems in the hope that this research direction will pave way for a mathematically feasible and provable robust control analysis. Predominantly, the following metrics have received a vast appreciation in the control community namely: 1) Gap metric \cite{zames1980unstable, el1985gap, georgiou1988computation, georgiou1989optimal}, 2) Graph metric \cite{vidyasagar1984graph}, and 3) $\nu$-Gap metric \cite{vinnicombe_tac_1993, vinnicombe2001uncertainty}. Authors in \cite{lanzon2009distance} proposed a generic notion of distance between systems that can be used to measure discrepancy between open-loop systems in a feedback sense under several uncertainty structures.
All the these metrics are equivalent to each other in the sense that they induce the same topology in the space of dynamical systems where closed loop stability happens to be a robust property. Such robust stability guarantees come with the presumption that all system models are equally probable in the considered neighbourhood set of plant models around the nominal plant model. However, nature is unpredictable while playing the role of an adversary inflicting uncertainties into the system dynamics and having an equally probable plants based assumption might downplay our ability to fully understand the nature's intention. In that sense, one can associate a probability distribution on the realization of the plants within the plant model ambiguity set in consideration. This initiated a research on probabilistic robust control using gap metric in \cite{RenganathanProbabilisticGap}. This paper is an extension along the lines of \cite{RenganathanProbabilisticGap, nystrom2024stereographic} but not with respect to plant models of the same stochastic system rather between two different stochastic systems in terms of their associated possible perturbed plant models. On a similar note, researchers in \cite{Fabrizio_ACC, calafiore2006scenario} also proposed several probabilistic robust control approaches to handle the nature violating the assumption on uncertainties with small probabilities.

Authors in \cite{afsari2013alignment} proposed a distance between two linear dynamical systems and called the alignment distance which is computed by finding the change of basis that best aligns the state-space realizations of the two linear dynamical systems. Similarly, authors in \cite{hanzon1982riemannian} came up with a Riemannian metric on the space of stable linear systems, with applications to identification problems. One of the prominent attempts in investigating distance between stochastic dynamical systems was done by authors in \cite{baggio2017lti}, where they came up with distance between spectral densities of linear time invariant (LTI) stochastic processes using behavioural theory. The theory of stochastic systems is not just limited to the field of mathematics but rather finds its application in many other fields of science. For instance, researchers in the field of medicine have started to think along in this direction too in \cite{sherlock2024distance} by coming up with an algorithmic approach to compute and identify appropriate distance metrics for the quantitative comparison of stochastic model outputs and time-evolving stochastic measurements of a system. Many researchers have analysed the robust performance of controllers in the robust control community through the lens of the distance metric theory. Our problem formulation with distance between stochastic LTI systems will eventually evolve towards analysing the resulting probabilistic robust performance of a stabilising controller of one of the stochastic LTI system. However, the main focus of this manuscript will only be on the proposal of an appropriate distance measure and obtaining bounds on them. The subsequent analysis with respect to the probabilistic robust performance is left as a future work. Similarly, adding probabilistic rigour on top of the associated robust stability analysis along the lines of \cite{khong2024robust, khong2013reconciling, zhao2021stabilization, su2025exponential} is also left as a future work. Our proposed research also has connections with frequency domain model validation problem considered in \cite{Abhishek_ACC_2013} where authors presented a frequency domain interpretation
of Monge-Kantorovich optimal transport. 

\emph{Contributions:} The main contributions of this paper are:
\begin{enumerate}
    \item We propose valid distance measures between two stochastic linear dynamical systems in the single input single output (SISO) case both in the frequency domain setting and in the time domain setting. 
    \item In the frequency domain setting, the proposed distance measure given by \eqref{eqn_distance_btw_p1_p2} refers to the worst-case point-wise-in-frequency type-$q$ chordal metric induced Wasserstein distance between distributions governing the uncertainties of the two stochastic linear systems in the Riemann sphere. Under the assumption of distributions being uniform in nature, a support distance based upper bound for proposed distance measure is given in Theorem \ref{theorem_support_distance_upper_bound}. On the other hand, using the deviation of the perturbed models of each systems from their respective nominal models, a lower bound for the proposed distance measure is given in Theorem \ref{theorem_triangle_lower_bound}. Extensions with respect to empirical distribution case are also given in Proposition \ref{proposition_empirical_distributions_distance} and Theorem \ref{theorem_triangle_lower_bound_time_domain_empirical}.
    \item In the time domain setting, the proposed distance measure given by \eqref{eqn_distance_between_distributions_system_models} refers to the gap metric induced type-$q$ Wasserstein distances between the distributions governing the uncertainty of the systems obtained through the push-forward of the distribution of uncertain system parameters under the measurable mapping that connects the parameters and the perturbed model. Upper bounds for the proposed distance measure are proposed in Proposition \ref{proposition_gap_diameter_upper_bound} and Theorem \ref{theorem_moment_upper_bound}. On the similar lines of the frequency domain setting, a lower bound for the proposed distance measure in the time domain is given using the deviation of the perturbed models of each systems from their respective nominal models in Proposition \ref{proposition_gap_lower_bound_using_nominal_gap}. 
    \item We also prove in Theorem \ref{theorem_distances_comparison} that for stochastic LTI systems, the proposed frequency domain distance never exceeds the time domain distance, mimicking the inequality relationship that exists between the $\nu$-gap metric and the gap metric in the deterministic systems setting.
    
\end{enumerate}

\subsection*{Paper Organisation}
Following a detailed discussion on the notations and the preliminaries needed for problem formulation in both frequency and time domain settings in Section \ref{sec:notations}, we will first begin our problem formulation in the frequency domain setting and propose a valid distance metric in Section \ref{sec:frequency_domain}. Following that, we will present the analogous problem formulation in the time domain setting and propose an analogous valid distance metric in Section \ref{sec:time_domain}. The proposed distance measures in the frequency and time domain settings are compared in Section \ref{sec_comparison}. Finally, the paper is closed in Section \ref{sec:conclusions} along with the summary of findings and directions for future research.
Simulation results are provided throughout the paper to demonstrate the proposed concepts and guarantees. All the Matlab codes responsible for reproducing the simulation results provided in the paper can be found at \url{https://github.com/venkatramanrenganathan/stochastic-system-gap}.

\section{Notations \& Preliminaries} \label{sec:notations}
The cardinality and closure of the set $A$ are denoted by $\left | A \right \vert$ and $\overline{A}$ respectively. The set of real numbers, integers and the natural numbers are denoted by $\bbr, \bbz, \bbn$ respectively and the subset of natural numbers greater than a given constant say $a \in \bbn$ is denoted by $\bbn_{> a}$. The Euclidean norm of a vector $x \in \mathbb{R}^{n}$ is denoted by $\norm{x}_{2}$ or simply $\norm{x}$. The inner product between two vector $r_1, r_2 \in \mathbb{R}^{n}$ is denoted by $\langle r_1, r_2 \rangle := r^{\top}_1 r_2$. For a matrix $A \in \bbr^{n \times n}$, we denote its transpose, trace, determinant, and the maximum (minimum) singular values by $A^{\top}$,  $\mathbf{Tr}(A)$, $\mathrm{det}(A)$, and $\overline{\sigma}(A) \,  (\underline{\sigma}(A))$ respectively. An identity matrix of dimension $n$ is denoted by $I_{n}$. The notation $(\cdot)_{+} := \max(0, \cdot)$ shall be used to ensure positivity. For brevity of notation, we shall be abbreviating functions $f(x(t), y(t))$ as $f(t; x, y)$. The composition of two functions $f, g$ is denoted by $f \circ g$.

\subsection{Function Spaces \& Norms}
The space of complex numbers is denoted by $\mathbb{C}$ and $j$ represents the imaginary unit. For a complex variable $z \in \mathbb{C}$, we denote its complex conjugate as $z^{\star} \in \mathbb{C}$. Let $\mathbf{R}(s)$ denote the set of rational functions in $s \in \mathbb{C}$ with real coefficients. We use $\mathcal{P}(s) \subset \mathbf{R}(s)$ to denote the set of proper rational functions whose poles are in the open left half-plane. Let us denote the set of matrices with elements in $\mathbf{R}(s)$ as $\mathrm{mat}(\mathbf{R}(s))$ and similarly let us denote the set of matrices with elements in $\mathcal{P}(s)$ as $\mathrm{mat}(\mathcal{P}(s))$. A continuous-time signal $x \in \mathbb{R}^{n}$ is said to be in $\mathcal{L}_{2}$ space if it has bounded energy. Let $\mathcal{H}_{2}$ denote the space of Fourier transform of signals in $\mathcal{L}_{2}$ space but restricted to positive time. Dynamical systems are to be considered as operators on $\mathcal{H}_{2}$ and they will be called \emph{stable} if for any input $u \in \mathcal{H}_{2}$, the system output $y \in \mathcal{H}_{2}$. The Hardy space consisting of transfer functions of stable LTI continuous time systems is denoted by $\mathcal{H}_{\infty}$ and is equipped with the norm
\begin{align}\label{eqn_h_infty_norm_def}
    \norm{P}_{\mathcal{H}_{\infty}} 
    := \sup_{\substack{u \in \mathcal{H}_{2} \\ u \neq 0}} \frac{\norm{Pu}_{\mathcal{H}_{2}}}{\norm{u}_{\mathcal{H}_{2}}}.
\end{align}
Let $\mathbf{R}\mathcal{H}_{\infty} = \mathbf{R}(s) \cap \mathcal{H}_{\infty}$ and similarly, $\mathbf{R}\mathcal{L}_{\infty} = \mathbf{R}(s) \cap \mathcal{L}_{\infty}$, where $\mathcal{L}_{\infty}$ is the space of all functions that are essentially bounded on the imaginary axis with norm
\begin{align}
\norm{P}_{\mathcal{L}_{\infty}} 
:=
\mathop{\mathrm{ess\,sup}}_{\omega \in \mathbb{R}}
\bar{\sigma}(P(j \omega)).
\end{align}
The probability space is defined using a triplet $(\Omega, \mathcal{F}, \mathbb{P})$, where $\Omega, \mathcal{F}$, and $\mathbb{P}$ denote the sample space, event space and the probability function respectively. 
A probability distribution with mean $\mu$ and covariance $\Sigma$ is denoted by $\mathbb{P}(\mu, \Sigma)$ and, specifically $\mathfrak{N}_{d}(\mu, \Sigma)$, if the distribution is normal in $\mathbb{R}^{d}$. A real random vector $x \in \mathbb{R}^{n}$ following a distribution $\mathbf{f}_{x}$ is denoted by $x \sim \mathbf{f}_{x}$.
A complex random variable $Z$ on the probability space $(\Omega, \mathcal{F}, \mathbb{P})$ is a function $Z: \Omega \rightarrow \mathbb{C}$ such that both its real part 
$\mathfrak{R}(Z)$ and its imaginary part 
$\mathfrak{I}(Z)$ are real random variables on $(\Omega, \mathcal{F}, \mathbb{P})$. 
A uniform distribution defined over a compact set $A$ is denoted by $\mathcal{U}(A)$. 
Given  $q \ge 1$, the set of probability measures in $\mathcal{P}(\bbr^{d})$ with finite $q$\textsuperscript{th} moment is denoted by $\mathcal{P}_{q}(\bbr^{d}) := \left\{ \mu \in \mathcal{P}(\bbr^{d}) \mid \int_{\bbr^{d}} \left \Vert x \right \|^{q} \mathrm{d}\mu < \infty \right\}$. 
The $\mathcal{L}_{q}$ norm of a random variable $x \in \mathbb{R}$ with $x \sim \mathbf{f}_{x}$ is denoted by $\norm{x}_{\mathcal{L}_{q}} := \left(\int x^{q} d\mathbf{f}_{x}\right)^{1/q}$. If not specified, the notation $\norm{x}$ simply denotes the $\mathcal{L}_{2}$ norm of the random variable $x$.
The type-$q$ Wasserstein distance $\forall q \geq 1$ between distributions $\mathbb{Q}_1, \mathbb{Q}_2 \in \mathcal{P}_{q}(\bbr^{d})$ with $\Pi(\mathbb{Q}_1, \mathbb{Q}_2)$ being the set of all joint distributions on $\bbr^{d} \times \bbr^{d}$ with marginals $\mathbb{Q}_1$ and $\mathbb{Q}_2$ is given by
\begin{equation} \label{eqn_wass_distance}
W^{q}_{q}(\mathbb{Q}_1, \mathbb{Q}_2) 
\overset{\Delta}{=}  
\underset{\pi \in \Pi(\mathbb{Q}_1, \mathbb{Q}_2)}{\inf}  \int_{\bbr^{d} \times \bbr^{d}} \left \Vert z_1 - z_2 \right \|^{q} \pi(d z_{1}, d z_{2}). 
\end{equation}
\begin{definition}\label{define_pushforward}
Given measurable spaces $(X,\mathcal{X})$, and $(Y,\mathcal{Y})$, let $f: X \to Y$ be a measurable map, and $\mu$ a probability measure on $(X,\mathcal{X})$. Then, for any Borel set $\mathbf{B} \subset \mathcal{Y}$, the push-forward measure $f_{\#}\mu$ on $(Y,\mathcal{Y})$ is defined as
\begin{align}
\label{eqn_pushforward_definition}
(f_{\#}\mu)(\mathbf{B}) 
:= 
\mu\left(f^{-1}(\mathbf{B})\right),
\end{align}
\end{definition}
where $f^{-1}(\mathbf{B})$ denotes the pre-image of $\mathbf{B} \subset \mathcal{Y}$ in $\mathcal{X}$ and is defined as
\begin{align}
f^{-1}(\mathbf{B}) 
:=
\left\{
x \in \mathcal{X} \mid 
f(x) \in \mathcal{B}
\right\}.
\end{align}
\subsection{Preliminaries on Stereographic Projections}
To understand the development of the distance measure proposed for the frequency domain setting in this manuscript, we define the Riemann sphere to model the extended complex plane \cite{needham2023visual}. 
\begin{definition}
The Riemann sphere, denoted by $\mathfrak{R} \subset \mathbb{R}^{3}$ is a sphere centred at $\mathfrak{R}_{c} := \left(0,0,\frac{1}{2}\right)$ with unit diameter tangent at its south pole to $\mathbb{C}$ at the origin and its boundary is denoted by $\partial \mathfrak{R}$. That is,
\begin{align}
\label{eq:Riemann_sphere}
    \mathfrak{R}
    =
    \left\{
    (x,y,z) \in \mathbb{R}^{3} \mid x^{2} + y^{2} + \left(z-0.5 \right)^{2} = \left(0.5\right)^{2}
    \right\}.
\end{align}
\end{definition}
We can also express the $\mathfrak{R}$ in terms of spherical coordinates using the point $(x,y,z)=\left(0, 0, \frac{1}{2}\right)$ as its origin. The equivalent coordinates would then be $\left(\frac{1}{2},\theta,\varphi\right)$ where $\theta$ and $\varphi$ are the polar and azimuthal angles respectively. Then, we can alternatively represent $\mathfrak{R}$ as
\begin{equation}\label{eq:Riemann_spherical_coords}
    \mathfrak{R}
    =
    \left\{
    (r,\theta,\varphi)\in\mathbb{R}^{3} \mid  r = \frac{1}{2},\ \theta\in[0, \pi],\ \varphi\in[0, 2\pi]
    \right\}.
\end{equation}
We now define the stereographic projection of points onto the complex plane $\mathbb{C}$ from the Riemann sphere, with its north and south pole denoted by $\mathfrak{N}$ and $\mathfrak{S}$ respectively.
\begin{definition}
Let $R = (x, y, z) \in \partial \mathfrak{R} \backslash \{\mathfrak{N}\}$. Then, a line through $\mathfrak{N}$ and $R$ intersects $\mathbb{C}$ exactly at one point $\phi(R) \in \mathbb{C}$. That is, $\phi: \partial \mathfrak{R} \backslash \{\mathfrak{N}\} \mapsto \mathbb{C}$ and the association from $R \mapsto \phi(R)$ is called the stereographic projection.
\end{definition}
Given a point $R = (r_x, r_y, r_z) \in \partial \mathfrak{R} \backslash \mathfrak{N}$, the coordinates of the corresponding Stereographic projected point $c \in \mathbb{C}$ is given by
\begin{align}
\label{eqn_complexplane_cartesian_coords}
    c 
    = 
    \phi(R) 
    :=
    \left(
    \frac{r_x}{1 - r_z} 
    \right)
    + j 
    \left(
    \frac{r_y}{1 - r_z}
    \right).
\end{align}
We will also require the inverse of the stereographic projection. 
\begin{definition}
Let $\phi(R) \in \mathbb{C}$ denote a point on the complex plane. Then, the line from $\phi(R) \in \mathbb{C}$ to the $\mathfrak{N}$ intersects $\partial \mathfrak{R}$ exactly at one point $R = (x, y, z) \in \partial \mathfrak{R}$. That is, $\phi^{-1} : \mathbb{C} \rightarrow \partial \mathfrak{R}$ and the association from $\phi(R) \mapsto R$ is called the inverse of the stereographic projection. 
\end{definition}
Given a point $c \in \mathbb{C}$, the corresponding point $R := \phi^{-1}(c) = (r_x, r_y, r_z) \in \partial \mathfrak{R}$ due to the inverse of the stereographic projection will have the Cartesian coordinates:
\begin{align} \label{eqn_sphere_cartesian_coords}
(r_x, r_y, r_z) 
= 
\left( 
\frac{\mathrm{Re}(c)}{1 + \absval{c}^{2}}, 
\frac{\mathrm{Im}(c)}{1 + \absval{c}^{2}}, 
\frac{\absval{c}^{2}}{1 + \absval{c}^{2}} 
\right).
\end{align}
The following proposition will precisely characterize the distribution transformation under the inverse of the stereographic projection operation using results from \cite{Geometric_Measure_Book, Papoulis_Book}. 
\begin{proposition}
\label{proposition_projected_distribution}
(From \cite{Anton_MS_Thesis}) Let $P = \mathsf x + j \mathsf y $ be the random frequency response at a frequency $\omega \in \Omega$ and let the distribution characterizing the uncertainty of 
$P$ in $\mathbb{C}$ along the real and imaginary axes denoted by $\mathbb{P}_\mathsf{xy}(x,y)$ be known apriori. Then, the distribution $\mathbb{P}_{R}$ that characterizes the corresponding uncertainty on the Riemann sphere due to the stereographic projection mapping is given by
\begin{align}
\label{eq:projected_distribution}
\mathbb{P}_{R} 
&= 
\frac{r + r^{3}}{2} \mathbb{P}_{\mathsf{xy}}(x,y), \quad \text{where } r = \sqrt{x^2 + y^2}.
\end{align}
\end{proposition}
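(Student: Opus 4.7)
The plan is to view the inverse stereographic projection $\phi^{-1}:\mathbb{C}\to\partial\mathfrak{R}\setminus\{\mathfrak{N}\}$ given by \eqref{eqn_sphere_cartesian_coords} as a diffeomorphism onto its image and to compute the density of $\mathbb{P}_R=(\phi^{-1})_{\#}\mathbb{P}_{\mathsf{xy}}$, in the sense of Definition~\ref{define_pushforward}, by the standard change-of-variables formula. The simplification rests on exploiting rotational symmetry about the $z$-axis to separate the transformation into a radial piece and an angular piece.

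First I would switch to polar coordinates $(r,\phi)$ on $\mathbb{C}$ with $x=r\cos\phi$, $y=r\sin\phi$, so $dx\,dy = r\,dr\,d\phi$, and to the spherical coordinates $(\theta,\varphi)$ on $\mathfrak{R}$ introduced in \eqref{eq:Riemann_spherical_coords}. Since the first two Cartesian components in \eqref{eqn_sphere_cartesian_coords} share the common factor $(1+|c|^{2})^{-1}$, the map $\phi^{-1}$ is rotationally symmetric about the $z$-axis, so the azimuthal angle is preserved: $\varphi=\phi$. From the third component $r_z=r^{2}/(1+r^{2})$ together with the parametric identity $r_z=\tfrac{1}{2}(1+\cos\theta)$ valid for the diameter-$1$ sphere centred at $(0,0,\tfrac{1}{2})$, I would obtain the radial relation $\cos\theta=(r^{2}-1)/(r^{2}+1)$, equivalently $\tan(\theta/2)=1/r$.

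Differentiating this relation gives $|d\theta/dr|=2/(1+r^{2})$, so that $dr=\tfrac{1}{2}(1+r^{2})\,|d\theta|$ and
\begin{align*}
dx\,dy \;=\; r\,dr\,d\phi \;=\; \frac{r(1+r^{2})}{2}\,d\theta\,d\varphi \;=\; \frac{r+r^{3}}{2}\,d\theta\,d\varphi.
\end{align*}
Equating probability mass under the pushforward, $\mathbb{P}_R(\theta,\varphi)\,d\theta\,d\varphi=\mathbb{P}_{\mathsf{xy}}(x,y)\,dx\,dy$, then delivers \eqref{eq:projected_distribution}.

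The main subtlety is keeping the reference measure on the sphere straight: the density stated in the proposition is with respect to the product measure $d\theta\,d\varphi$ associated with the chart \eqref{eq:Riemann_spherical_coords}, rather than with respect to the induced surface-area measure (which would instead yield the more familiar Jacobian factor $(1+r^{2})^{-2}$). Once this convention is fixed, and the radial identity $\tan(\theta/2)=1/r$ is read off from \eqref{eqn_sphere_cartesian_coords}, the remaining steps are a routine change of variables of the type covered by the geometric measure theory and probability references cited in the statement.
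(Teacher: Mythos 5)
Your derivation is correct, and in fact the paper offers no proof to compare it against: Proposition~\ref{proposition_projected_distribution} is imported verbatim from the cited thesis, so your argument is a genuine addition rather than an alternative route. The chain of identities checks out: from \eqref{eqn_sphere_cartesian_coords} and the parametrisation $r_z=\tfrac{1}{2}(1+\cos\theta)$ of the unit-diameter sphere one gets $\cos\theta=(r^{2}-1)/(r^{2}+1)$, hence $\tan(\theta/2)=1/r$ and $|d\theta/dr|=2/(1+r^{2})$, which combined with $dx\,dy=r\,dr\,d\phi$ and the preservation of the azimuth yields exactly the factor $(r+r^{3})/2$ relative to $d\theta\,d\varphi$. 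You also correctly flag the one point on which the proposition is silent, namely that \eqref{eq:projected_distribution} is a density with respect to the coordinate measure $d\theta\,d\varphi$ of the chart \eqref{eq:Riemann_spherical_coords} and not the induced surface-area measure; this is the only reading under which the stated formula integrates to one. The single nitpick is in your final aside: the factor $(1+r^{2})^{-2}$ you mention is the area-contraction Jacobian of $\phi^{-1}$ itself (i.e.\ $dA_{\mathfrak{R}}=(1+r^{2})^{-2}\,dx\,dy$), so the pushforward density with respect to surface area would carry the reciprocal factor $(1+r^{2})^{2}$; this does not affect your argument, which never uses that convention.
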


\subsection{Preliminaries on Gap Metric}
Towards our distance measure definition on the time domain setting, we provide here some preliminary details. Particularly, we will start with some basics regarding the concepts needed to understand the gap metric. 
Every matrix $P(s) \in \mathrm{mat}(\mathbf{R}(s))$ has both a Right Co-prime Factorisation (RCF) as well as a Left Co-prime Factorisation (LCF) over the ring $\mathcal{P}(s)$. That is, $\forall P(s) \in \mathrm{mat}(\mathbf{R}(s))$, there exist $N, D, \Tilde{N}, \Tilde{D}, X, Y, \Tilde{X}, \Tilde{Y} \in \mathrm{mat}(\mathcal{P}(s)) \cap \mathcal{H}_{\infty}$ such that 
\begin{align}
\label{eqn_coprime_representation}
P
= 
N D^{-1} 
= 
\Tilde{D}^{-1}\Tilde{N},    
\end{align}
and the following \emph{Bezout's identity} holds for all $s \in \mathbb{C}_{\geq 0}$, 
\begin{align*}
X(s)N(s) + Y(s)D(s) = \Tilde{N}(s) \Tilde{X}(s) + \Tilde{D}(s) \Tilde{Y}(s) = I.
\end{align*} 
Further, the RCF is said to be \emph{normalized} if in addition it satisfies $N^{\star} N + D^{\star} D = I$. Analogous LCF results are available and are omitted here for the reason of being not used in this paper. Given $P(s) \in \mathrm{mat}(\mathbf{R}(s))$, its $\mathcal{H}_2$-graph is defined as
\begin{subequations}
\label{eqn_graph_p}
\begin{align}
    \mathcal{G}_{P} 
    &:= 
    \left\{ (u,y) \mid y = Pu \right\} \subseteq \mathcal{H}_{2} \times \mathcal{H}_{2} \\
    &=
    \underbrace{
    \begin{bmatrix}
    D \\ N    
    \end{bmatrix}}_{=: G}
    \mathcal{H}_{2} = \mathrm{Range}(G),
\end{align}
\end{subequations}
where the operator $G$ (henceforth referred to as the graph symbol) is unitary meaning that $G^{\star} G = I$. Note that $\mathcal{G}_{P}$ is a closed subspace of $\mathcal{H}_{2} \times \mathcal{H}_{2}$. The orthogonal projection onto $\mathcal{G}_{P}$ is denoted by $\Pi_{\mathcal{G}_{P}}$ and it is bounded. Let $G_{1}$ and $G_{2}$ denote the graph symbols of normalized RCFs of plants $P_{1}$ and $P_{2}$ respectively. The gap between the systems (interested authors are referred to \cite{georgiou1988computation, RenganathanProbabilisticGap} and the references therein) $P_1$ and $P_2$ can be defined as
\begin{align}
\label{eqn_gap_metric_definition}
\delta_{g}(P_1, P_2) = \left\| \Pi_{\mathcal{G}_1} - \Pi_{\mathcal{G}_2} \right\|.
\end{align} 
Given a plant $P$ and a stabilising controller $C$ for it, we denote the associated performance measure as $b_{P,C} \in (0,1)$ and is defined as 
\begin{align}
\label{eqn_bPC_definition}
b_{P,C}
=
\norm{
\begin{bmatrix}
P \\ I    
\end{bmatrix}
(I - CP)^{-1}
\begin{bmatrix}
-C & I    
\end{bmatrix}
}^{-1}_{\mathcal{H}_{\infty}}
\end{align}

\section{Problem Formulation in Frequency Domain} \label{sec:frequency_domain}
While our main aim is to come up with an appropriate frequency domain specific distance metric between stochastic dynamical systems in general, for the ease of exposition, we shall start the problem formulation by analysing simple single input single output (SISO) dynamical systems first. The exposition with multiple input multiple output (MIMO) systems is out of the scope of this manuscript and is being investigated as a part of our future ongoing research (though we believe that exposition should carry forward typically from SISO to MIMO). Let $\Omega = [0, \infty)$ denote the set of all frequencies. \\

Consider the setting where two stochastic SISO LTI dynamical systems, each with $n \in \mathbb{N}$ states, $m \in \mathbb{N}$ control inputs, and $l \in \mathbb{N}$ outputs, in the space of $\mathbf{R}\mathcal{L}_{\infty}$ are given and we denote their transfer functions as $P_{1}(s)$ and $P_{2}(s)$ respectively. Let $\theta_{i} \sim \mathbf{f}_{\theta_{i}} = \mathcal{N}(\mu_{\theta_{i}}, \Sigma_{\theta_{i}})$ denote the random parameter affecting the system $i \in \{1, 2\}$ with $p \leq (n^{2} + mn + ln)$. Then, the transfer function of the $i$\textsuperscript{th} stochastic system for a fixed $s \in \mathbb{C}$ can be written as
\begin{align}
\label{eqn_stochastic_transfer_function}
P_{i}(\theta_{i}; s) 
=
C_{i}(\theta_{i}) (sI - A_{i}(\theta_{i}))^{-1} B_{i}(\theta_{i}),
\end{align}
where the matrices $A_{i}(\theta_{i}) \in \mathbb{R}^{n \times n}, B_{i}(\theta_{i}) \in \mathbb{R}^{n \times m}, C_{i}(\theta_{i}) \in \mathbb{R}^{l \times n}$. It is evident from \eqref{eqn_stochastic_transfer_function} that the randomness in the parameter $\theta_{i} \sim \mathbf{f}_{\theta_{i}} = \mathcal{N}(\mu_{\theta_{i}}, \Sigma_{\theta_{i}})$ manifests itself to render a random transfer function $P_{i}(\theta_{i}; s)$. For brevity of notation, we will write $P_{i}(\theta_{i}; s)$ simply as $P_{i}(s)$. 

\begin{assumption}
The frequency responses of both the systems at any given frequency $\omega \in \Omega$ denoted by $P_{1}(j \omega)$ and $P_{2}(j \omega)$ are random and governed by distributions $\mathbb{P}_{P_{1}(\omega)}$ and $\mathbb{P}_{P_{2}(\omega)}$ respectively. That is, $P_{1}(j \omega) \sim \mathbb{P}_{P_{1}(\omega)}$ and $P_{2}(j \omega) \sim \mathbb{P}_{P_{2}(\omega)}$.
\end{assumption}
Now, $\forall \omega \in \Omega$, let us define the measurable map $\mathcal{E}_{\omega}: \mathbf{R}\mathcal{L}_{\infty} \rightarrow \mathbb{C}$ such that $\mathcal{E}_{\omega}(P(s)) = P(j\omega)$. Then, the distribution $\mathbb{P}_{P_{i}(\omega)}$ governing the uncertainty of system $i$ on $\mathbb{C}$ with $i \in \{1, 2\}$ is related to the distribution $\mathbf{f}_{\theta_{i}}$ of the random parameter $\theta_{i}$ as
\begin{align}
\label{eqn_distribution_pushfwd_complexplane}
\mathbb{P}_{P_{i}(\omega)}
:=
\left(
\Phi^{\mathbb{C}}_{i}
\right)_{\#}
\mathbf{f}_{\theta_{i}}, 
\quad
\text{where}
\quad
\Phi^{\mathbb{C}}_{i}
=
\mathcal{E}_{\omega} \circ P_i,
\end{align}
and for any Borel measurable set $\mathbf{B} \subset \mathbb{C}$, the above push-forward measure satisfies
\begin{align}
\label{eqn_pushforward_measure_frequency_domain}
\left(
\Phi^{\mathbb{C}}_{i}
\right)_{\#} 
\mathbf{f}_{\theta_{i}}(\mathbf{B}) 
 =
 \mathbb{P}( \Phi^{\mathbb{C}}_{i} \in \mathbf{B})
 =
 \mathbf{f}_{\theta_{i}}
 \left(
 \left(
 \Phi^{\mathbb{C}}_{i}
 \right)^{-1}(\mathbf{B})
 \right).
\end{align}


\begin{assumption}
The support sets of the distributions $\mathbb{P}_{P_{1}(\omega)}$ and $\mathbb{P}_{P_{2}(\omega)}$ denoted by $\mathcal{S}_{P_{1}}(\omega)$ and $\mathcal{S}_{P_{2}}(\omega)$ respectively are both convex and compact.   
\end{assumption}
\begin{remark}
Compactness is essential to exclude the case of $\infty$ being included in the support set. Notice that both the distributions and their corresponding support sets of both the plants are frequency-dependent. This modelling assumption makes sense as one usually performs system identification procedure to identify plant models for a system by exciting the system at all frequencies using appropriate input signals. However, notice that we do not make any explicit assumption on the support sets $\mathcal{S}_{P_{1}}(\omega)$ and $\mathcal{S}_{P_{2}}(\omega)$ being disjoint from each other.
\end{remark}
\begin{assumption}
For every $\omega \in \Omega$, a nominal plant model for both the systems $P_1$ and $P_2$ denoted by $\bar{P}_{1}(j\omega)$ and $\bar{P}_{2}(j\omega)$ respectively are known apriori.     
\end{assumption}
From now on, we shall drop the $(j \omega)$ argument for convenience with the understanding that the formulation corresponds to the quantities at a particular frequency $\omega$ unless otherwise specified. For both the stochastic systems enumerated by $\ell = 1,2$, we can infer its projected support set as
\begin{align}
\mathcal{R}_{P_{\ell}} 
\!=\! 
\phi^{-1}(\mathcal{S}_{P_{\ell}}) 
\!=\!
\left\{\!
R \in \partial \mathfrak{R} 
\mid
R = \phi^{-1}(r), \forall r \in \mathcal{S}_{P_{\ell}} \!
\right\}.
\end{align}

\subsection{Stereographic Projection of Distribution}
To find the distance between the random plants, we first need to understand how their corresponding distributions get transformed under the stereographic projection operation. 
That is, we need to characterise how the distributions $\mathbb{P}_{P_{\ell}(\omega)}$ of system $\ell = \{1, 2\}$ will get transformed under the inverse of the stereographic projection operation. We recall Proposition \ref{proposition_projected_distribution} and use 
\eqref{eq:projected_distribution} to obtain the corresponding projected distribution $\mathbb{P}_{R_{\ell}(\omega)}$ living on the Riemann sphere, for each system $\ell = 1,2$ due to the inverse of the stereographic projection mapping. An illustration is provided in Figure \ref{fig:2stereos}.
\begin{figure}[h]
    \centering
    \includegraphics[width=0.5\linewidth]{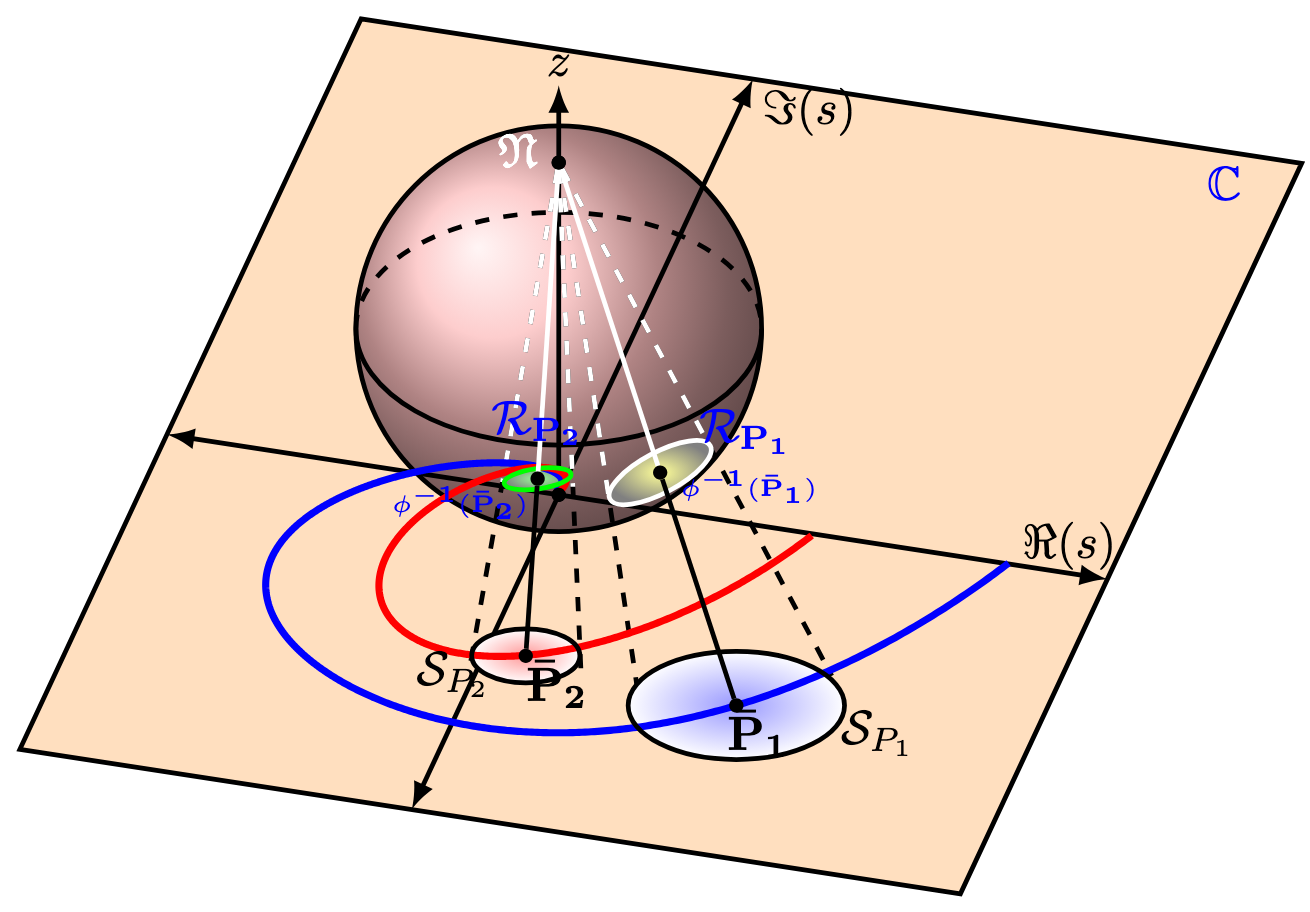}
    \caption{The Riemann sphere tangent to $\mathbb{C}$ is shown in shaded dark brown. An instance of the two stochastic systems $P_1$ and $P_2$ are depicted using their Nyquist plot in blue \& red curves respectively. The distributions $\mathbb{P}_{P_{1}}$ and $\mathbb{P}_{P_{2}}$ characterizing the uncertainties of $P_1$ and $P_2$ at a frequency are shown as shaded blue \& red colours with compact support sets $\mathcal{S}_{P_{1}}$ and $\mathcal{S}_{P_{2}}$ in $\mathbb{C}$ respectively. The corresponding inverse stereographic projections of the support sets onto the Riemann sphere are shown as sets $\mathcal{R}_{P_{1}}$ and $\mathcal{R}_{P_{2}}$ respectively. The known nominal models $\bar{P}_{1}, \bar{P}_{2}$ along with their projected counterparts on the Riemann sphere $\phi^{-1}(\bar{P}_{1}), \phi^{-1}(\bar{P}_{2})$ are also shown here.}
    \label{fig:2stereos}
\end{figure}
Equivalently, the distribution $\mathbb{P}_{R_{i}(\omega)}$ governing the uncertainty of system $i$ with $i \in \{1, 2\}$ on the boundary of the Riemann sphere is related to the distribution $\mathbf{f}_{\theta_{i}}$ of the random parameter $\theta_{i}$ through the push-forward relation as
\begin{align}
\label{eqn_distribution_pushfwd_Riemannsphere}
\mathbb{P}_{R_{i}(\omega)}
:=
\left(
\Phi^{\mathfrak{R}}_{i}
\right)_{\#} 
\mathbf{f}_{\theta_{i}},
\quad 
\text{where}
\quad 
\Phi^{\mathfrak{R}}_{i} = \phi^{-1} \circ \Phi^{\mathbb{C}}_{i},
\end{align}
and for any Borel measurable set $\mathbf{B} \subseteq \partial \mathfrak{R}$, the above push-forward measure satisfies
\begin{align}
\label{eqn_pushforward_measure_RiemannSphere}
\left(
\Phi^{\mathfrak{R}}_{i}
\right)_{\#} 
\mathbf{f}_{\theta_{i}}(\mathbf{B}) 
=
\mathbb{P}( \Phi^{\mathfrak{R}}_{i} \in \mathbf{B})
=
\mathbf{f}_{\theta_{i}}
\left(
\left(
\Phi^{\mathfrak{R}}_{i}
\right)^{-1}(\mathbf{B})
\right).
\end{align}
\subsection{Support Distance Between Systems $P_1$ and $P_2$}
In order to understand how far two stochastic dynamical systems $P_1$ and $P_2$ are in the frequency domain, first we analyse the distance between their support sets $\mathcal{S}_{P_{1}}(\omega)$ and $\mathcal{S}_{P_{2}}(\omega)$, where the respective system realizations can occur for every frequency $\omega \in \Omega$. To this end, we define the support distance between $P_1$ and $P_2$ in $\mathbb{C}$.

\begin{definition}
Given two systems $P_{\ell}$ with $\ell = \{1, 2\}$, whose uncertainty in $\mathbb{C}$ are characterized by distributions defined on support sets $\mathcal{S}_{P_{\ell}}(\omega)$ for every frequency $\omega \in \Omega$, we define the support distance between the systems as the worst-case pointwise-in-frequency distance between points in their support sets $\mathcal{S}_{P_{\ell}}(\omega)$. That is, 
\begin{align}
\label{eqn_distance_btw_s}
\mathrm{d^{\mathbb{C}}_{sup}}(P_1, P_2)
&:=
\sup_{\omega \in \Omega} \, 
\sup_{
\substack{
s_1 \in \mathcal{S}_{P_{1}}(\omega) \\ 
s_2 \in \mathcal{S}_{P_{2}}(\omega)
}
}
\norm{s_1 - s_2}.
\end{align}
\end{definition}
Since in the complex plane, the Euclidean distance between points can be more than $1$, we would like to infer the support distance in the Riemann sphere where distance shall never exceed the value of $1$. Hence, a similar definition between the support sets of distributions living in the Riemann sphere due to the inverse of the stereographic projection operation can also be defined. Since in that case the distributions will live on the Riemann sphere, we need to use the appropriate metric to facilitate the Wasserstein distance computation having the optimal transport perspective taking into perspective the curvature of the underlying manifold. We state a basic result from the differential geometry \cite{do2016differential}. 
\begin{proposition}
Given two points $r_1 \in \mathcal{R}_{P_{1}}(\omega)$, and $r_2 \in \mathcal{R}_{P_{2}}(\omega)$, the geodesic distance between them, denoted by $\mathrm{d_{geo}}(r_1, r_2)$ and the chordal distance between them, denoted by $\mathrm{d_{chord}}(r_1, r_2)$ are respectively  given by
\begin{align}
\mathrm{d_{geo}}(r_1, r_2)
&:=
\frac{1}{2} \cos^{-1}\left(4 \langle (r_1 - \mathfrak{R}_{c}), (r_2 - \mathfrak{R}_{c}) \rangle \right), \label{eqn_geodesic_distance_definition}   \\
\mathrm{d_{chord}}(r_1, r_2)
&:=
\norm{r_1 - r_2} 
=
\sin(\mathrm{d_{geo}}(r_1, r_2)), \quad \text{and} \label{eqn_chordal_distance_definition}   \\
\mathrm{d_{chord}}(r_1, r_2)
&\leq 
\mathrm{d_{geo}}(r_1, r_2). \label{eqn_chordal_geo_relation}
\end{align}
\end{proposition}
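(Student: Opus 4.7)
The plan is to leverage the explicit geometry described in \eqref{eq:Riemann_sphere}: the boundary $\partial\mathfrak{R}$ is a round $2$-sphere of radius $\tfrac{1}{2}$ centred at $\mathfrak{R}_{c}$, so every $r \in \partial\mathfrak{R}$ satisfies $\norm{r - \mathfrak{R}_{c}} = \tfrac{1}{2}$. With this single observation, all three claims reduce to elementary spherical trigonometry applied to the central angle $\alpha \in [0,\pi]$ between the radial vectors $r_1 - \mathfrak{R}_{c}$ and $r_2 - \mathfrak{R}_{c}$.

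First I would prove \eqref{eqn_geodesic_distance_definition}. On a sphere of radius $R$, the geodesic (great-circle) distance between two points equals $R\alpha$, where $\alpha$ is the central angle they subtend. Applying the standard identity
\begin{align*}
\cos\alpha \;=\; \frac{\langle r_1-\mathfrak{R}_{c},\, r_2-\mathfrak{R}_{c}\rangle}{\norm{r_1-\mathfrak{R}_{c}}\,\norm{r_2-\mathfrak{R}_{c}}}
\end{align*}
and substituting $\norm{r_i-\mathfrak{R}_{c}}=\tfrac{1}{2}$ yields $\cos\alpha = 4\langle r_1-\mathfrak{R}_{c},\, r_2-\mathfrak{R}_{c}\rangle$. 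Multiplying $\alpha$ by $R=\tfrac{1}{2}$ then reproduces \eqref{eqn_geodesic_distance_definition} exactly; this is also precisely where the coefficient $4$ inside the arccosine originates.

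Next I would establish \eqref{eqn_chordal_distance_definition} by expanding the squared Euclidean norm about the common centre $\mathfrak{R}_{c}$. Using $\norm{r_i-\mathfrak{R}_{c}}^{2}=\tfrac{1}{4}$ and the half-angle identity, one gets
\begin{align*}
\norm{r_1-r_2}^{2}
&= \tfrac{1}{2} - 2\langle r_1-\mathfrak{R}_{c},\, r_2-\mathfrak{R}_{c}\rangle \\
&= \tfrac{1}{2}\bigl(1-\cos\alpha\bigr) \;=\; \sin^{2}(\alpha/2).
\end{align*}
Taking the non-negative square root (valid because $\alpha/2 \in [0,\pi/2]$, on which $\sin$ is non-negative) gives $\norm{r_1-r_2} = \sin(\alpha/2) = \sin(\mathrm{d_{geo}}(r_1,r_2))$, as required.

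Finally, \eqref{eqn_chordal_geo_relation} follows immediately from the elementary bound $\sin x \leq x$ for $x \geq 0$, applied with $x = \mathrm{d_{geo}}(r_1,r_2) \in [0,\pi/2]$. There is no real obstacle here; the only step that needs care is the bookkeeping of the factor $\tfrac{1}{4}$ arising from $\norm{r_i-\mathfrak{R}_{c}}^{2}$, since it simultaneously produces the coefficient inside the arccosine and drives the half-angle identity that ties the chordal distance to the sine of the geodesic distance.
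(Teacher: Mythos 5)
Your proposal is correct and follows essentially the same route as the paper: identify the central angle $\alpha$ at $\mathfrak{R}_{c}$, use $\mathrm{d_{geo}} = R\alpha$ with $R = \tfrac{1}{2}$ (which produces the factor $4$ in the arccosine), obtain the chord length $2R\sin(\alpha/2) = \sin(\mathrm{d_{geo}})$, and conclude with $\sin x \leq x$ for $x \geq 0$. The only cosmetic difference is that you derive the chord formula explicitly via the law of cosines and the half-angle identity, whereas the paper quotes $\mathrm{d_{chord}} = 2R\sin(\theta/2)$ directly.
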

\begin{proof}
For the Riemann sphere centred at $\mathfrak{R}_{c} = (0,0,0.5)$ and of radius $R = 0.5$, let $\theta$ be the central angle measured at $\mathfrak{R}_{c}$ between the points $r_1 \in \mathcal{R}_{P_{1}}(\omega)$ and $r_2 \in \mathcal{R}_{P_{2}}(\omega)$. Then, $\mathrm{d_{geo}}(r_1, r_2) = R \theta$ which evaluates to \eqref{eqn_geodesic_distance_definition}. Note that $\mathrm{d_{geo}}(r_1, r_2) = R \theta \iff \theta = \frac{\mathrm{d_{geo}}(r_1, r_2)}{R} = 2 \mathrm{d_{geo}}(r_1, r_2)$. Similarly, $\mathrm{d_{chord}}(r_1, r_2) = 2R \sin\left(\frac{\theta}{2}\right)$ which then evaluates to \eqref{eqn_chordal_distance_definition}. Since $\sin(x) \leq x, \forall x \geq 0$, the result \eqref{eqn_chordal_geo_relation} follows.
\end{proof}
With this chordal metric, we can now define the support distance on the Riemann sphere between the systems $P_{\ell}$ for $\ell = \{1, 2\}$.
\begin{definition}
Given two systems $P_{\ell}$ with $\ell = \{1, 2\}$, whose uncertainty in $\mathbb{C}$ are characterized by distributions defined on support sets $\mathcal{S}_{P_{\ell}}(\omega)$ for every frequency $\omega \in \Omega$, the support distance in the Riemann sphere between the systems is defined as the worst-case pointwise-in-frequency chordal distance between points in their projected support sets $\mathcal{R}_{P_{\ell}}(\omega)$. That is,
\begin{align}
\label{eqn_distance_btw_r}
\mathrm{d^{\mathfrak{R}}_{sup}}(P_1, P_2)
&:=
\sup_{\omega \in \Omega} \, 
\underbrace{
\sup_{
\substack{
r_1 \in \mathcal{R}_{P_{1}}(\omega) \\ 
r_2 \in \mathcal{R}_{P_{2}}(\omega)
}
}
\mathrm{d_{chord}}(r_1, r_2)}_{:= \mathrm{d^{\mathfrak{R}}_{sup}}(P_1, P_2, \omega)}.
\end{align}
\end{definition}
Clearly, both the support distances $\mathrm{d^{\mathbb{C}}_{sup}}(P_1, P_2)$ and $\mathrm{d^{\mathfrak{R}}_{sup}}(P_1, P_2)$ given by \eqref{eqn_distance_btw_s} and \eqref{eqn_distance_btw_r} do not explicitly take into account the information of the distributions $\mathbb{P}_{P_{1}(\omega)}$ and $\mathbb{P}_{P_{2}(\omega)}$ and rather is based only on their support sets $\mathcal{S}_{P_{1}}(\omega)$ and $\mathcal{S}_{P_{2}}(\omega)$ respectively for every frequency $\omega \in \Omega$. That is, both the support distances $\mathrm{d^{\mathbb{C}}_{sup}}(P_1, P_2)$ and $\mathrm{d^{\mathfrak{R}}_{sup}}(P_1, P_2)$ given by \eqref{eqn_distance_btw_s} and \eqref{eqn_distance_btw_r} do not take into account the frequency with which each plant models occur within their respective support sets meaning that it just encodes the physical separation between the plant models getting realised in the respective support sets. This shortcoming can be addressed by defining a chordal metric induced Wasserstein distribution between distributions living in the $\partial \mathfrak{R}$.

\subsection{Type-$q$ Distance Between Systems $P_1$ and $P_2$}
We address the above shortcoming by proposing the type-$q$ Wasserstein distance between systems by taking into account their distributions $\mathbb{P}_{P_{1}(\omega)}$, $\mathbb{P}_{P_{2}(\omega)}$ for every frequency $\omega \in \Omega$. We denote the corresponding set of all possible joint distribution by $\Pi_{\omega} := \Pi\left(\mathbb{P}_{R_{1}(\omega)}, \mathbb{P}_{R_{2}(\omega)}\right)$.

\begin{definition}
Given $q \geq 1$ and two systems $P_1$ and $P_2$ whose uncertainties in $\mathbb{C}$ are characterized by distributions $\mathbb{P}_{P_{1}(\omega)}$ and $\mathbb{P}_{P_{2}(\omega)}$ defined on support sets $\mathcal{S}_{P_{1}}(\omega)$ and $\mathcal{S}_{P_{2}}(\omega)$ respectively $\forall \omega \in \Omega$, we define the type-$q$ distance between the systems as the worst-case point-wise-in-frequency type-$q$ chordal metric induced Wasserstein distance between their projected distributions $\mathbb{P}_{R_{1}(\omega)}$ and $\mathbb{P}_{R_{2}(\omega)}$ defined on support sets $\mathcal{R}_{P_{1}}(\omega)$ and $\mathcal{R}_{P_{2}}(\omega)$ respectively. That is,    
\begin{align}
\label{eqn_distance_btw_p1_p2}
&\mathrm{d}_{q}\left(P_1, P_2\right) \nonumber \\
&:=
\sup_{\omega \in \Omega} 
W^{q}_{q}\left(\mathbb{P}_{R_{1}(\omega)}, \mathbb{P}_{R_{2}(\omega)}\right) \nonumber \\
&=
\sup_{\omega \in \Omega} \,
\underset{\pi_{\omega} \in \Pi_{\omega}}{\inf}  \int_{\mathcal{R}_{P_{1}}(\omega) \times \mathcal{R}_{P_{2}}(\omega)} 
\mathrm{d_{chord}}(r_1, r_2)^{q} \, 
\pi_{\omega}(d r_{1}, d r_{2}).
\end{align}
\end{definition}
\begin{remark}
One can use other variations to define the distance between systems $P_1$ and $P_2$ using other distance measures such as total variation measure, Hellinger measure, $\chi^{2}$ measure to measure their point-wise-in-frequency distance between the distributions $\mathbb{P}_{R_{1}(\omega)}$ and $\mathbb{P}_{R_{2}(\omega)}$ at every frequency $\omega \in \Omega$. Each comes with its own merits and drawbacks. We will stick to the Wasserstein distance based definition for this manuscript.     
\end{remark}
\begin{remark}
Note that the $\nu$-gap metric from the robust control literature is defined using the chordal distance between points on the Riemann sphere obtained through the inverse stereographic projection, provided that the two systems satisfy certain winding number constraints. It is certainly possible to take into account the Riemann manifold and use the geodesic distance as the transport cost while computing the Wasserstein distance between distributions on the Riemann sphere. In such a case, the optimal transport plan shall happen along the boundary of the Riemann sphere and as a result the geodesic metric $\mathrm{d_{geo}}(r_1, r_2)$ and hence the distance between the plants can exceed unity. This will cause further issues when a connection between the distance between plants and the associated performance measure $b_{P,C}$ given by \eqref{eqn_bPC_definition} is made for analysing the probabilistic robustness, as $b_{P,C}$ does not exceed the value of $1$. This does not mean that the geodesic metric $\mathrm{d_{geo}}(r_1, r_2)$ is a wrong distance metric choice. Rather, it just means that the corresponding robustness measure that is similar to $b_{P,C}$ and that can handle distance between plants greater than $1$ is yet to be developed, and hence it is just a limitation due to the missing theory. Therefore, it is preferable to use the chordal distance in the subsequent theoretical development to reflect the normalized distance value in $[0, 1]$, facilitating future developments regarding probabilistic robust performance results using the performance measure $b_{P,C}$. From the optimal transport perspective, the transport plan will happen through the interior of the Riemann sphere (which is perfectly fine) when the transport cost is computed in terms of the chordal distance metric. 
\end{remark}
\subsubsection{Upper Bound on $\mathrm{d}_{q}\left(P_1, P_2\right)$}
Given any support set, it is possible to define an uniform distribution over it. Using this simple observation, the connection between the support distance and the proposed type-$q$ distance between the systems $P_1$ and $P_2$ is established in the following theorem.
\begin{theorem} \label{theorem_support_distance_upper_bound}
For $q \geq 1$, let $\mathbb{P}_{R_{\ell}(\omega)} := \mathcal{U}(\mathcal{R}_{P_{\ell}}(\omega))$ be the uniform distribution defined over the support of the projected uncertainty for each system $\ell = \{1, 2\}$. Then,
\begin{align}
\label{eqn_theorem_support_distance_upper_bound}
\mathrm{d}_{q}(P_1, P_2) 
\leq 
\mathrm{d^{\mathfrak{R}}_{sup}}(P_1, P_2)^{q}.
\end{align}
\end{theorem}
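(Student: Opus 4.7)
The plan is to establish the bound by the simplest possible coupling argument: the transport cost in the Wasserstein integral is dominated pointwise on the joint support by the worst-case chordal distance, and this pointwise domination lifts through integration, infimum, and supremum without needing any particular structure on the coupling.

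First I would fix an arbitrary frequency $\omega \in \Omega$ and an arbitrary coupling $\pi_{\omega} \in \Pi_{\omega}$ with marginals $\mathbb{P}_{R_{1}(\omega)}$ and $\mathbb{P}_{R_{2}(\omega)}$. Since the marginals are supported on $\mathcal{R}_{P_{1}}(\omega)$ and $\mathcal{R}_{P_{2}}(\omega)$ respectively, the coupling $\pi_{\omega}$ is itself supported inside the Cartesian product $\mathcal{R}_{P_{1}}(\omega) \times \mathcal{R}_{P_{2}}(\omega)$. Consequently, for $\pi_{\omega}$-almost every $(r_{1}, r_{2})$, the definition of the pointwise-in-frequency support distance in \eqref{eqn_distance_btw_r} gives
\begin{align*}
\mathrm{d_{chord}}(r_1, r_2) \leq \mathrm{d^{\mathfrak{R}}_{sup}}(P_1, P_2, \omega) \leq \mathrm{d^{\mathfrak{R}}_{sup}}(P_1, P_2).
\end{align*}
Raising to the power $q \geq 1$ preserves this inequality since the chordal distance is nonnegative.

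Next I would integrate this pointwise bound against $\pi_{\omega}$. Because $\pi_{\omega}$ is a probability measure on $\mathcal{R}_{P_{1}}(\omega) \times \mathcal{R}_{P_{2}}(\omega)$, the right-hand side constant integrates to $\mathrm{d^{\mathfrak{R}}_{sup}}(P_1, P_2)^{q}$, yielding
\begin{align*}
\int_{\mathcal{R}_{P_{1}}(\omega) \times \mathcal{R}_{P_{2}}(\omega)} \mathrm{d_{chord}}(r_1, r_2)^{q} \, \pi_{\omega}(d r_{1}, d r_{2}) \leq \mathrm{d^{\mathfrak{R}}_{sup}}(P_1, P_2)^{q}.
\end{align*}
The right-hand side is independent of $\pi_{\omega}$, so taking the infimum over $\pi_{\omega} \in \Pi_{\omega}$ on the left preserves the inequality, and it also remains valid after taking the supremum over $\omega \in \Omega$ on both sides, which is exactly \eqref{eqn_theorem_support_distance_upper_bound}.

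There is essentially no hard step here: the argument does not exploit the assumed uniformity of $\mathbb{P}_{R_{\ell}(\omega)} = \mathcal{U}(\mathcal{R}_{P_{\ell}}(\omega))$ at all, only the fact that these distributions are supported on $\mathcal{R}_{P_{\ell}}(\omega)$. The uniformity hypothesis is presumably kept for consistency with the standing modelling choice elsewhere in the section and to make the worst-case interpretation natural, rather than being required by the proof. The only mild subtlety worth pointing out is that one should invoke compactness of the support sets (from the standing assumption) to ensure that the inner supremum defining $\mathrm{d^{\mathfrak{R}}_{sup}}(P_1, P_2, \omega)$ is attained and finite, so that the pointwise bound is genuinely a finite constant before integration.
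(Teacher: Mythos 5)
Your proof is correct and follows essentially the same route as the paper's: bound the integrand pointwise by the support distance, integrate against an arbitrary coupling, then take the infimum over couplings and the supremum over frequencies. Your side observation that the uniformity hypothesis is never actually used --- only the fact that the marginals are supported on $\mathcal{R}_{P_{\ell}}(\omega)$ --- is accurate and slightly sharpens the statement, but does not change the argument.
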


\begin{proof}
Given $q \geq 1$, fix any $ \omega \in \Omega $. Let $\mu := \mathcal{U}(\mathcal{R}_{P_{1}}(\omega))$ and $\nu := \mathcal{U}(\mathcal{R}_{P_{2}}(\omega))$ be the uniform distributions over the respective compact support sets. Then, the chordal metric induced type-$q$ Wasserstein distance using \eqref{eqn_distance_btw_p1_p2} is given by
\begin{align*}
W_q^q(\mu, \nu) = \inf_{\pi \in \Pi(\mu, \nu)} \int_{\mathcal{R}_{P_{1}}(\omega) \times \mathcal{R}_{P_{2}}(\omega)} \! \! \! \! \! \mathrm{d_{chord}}(r_1, r_2)^q \, d\pi(r_1, r_2).
\end{align*}
Given that the supports of $\mu$ and $\nu$ are $\mathcal{R}_{P_{1}}(\omega)$, and $\mathcal{R}_{P_{2}}(\omega)$ respectively, the support distance between $\mathcal{R}_{P_{1}}(\omega)$, and $\mathcal{R}_{P_{2}}(\omega)$ denoted by $\mathrm{d^{\mathfrak{R}}_{sup}}(P_1, P_2, \omega)$ is given by \eqref{eqn_distance_btw_r}. Then, for any joint distribution $\pi \in \Pi(\mu, \nu)$, we see that 
\begin{align*}
\int_{\mathcal{R}_{P_{1}}(\omega) \times \mathcal{R}_{P_{2}}(\omega)} \! \! \mathrm{d_{chord}}(r_1, r_2)^{q} \, d\pi(r_1, r_2) 
\leq 
\mathrm{d^{\mathfrak{R}}_{sup}}(P_1, P_2, \omega)^{q}.
\end{align*}
Taking the infimum over $\pi \in \Pi(\mu, \nu)$ and using the fact that $q \geq 1$, we see that
\begin{align*}
W_q^q(\mu, \nu) 
\leq 
\mathrm{d^{\mathfrak{R}}_{sup}}(P_1, P_2, \omega)^{q}.
\end{align*}
Taking supremum over all $\omega \in \Omega$, we get \eqref{eqn_theorem_support_distance_upper_bound} and the proof is complete.
\end{proof}
\subsubsection{Lower Bound on $\mathrm{d}_{q}\left(P_1, P_2\right)$}
Having obtained an upper bound for the distance metric $\mathrm{d}_{q}(P_1, P_2)$ in  Theorem \ref{theorem_support_distance_upper_bound}, we now proceed below to get a lower bound using triangle inequality based arguments. We will leverage the nominal distance and the expected deviation of the random plant instances of each systems from their respective nominal models to arrive at a lower bound for the proposed distance measure. 
\begin{theorem}
\label{theorem_triangle_lower_bound}
Suppose $\forall \omega \in \Omega$, the nominal frequency response of the systems $P_{\ell}$ for $\ell = \{1, 2\}$ denoted by $\overline{P}_{\ell}(j\omega) \in \mathbb{C}$ be known apriori. Let $\overline{R}_{\ell}(\omega) = \varphi^{-1}(\overline{P}_{\ell}(j\omega))$ denote their corresponding inverse stereographic projections onto the Riemann sphere. Further, let $\mathbb{P}_{R_{\ell}(\omega)}$ be the known probability distribution governing the projected uncertainty of $P_{\ell}(j \omega)$ in the Riemann sphere. Then, 
\begin{align}
\label{eqn_theorem_triangle_lower_bound}    
\mathrm{d}_{q}(P_1, P_2) 
&\geq
\sup_{\omega \in \Omega}
\inf_{\pi_{\omega} \in \Pi_{\omega}}
\Delta_{\mathrm{dev}}(\omega), 
\end{align}
where the deviations from the nominal response at frequency $\omega \in \Omega$ is given by 
\begin{subequations}
\begin{align}
\Delta_{\mathrm{dev}}(\omega)
&=
\left( 
\mathrm{d_{chord}}(\overline{R}_{1}(\omega), \overline{R}_{2}(\omega))\! - \!
\mathbb{E} 
\left[
\Delta_{\mathrm{nom}}(\omega) 
\right]
\right)^{q}_{+} \\
\Delta_{\mathrm{nom}}(\omega)
&= 
\sum^{2}_{\ell = 1}
\mathrm{d_{chord}}(r_{\ell}(\omega), \overline{R}_{\ell}(\omega)) \label{eqn_deltaNom_definition}.
\end{align}
\end{subequations}
\end{theorem}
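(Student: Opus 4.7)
The plan is to exploit the fact that the chordal distance in \eqref{eqn_chordal_distance_definition} equals the Euclidean distance $\|r_1-r_2\|$ on $\partial\mathfrak{R}\subset\mathbb{R}^3$, so it satisfies the triangle inequality. First I would fix an arbitrary frequency $\omega\in\Omega$ and an arbitrary coupling $\pi_\omega\in\Pi_\omega$ of the projected marginals $\mathbb{P}_{R_1(\omega)}$ and $\mathbb{P}_{R_2(\omega)}$. Applying the triangle inequality through the two nominal projections $\overline{R}_1(\omega)$ and $\overline{R}_2(\omega)$ gives, for every $(r_1,r_2)\in\mathcal{R}_{P_1}(\omega)\times\mathcal{R}_{P_2}(\omega)$,
\begin{align*}
\mathrm{d_{chord}}(r_1,r_2) \;\geq\; \mathrm{d_{chord}}(\overline{R}_1,\overline{R}_2) \;-\; \mathrm{d_{chord}}(r_1,\overline{R}_1) \;-\; \mathrm{d_{chord}}(r_2,\overline{R}_2),
\end{align*}
and since the chordal distance is non-negative, the right-hand side may be replaced by its positive part, so $\mathrm{d_{chord}}(r_1,r_2)\geq(\mathrm{d_{chord}}(\overline{R}_1,\overline{R}_2)-\Delta_{\mathrm{nom}}(\omega))_+$ using the definition in \eqref{eqn_deltaNom_definition}. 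Raising both sides to the power $q\geq 1$ preserves the inequality because $t\mapsto t^q$ is monotone on $[0,\infty)$.

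Next, I would integrate over $\pi_\omega$ and use Jensen's inequality. The function $\phi(x):=(c-x)_+^{\,q}$ with $c=\mathrm{d_{chord}}(\overline{R}_1(\omega),\overline{R}_2(\omega))$ is convex on $\mathbb{R}$: $(c-x)_+$ is the maximum of the two affine functions $0$ and $c-x$, and composing with the convex non-decreasing map $t\mapsto t^q$ (for $q\geq 1$, $t\geq 0$) preserves convexity. Applying Jensen to the random variable $\Delta_{\mathrm{nom}}(\omega)$ under $\pi_\omega$ yields
\begin{align*}
\int_{\mathcal{R}_{P_1}\times\mathcal{R}_{P_2}} \mathrm{d_{chord}}(r_1,r_2)^q \, \pi_\omega(dr_1,dr_2)
\;\geq\; \mathbb{E}_{\pi_\omega}\!\left[(c-\Delta_{\mathrm{nom}}(\omega))_+^{\,q}\right]
\;\geq\; (c-\mathbb{E}_{\pi_\omega}[\Delta_{\mathrm{nom}}(\omega)])_+^{\,q}
\;=\; \Delta_{\mathrm{dev}}(\omega).
\end{align*}
Note that $\Delta_{\mathrm{nom}}(\omega)$ is a sum of two functions each depending on only one of the arguments, so $\mathbb{E}_{\pi_\omega}[\Delta_{\mathrm{nom}}(\omega)]$ is determined entirely by the marginals and is independent of the coupling $\pi_\omega$; this is why taking the infimum over $\pi_\omega$ on the right-hand side of \eqref{eqn_theorem_triangle_lower_bound} is well-posed.

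Finally, since the displayed bound holds for every $\pi_\omega\in\Pi_\omega$, taking the infimum over couplings on the left gives $W_q^{q}(\mathbb{P}_{R_1(\omega)},\mathbb{P}_{R_2(\omega)})\geq\inf_{\pi_\omega\in\Pi_\omega}\Delta_{\mathrm{dev}}(\omega)$, and then taking the supremum over $\omega\in\Omega$ and invoking the definition \eqref{eqn_distance_btw_p1_p2} yields \eqref{eqn_theorem_triangle_lower_bound}. The main obstacle I anticipate is the Jensen step: one must be careful that $\phi(x)=(c-x)_+^{\,q}$ is genuinely convex for all $q\geq 1$ (the case $q=1$ reduces to a piecewise linear function while $q>1$ requires the composition argument), and that the integrability needed to invoke Jensen is guaranteed by the compactness of the support sets from the standing assumption, which keeps $\Delta_{\mathrm{nom}}(\omega)$ bounded. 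Everything else is bookkeeping through the triangle inequality and the definition of the Wasserstein distance.
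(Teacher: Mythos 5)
Your proposal is correct and follows essentially the same route as the paper: triangle inequality through the nominal projections, positive part, monotone $q$-th power, expectation under the coupling, Jensen's inequality for the convex map $x\mapsto (c-x)_+^q$, then infimum over couplings and supremum over frequency. Your added remarks — that $\mathbb{E}_{\pi_\omega}[\Delta_{\mathrm{nom}}(\omega)]$ depends only on the marginals (so the infimum over $\pi_\omega$ is effectively vacuous) and that compact supports guarantee the integrability needed for Jensen — are correct refinements of the paper's argument rather than a different proof.
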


\begin{proof}
Given $q \geq 1$, fix any frequency $\omega \in \Omega$ and let $\pi_{\omega} \in \Pi_{\omega}$ be any admissible joint distribution at that frequency. For the brevity of notation, we will drop the $\omega$ argument from the quantities of interests when it is obvious that quantities are function of frequency $\omega$. For any $r_1 \in \mathcal{R}_{P_{1}}$ and $r_2 \in \mathcal{R}_{P_{2}}$, we can define $\Delta_{\mathrm{nom}}$ that denotes the sum of deviation (measured in terms of the chordal distance metric) of any perturbed models from their respective nominal models using \eqref{eqn_deltaNom_definition} and it is a random variable. Using the triangle inequality for the $\mathrm{d_{chord}}$ using $\Delta_{\mathrm{nom}}$, we see that
\begin{align*}
&\mathrm{d_{chord}}(r_1, r_2) \geq \left( \mathrm{d_{chord}}(\overline{R}_{1}, \overline{R}_{2}) - \Delta_{\mathrm{nom}} 
\right)_{+} \\
\iff 
&\mathrm{d_{chord}}(r_1, r_2)^q \geq \left( 
\mathrm{d_{chord}}(\overline{R}_{1}, \overline{R}_{2}) - \Delta_{\mathrm{nom}} 
\right)^{q}_{+}.
\end{align*}
Since $\Delta_{\mathrm{nom}}$ is a random variable, we 
take expectation with respect to the joint distribution $\pi_{\omega}$ on both sides to get
\begin{align*}
&\mathbb{E}_{(r_1, r_2) \sim \pi_{\omega}} \left[ \mathrm{d_{chord}}(r_1, r_2)^q \right] \\
\geq
&\mathbb{E}_{(r_1, r_2) \sim \pi_{\omega}} 
\left[
\left( 
\mathrm{d_{chord}}(\overline{R}_{1}, \overline{R}_{2}) - \Delta_{\mathrm{nom}} 
\right)^{q}_{+}
\right] \\
\geq
&\left( 
\mathrm{d_{chord}}(\overline{R}_{1}, \overline{R}_{2}) - 
\mathbb{E}_{(r_1, r_2) \sim \pi_{\omega}} 
\left[
\Delta_{\mathrm{nom}} 
\right]
\right)^{q}_{+}.
\end{align*}
where we applied Jensen's inequality (for convex function $f(x) = (x)^{q}_{+}$ for $q \geq 1$, Jensen's inequality implies $\mathbb{E}[(x)^{q}_{+}] \geq (\mathbb{E}[x])^{q}_{+}$) on the right hand side. Note that $\mathbb{E}_{(r_1, r_2) \sim \pi_{\omega}} \left[ \mathrm{d_{chord}}(r_1, r_2)^q \right]$ is simply equal to $\int_{\mathcal{R}_{P_{1}} \times \mathcal{R}_{P_{2}}} \mathrm{d_{chord}}(r_1, r_2)^q \, d\pi_{\omega}$. Taking infimum over all joint distributions $\pi_{\omega} \in \Pi(\mathbb{P}_{R_{1}(\omega)}, \mathbb{P}_{R_{2}(\omega)})$ on both sides, we get
\begin{align*}
&W_q^q(\mathbb{P}_{R_{1}(\omega)}, \mathbb{P}_{R_{2}(\omega)}) \\
&\geq 
\inf_{\pi_{\omega} \in \Pi_{\omega}}
\left( 
\mathrm{d_{chord}}(\overline{R}_{1}, \overline{R}_{2}) \! - \!
\mathbb{E}_{(r_1, r_2) \sim \pi_{\omega}} 
\left[
\Delta_{\mathrm{nom}} 
\right]
\right)^{q}_{+}.
\end{align*}
Taking supremum over all $\omega \in \Omega$ on both sides yields \eqref{eqn_theorem_triangle_lower_bound} and the proof is complete.
\end{proof}

\begin{remark}
We believe that a similar lower bound like the one in \eqref{eqn_theorem_triangle_lower_bound} for the $\mathrm{d}_{q}(P_1, P_2)$ can be obtained using \cite[Theorem~14.60]{rockafellar1998variational} by adapting the integral-infimum interchange theorem to supremum, where interchange of supremum and integration happens under conditions favouring Fubini-type arguments. Similarly, we can use Dobrushin inequality given any feasible candidate transport plan to obtain a simple yet conservative upper bound for the distance measure. We leave both the expositions as future works.
\end{remark}

\subsection{$\mathbb{P}_{P_{1}(\omega)}, \mathbb{P}_{P_{2}(\omega)}$ Being Empirical Distributions} 
Suppose that $\forall \omega \in \Omega$, for system $\ell \in \{1, 2\}$,
we construct the empirical distribution $\mathbb{P}_{P_{\ell}(\omega)}$ using samples of frequency response data $\left\{ \hat{P}^{(i)}_{\ell}(j \omega) \right\}^{N}_{i=1}$ obtained through $N \in \mathbb{N}$ independent system identification trials. That is, $\forall \omega \in \Omega$, we construct the empirical distribution as
\begin{align}
\label{eqn_empirical_distrbutions}
\mathbb{P}_{P_{\ell}(\omega)} 
&=
\frac{1}{N} \sum^{N}_{i = 1} \boldsymbol{\delta}_{\hat{P}^{(i)}_{\ell}(j \omega)}, \quad \ell = 1,2,
\end{align}
where $\boldsymbol{\delta}_{\hat{P}^{(i)}_{\ell}(j \omega)}$ denotes the Dirac delta measure concentrated at the point $\hat{P}^{(i)}_{\ell}(j \omega) \in \mathbb{C}$. Then, using \eqref{eq:projected_distribution} from Proposition \ref{proposition_projected_distribution}, one can obtain the corresponding projected distribution $\mathbb{P}_{R_{\ell}(\omega)}$, for each system $\ell = 1,2$. The following proposition describes the computation of distance metric for this special case of empirical distributions. 

\begin{proposition}
\label{proposition_empirical_distributions_distance}
Given $q \geq 1$, suppose that the empirical distributions $\mathbb{P}_{P_{1}(\omega)}$ and $\mathbb{P}_{P_{2}(\omega)}$ of both the systems $P_1$ and $P_2$ are given by \eqref{eqn_empirical_distrbutions}. Then, 
\begin{equation}
\label{eqn_empirical_distance_p1_p2}
\footnotesize
\mathrm{d}_{q}(P_1, P_2) 
\!
=
\!
\sup_{\omega \in \Omega}
\inf_{\pi_\omega \in \Pi_{\omega}} \!
\sum^{N}_{i = 1}
\sum^{N}_{k = 1}
\mathrm{d_{chord}}\left(\hat{R}^{(i)}_{1}(j \omega), \hat{R}^{(k)}_{2}(j \omega)\right)^{q} \!\!
\pi_\omega(i,k),
\end{equation}
where $\hat{R}^{(i)}_{\ell}(j \omega) = \phi^{-1}\left(\hat{P}^{(i)}_{\ell}(j \omega) \right)$ for $\ell = 1,2$ and $\pi_\omega(i,k)$ denotes a valid joint distribution between the distributions at frequency $\omega \in \Omega$. 
\end{proposition}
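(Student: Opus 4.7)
The plan is to reduce the general Wasserstein definition in \eqref{eqn_distance_btw_p1_p2} to the discrete optimal transport problem by exploiting the finite support of the empirical distributions. The proof is essentially a specialisation, so I expect no serious obstacles; the main care is in tracking the push-forward through $\phi^{-1}$ and correctly describing admissible couplings.

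First I would show that the inverse stereographic projection of each empirical distribution is again an empirical distribution on $\partial\mathfrak{R}$. Since $\phi^{-1}$ is a measurable (in fact, homeomorphic away from $\mathfrak{N}$) map and push-forward commutes with finite convex combinations of Dirac measures, for each $\omega \in \Omega$ and $\ell \in \{1,2\}$ one has
\begin{equation*}
\mathbb{P}_{R_{\ell}(\omega)}
= (\phi^{-1})_{\#} \mathbb{P}_{P_{\ell}(\omega)}
= \frac{1}{N}\sum_{i=1}^{N} \boldsymbol{\delta}_{\hat{R}^{(i)}_{\ell}(j\omega)},
\end{equation*}
where $\hat{R}^{(i)}_{\ell}(j\omega) = \phi^{-1}(\hat{P}^{(i)}_{\ell}(j\omega))$. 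This follows directly from Definition \ref{define_pushforward} applied to any Borel set $\mathbf{B}\subseteq \partial\mathfrak{R}$, since $(\phi^{-1})_{\#}\boldsymbol{\delta}_{x} = \boldsymbol{\delta}_{\phi^{-1}(x)}$.

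Next, I would specialise the Wasserstein problem in \eqref{eqn_distance_btw_p1_p2} to this discrete setting. By standard optimal transport theory, when both marginals are uniform empirical measures supported on $N$ points each, every coupling $\pi_\omega \in \Pi_\omega$ is itself supported on the finite product set $\{\hat{R}^{(i)}_{1}(j\omega)\}_{i=1}^{N} \times \{\hat{R}^{(k)}_{2}(j\omega)\}_{k=1}^{N}$, and can be represented by a nonnegative matrix $\pi_\omega(i,k)$ with marginal constraints $\sum_{k}\pi_\omega(i,k) = \sum_{i}\pi_\omega(i,k) = 1/N$. The integral in \eqref{eqn_distance_btw_p1_p2} collapses to the finite double sum
\begin{equation*}
\int_{\mathcal{R}_{P_1}\times\mathcal{R}_{P_2}} \mathrm{d_{chord}}(r_1,r_2)^q \, \pi_\omega(dr_1, dr_2)
= \sum_{i=1}^{N}\sum_{k=1}^{N} \mathrm{d_{chord}}(\hat{R}^{(i)}_{1}(j\omega), \hat{R}^{(k)}_{2}(j\omega))^{q}\, \pi_\omega(i,k).
\end{equation*}

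Finally, I would take the infimum over admissible couplings $\pi_\omega$ (which is a finite-dimensional linear program and hence a genuine minimum), followed by the supremum over $\omega \in \Omega$, which yields exactly \eqref{eqn_empirical_distance_p1_p2}. The only bookkeeping point worth flagging is that the notation $\pi_\omega(i,k)$ in the statement implicitly encodes the marginal constraints inherited from the empirical distributions, so I would remark explicitly that $\pi_\omega$ ranges over the transportation polytope $\{\pi_\omega \geq 0 : \sum_{k}\pi_\omega(i,k) = \sum_{i}\pi_\omega(i,k) = 1/N\}$, which is precisely the discrete realisation of $\Pi_\omega$.
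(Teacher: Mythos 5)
Your proposal is correct and follows essentially the same route as the paper, whose proof is a one-line remark that the result follows by specialising the Wasserstein definition in \eqref{eqn_distance_btw_p1_p2} to the empirical case; you simply supply the details (push-forward of Dirac sums under $\phi^{-1}$, finite support of couplings, collapse of the integral to a double sum) that the paper leaves implicit. No gaps.
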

\begin{proof}
Adapting the distance calculation from \eqref{eqn_distance_btw_p1_p2} to the Wasserstein distance between empirical distributions $\mathbb{P}_{R_{1}(\omega)}, \mathbb{P}_{R_{2}(\omega)}$ using the chordal metric given by \eqref{eqn_geodesic_distance_definition} yields the result \eqref{eqn_empirical_distance_p1_p2}.
\end{proof}

The continuity in the frequency $\omega \in \Omega$ dimension still makes the exact computation of \eqref{eqn_empirical_distance_p1_p2} hard. One way to approximately address this problem is to discretize the frequency dimension as finely as possible and deploy numerical approximation methods. That is, we can define the discretized frequency space with $M \in \mathbb{N}$ points as $\Omega_{M} = \left\{ \omega_i \in \Omega \right\}^{M}_{i = 1}$. Then, the distance in \eqref{eqn_empirical_distance_p1_p2} can be approximated as
\begin{equation}
\label{eqn_approx_empirical_distance_p1_p2}
\footnotesize
\hat{\mathrm{d}}_{q}(P_1, P_2)
\!
=
\!
\max_{\omega \in \Omega_{M}} \!
\inf_{\pi_\omega \in \Pi_{\omega}} \!
\sum^{N}_{i = 1}
\sum^{N}_{k = 1}
\mathrm{d_{chord}}(\hat{R}^{(i)}_{1}(j \omega), \hat{R}^{(k)}_{2}(j \omega))^{q}
\pi_\omega(i,k).   
\end{equation}
We will now aim to get an empirical upper bound on $\hat{\mathrm{d}}_{q}(P_1, P_2)$ using similar earlier arguments in the following proposition.
\begin{proposition}
\label{proposition_empirical_Hausdorff_upper_bound}
For each $\omega \in \Omega_M$, let $\mathbb{P}_{R_{\ell}(\omega)}$ for system $\ell = \{1, 2\}$ be its empirical distribution defined over $N$ samples supported on $\partial \mathcal{R}$ and given by
\begin{align}
\label{eqn_empirical_PR_distribution}
\mathbb{P}_{R_{\ell}(\omega)} 
= 
\frac{1}{N} \sum_{i=1}^N \boldsymbol{\delta}_{R_\ell^{(i)}(j\omega)}, \quad \ell = 1,2.
\end{align}
Let $\hat{\pi}_{\omega}$ be any admissible joint distribution in $\Pi(\mathbb{P}_{R_{1}(\omega)}, \mathbb{P}_{R_{2}(\omega)})$ at frequency $\omega$ and the associated worst-case empirical cost at that frequency be defined as 
\begin{subequations}
\label{eqn_empirical_transport_cost}    
\begin{align}
C(\omega)
&:= 
\sup_{i \in [1,N],  
k \in [1,N]} \mathrm{d_{chord}}\left(R_1^{(i)}(j\omega), R_2^{(k)}(j\omega)\right).
\end{align}
\end{subequations}
Then, given $q \geq 1$, we see that
\begin{equation}
\label{eqn_proposition_empirical_Hausdorff_upper_bound}
\hat{d}_q(P_1, P_2)
\leq 
\max_{\omega \in \Omega_M} (C(\omega))^{q}.
\end{equation}
\end{proposition}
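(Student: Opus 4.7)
The plan is to bound the empirical transport cost uniformly by $C(\omega)^q$ using the fact that $C(\omega)$ is the pointwise supremum of the chordal distance over all sample pairs, and then to commute the infimum over couplings and the maximum over the discretized frequencies with the bound.

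First I would fix an arbitrary $\omega \in \Omega_M$ and an arbitrary admissible coupling $\pi_\omega \in \Pi(\mathbb{P}_{R_{1}(\omega)}, \mathbb{P}_{R_{2}(\omega)})$. Since $\pi_\omega(i,k) \geq 0$ for all index pairs $(i,k) \in [1,N] \times [1,N]$ and the marginals sum to one, the double sum $\sum_{i=1}^{N} \sum_{k=1}^{N} \pi_\omega(i,k) = 1$. By the definition of $C(\omega)$ in \eqref{eqn_empirical_transport_cost}, we have $\mathrm{d_{chord}}(R_1^{(i)}(j\omega), R_2^{(k)}(j\omega)) \leq C(\omega)$ for every $(i,k)$, so raising both sides to the power $q \geq 1$ (which preserves the inequality since the chordal distance is non-negative) gives
\begin{equation*}
\sum_{i=1}^{N} \sum_{k=1}^{N} \mathrm{d_{chord}}(R_1^{(i)}(j\omega), R_2^{(k)}(j\omega))^{q} \, \pi_\omega(i,k) \leq C(\omega)^{q} \sum_{i=1}^{N} \sum_{k=1}^{N} \pi_\omega(i,k) = C(\omega)^{q}.
\end{equation*}

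Next, since this bound holds uniformly in $\pi_\omega \in \Pi_\omega$ and is independent of the coupling, I can take the infimum over all admissible joint distributions on the left-hand side while keeping the right-hand side unchanged. This yields $\inf_{\pi_\omega \in \Pi_\omega} \sum_{i,k} \mathrm{d_{chord}}(R_1^{(i)}, R_2^{(k)})^{q} \pi_\omega(i,k) \leq C(\omega)^{q}$. Finally, taking the maximum over $\omega \in \Omega_M$ on both sides and recalling the definition of $\hat{\mathrm{d}}_{q}(P_1, P_2)$ from \eqref{eqn_approx_empirical_distance_p1_p2} delivers \eqref{eqn_proposition_empirical_Hausdorff_upper_bound}.

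There is no genuine obstacle here: the argument is a one-line Hausdorff-type relaxation of the optimal transport objective, analogous in spirit to the proof of Theorem \ref{theorem_support_distance_upper_bound}, and the only subtlety is noting that the bound $C(\omega)^q$ is coupling-independent so that it survives the infimum. The role of $q \geq 1$ is merely to ensure monotonicity of $x \mapsto x^q$ on $[0,\infty)$, which is already invoked implicitly when comparing chordal distances raised to the power $q$.
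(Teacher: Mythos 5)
Your proposal is correct and follows essentially the same argument as the paper: bound each pairwise chordal cost by $C(\omega)$, raise to the power $q$, sum against the coupling (whose entries sum to one), then pass to the infimum over couplings and the maximum over $\Omega_M$. If anything, your version is slightly tidier than the paper's, which writes $W_q^q$ on the left-hand side before the infimum over couplings has actually been taken.
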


\begin{proof}
Fix any frequency $\omega \in \Omega_M$ and let $\hat{\pi}_{\omega}$ be any admissible joint distribution at that frequency. Since $\mathbb{P}_{R_{1}(\omega)}$ and $\mathbb{P}_{R_{2}(\omega)}$ are empirical distributions with $N$ points, (each with equal probability), we see that for any $i = 1, \dots N$ and $k = 1, \dots, N$, 
\begin{align*}
\mathrm{d_{chord}}\left((R_1^{(i)}(j\omega), R_2^{(k)}(j\omega)\right) 
&\leq 
C(\omega) \\
\iff
\mathrm{d_{chord}}\left((R_1^{(i)}(j\omega), R_2^{(k)}(j\omega)\right)^{q} 
&\leq 
(C(\omega))^{q},
\end{align*}
where $C(\omega)$ is given by \eqref{eqn_empirical_transport_cost}. Taking expectation with respect to the joint distribution $\hat{\pi}_{\omega}$ on both sides, we get
\begin{align*}
W_q^q(\mathbb{P}_{R_{1}(\omega)}, \mathbb{P}_{R_{2}(\omega)})
&\leq 
\sum_{i=1}^{N} 
\sum_{k=1}^{N} (C(\omega))^{q} \, \hat{\pi}_{\omega}(i, k) \\
&=
(C(\omega))^{q}
\sum_{i=1}^{N} 
\sum_{k=1}^{N}  \, \hat{\pi}_{\omega}(i, k) \\
&=
(C(\omega))^{q}.
\end{align*}
Taking infimum over the set of all possible joint distributions $\Pi_{\omega}$ and subsequently taking maximum over $\omega \in \Omega_M$ on both sides gives \eqref{eqn_proposition_empirical_Hausdorff_upper_bound}.
\end{proof}

Following the similar thinking of Theorem \ref{theorem_triangle_lower_bound}, we will now obtain a lower bound for the empirical version of the distance measure $\hat{d}_q(P_1, P_2)$ based on the triangle inequality involving the deviation of the random models of each system from their nominal models. 

\begin{theorem}
\label{theorem_triangle_lower_bound_time_domain_empirical}
Suppose at each frequency $\omega \in \Omega_M$, the systems $P_1$ and $P_2$ admit known nominal models $\overline{P}_1(j\omega), \overline{P}_2(j\omega) \in \mathbb{C}$ and let $\overline{R}_{1}(\omega) := \phi^{-1}(\overline{P}_1(j\omega))$ and $\overline{R}_{2}(\omega) := \phi^{-1}(\overline{P}_2(j\omega))$. Further, let the empirical distribution denoted by $\mathbb{P}_{R_{\ell}(\omega)}$ for system $\ell = \{1, 2\}$ charactering its respective system uncertainty at the frequency $\omega \in \Omega_M$ be given by \eqref{eqn_empirical_PR_distribution} and additionally let $\pi_{\omega} \in \Pi(\mathbb{P}_{R_{1}(\omega)}, \mathbb{P}_{R_{2}(\omega)})$ be any feasible joint distribution at the frequency $\omega \in \Omega_M$. Then, for $i, k \in \{i, \dots, N\}$,
\begin{align}
\Delta^{(ik)}_{\mathrm{nom}}(\omega)    
&:= 
\mathrm{d_{chord}}\left(\omega; R_{1}^{(i)}, \overline{R}_{1} \right)
+
\mathrm{d_{chord}}\left(\omega; R_{2}^{(k)}, \overline{R}_{2} \right),  \label{eqn_deviation_from_nominal} \\
\hat{d}_q(P_1, P_2) 
&\geq 
\max_{\omega \in \Omega_M} 
\inf_{\pi_{\omega} \in \Pi_{\omega}}
\left( 
\Delta_{\mathrm{dev}}(\omega)
\right)^{q}_{+}, \label{eqn_theorem_triangle_lower_bound_time_domain_empirical} \\
\Delta_{\mathrm{dev}}(\omega)
&= 
\mathrm{d_{chord}}(\omega; \overline{R}_{1}, \overline{R}_{2})\! - \!
\sum^{N}_{i = 1}
\sum^{N}_{k = 1}
\Delta^{(ik)}_{\mathrm{nom}}(\omega) \, 
\pi_{\omega}(i, k).
\end{align}
\end{theorem}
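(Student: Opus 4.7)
The plan is to mirror the continuous-distribution argument used in Theorem \ref{theorem_triangle_lower_bound}, replacing integrals against $\pi_{\omega}$ by the discrete double sum $\sum_{i,k} \pi_{\omega}(i,k)$ inherent to empirical distributions. First I fix a frequency $\omega \in \Omega_M$ and an arbitrary feasible joint distribution $\pi_{\omega} \in \Pi_{\omega}$. For each atom pair $(R_1^{(i)}(j\omega), R_2^{(k)}(j\omega))$, the chordal distance satisfies the triangle inequality through the pair of nominal projections $\overline{R}_1(\omega)$ and $\overline{R}_2(\omega)$, yielding
\begin{align*}
\mathrm{d_{chord}}\bigl(R_1^{(i)}, R_2^{(k)}\bigr)
\geq
\mathrm{d_{chord}}\bigl(\overline{R}_1, \overline{R}_2\bigr) - \Delta^{(ik)}_{\mathrm{nom}}(\omega),
\end{align*}
where $\Delta^{(ik)}_{\mathrm{nom}}(\omega)$ is defined by \eqref{eqn_deviation_from_nominal}. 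Since the left-hand side is non-negative, the inequality persists after taking the positive part $(\cdot)_{+}$ on the right, and then raising both sides to the $q$-th power preserves the direction because $x \mapsto (x)_{+}^{q}$ is monotone nondecreasing for $q \geq 1$.

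Next I would weight by $\pi_{\omega}(i,k)$ and sum over $(i,k)$. The left-hand side is exactly the transport cost appearing inside the infimum in \eqref{eqn_approx_empirical_distance_p1_p2}, while on the right-hand side I invoke the discrete Jensen inequality applied to the convex map $x \mapsto (x)_{+}^{q}$ (convex for $q \geq 1$) with respect to the probability mass $\pi_{\omega}(\cdot,\cdot)$:
\begin{align*}
\sum_{i,k} \bigl( \mathrm{d_{chord}}(\overline{R}_1, \overline{R}_2) - \Delta^{(ik)}_{\mathrm{nom}}(\omega) \bigr)_{+}^{q} \pi_{\omega}(i,k)
\geq
\Bigl( \mathrm{d_{chord}}(\overline{R}_1, \overline{R}_2) - \sum_{i,k} \Delta^{(ik)}_{\mathrm{nom}}(\omega)\, \pi_{\omega}(i,k) \Bigr)_{+}^{q},
\end{align*}
where the constant $\mathrm{d_{chord}}(\overline{R}_1, \overline{R}_2)$ passes through because $\pi_{\omega}$ is a probability measure with total mass one.

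To conclude, I take the infimum over all admissible joint distributions $\pi_{\omega} \in \Pi_{\omega}$ on both sides. The left-hand side becomes $W_{q}^{q}(\mathbb{P}_{R_{1}(\omega)}, \mathbb{P}_{R_{2}(\omega)})$ by the definition of the empirical Wasserstein distance. Finally, taking the maximum over the finite grid $\omega \in \Omega_M$ matches the definition of $\hat{d}_{q}(P_1, P_2)$ in \eqref{eqn_approx_empirical_distance_p1_p2} and yields the stated bound \eqref{eqn_theorem_triangle_lower_bound_time_domain_empirical}.

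The main obstacle is ensuring that the Jensen step is tight enough to be useful, since it cannot be strengthened further without additional structure on $\pi_{\omega}$; this is why the bound is written with an infimum over $\pi_{\omega}$ on the right-hand side rather than being collapsed to a single deterministic quantity. A minor subtlety is that the marginals of $\pi_{\omega}$ only enforce equal total mass on each support point ($1/N$), but since $\mathrm{d_{chord}}(\overline{R}_1, \overline{R}_2)$ is independent of $(i,k)$, it survives the averaging cleanly, and the resulting expression depends on $\pi_{\omega}$ only through the expected deviation term, exactly as written in the theorem.
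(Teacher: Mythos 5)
Your proposal is correct and follows essentially the same route as the paper's proof: triangle inequality through the nominal projections, positive part and $q$-th power, weighting by $\pi_{\omega}(i,k)$, discrete Jensen's inequality for $x \mapsto (x)_{+}^{q}$, then infimum over couplings and maximum over the frequency grid. Your added remark that the positive part can be inserted because the left-hand side is non-negative is a slightly more careful justification of a step the paper takes implicitly, but the argument is the same.
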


\begin{proof}
We will follow the arguments in proof of Theorem \ref{theorem_triangle_lower_bound}. Let us fix a frequency $\omega \in \Omega_{M}$. As usual, for the brevity of notation, we will drop the $\omega$ argument when things are obvious. Using the triangle inequality for the chordal distance metric $\mathrm{d_{chord}}$ on the Riemann sphere involving the deviation of system models from their respective nominal models, we see that for every sample $R_1^{(i)} \in \mathrm{supp}(\mathbb{P}_{R_1})$ and $R_2^{(k)} \in \mathrm{supp}(\mathbb{P}_{R_2})$ with $i,k = \{1, \dots, N\}$, we see that 
\begin{align}
\label{eqn_theorem_triangle_lower_bound_time_domain_empirical_interim_1}
\mathrm{d_{chord}}\left(R_1^{(i)}, R_2^{(k)}\right) 
\geq 
\left( \mathrm{d_{chord}}(\overline{R}_{1}, \overline{R}_{2}) - \Delta^{(ik)}_{\mathrm{nom}} \right)_{+},
\end{align}
where the deviation from the respective system's nominal models $\Delta^{(ik)}_{\mathrm{nom}}$ is given by \eqref{eqn_deviation_from_nominal}. Raising \eqref{eqn_theorem_triangle_lower_bound_time_domain_empirical_interim_1} to the power $q \geq 1$, we get
\begin{align*}
\mathrm{d_{chord}}\left(R_1^{(i)}, R_2^{(k)}\right)^{q} 
\geq 
\left( 
\mathrm{d_{chord}}(\overline{R}_{1}, \overline{R}_{2}) - \Delta^{(ik)}_{\mathrm{nom}} \right)^{q}_{+}.
\end{align*}
Taking expectation with respect to the joint distribution $\pi_{\omega}$ of $\mathbb{P}_{R_{\ell}(\omega)}$ for system $\ell = \{1, 2\}$ on both sides, we get
\begin{align*}
&\sum^{N}_{i = 1}
\sum^{N}_{k = 1}
\mathrm{d_{chord}}\left(R_1^{(i)}, R_2^{(k)}\right)^{q} \, \pi_{\omega}(i, k) \\
&\geq 
\sum^{N}_{i = 1}
\sum^{N}_{k = 1}
\left( 
\mathrm{d_{chord}}(\overline{R}_{1}, \overline{R}_{2}) - \Delta^{(ik)}_{\mathrm{nom}} \right)^{q}_{+} \, \pi_{\omega}(i, k) \\
&\geq 
\left( 
\underbrace{
\mathrm{d_{chord}}(\overline{R}_{1}, \overline{R}_{2}) -
\sum^{N}_{i = 1}
\sum^{N}_{k = 1}
\Delta^{(ik)}_{\mathrm{nom}}  \pi_{\omega}(i, k)
}_{:= \Delta_{\mathrm{dev}}(\omega)}
\right)^{q}_{+},
\end{align*}
where we used the Jensen's inequality as we did before in the proof of Theorem \ref{theorem_triangle_lower_bound}. Subsequently, taking the infimum over all possible joint distributions $\pi_{\omega} \in \Pi(\omega; \mathbb{P}_{R_1}, \mathbb{P}_{R_2})$, and maximizing over $\omega \in \Omega_{M}$ on both sides yields \eqref{eqn_theorem_triangle_lower_bound_time_domain_empirical}.
\end{proof}

\subsection{Numerical Demonstrations}
\begin{figure}
    \centering
    \includegraphics[width=\linewidth]{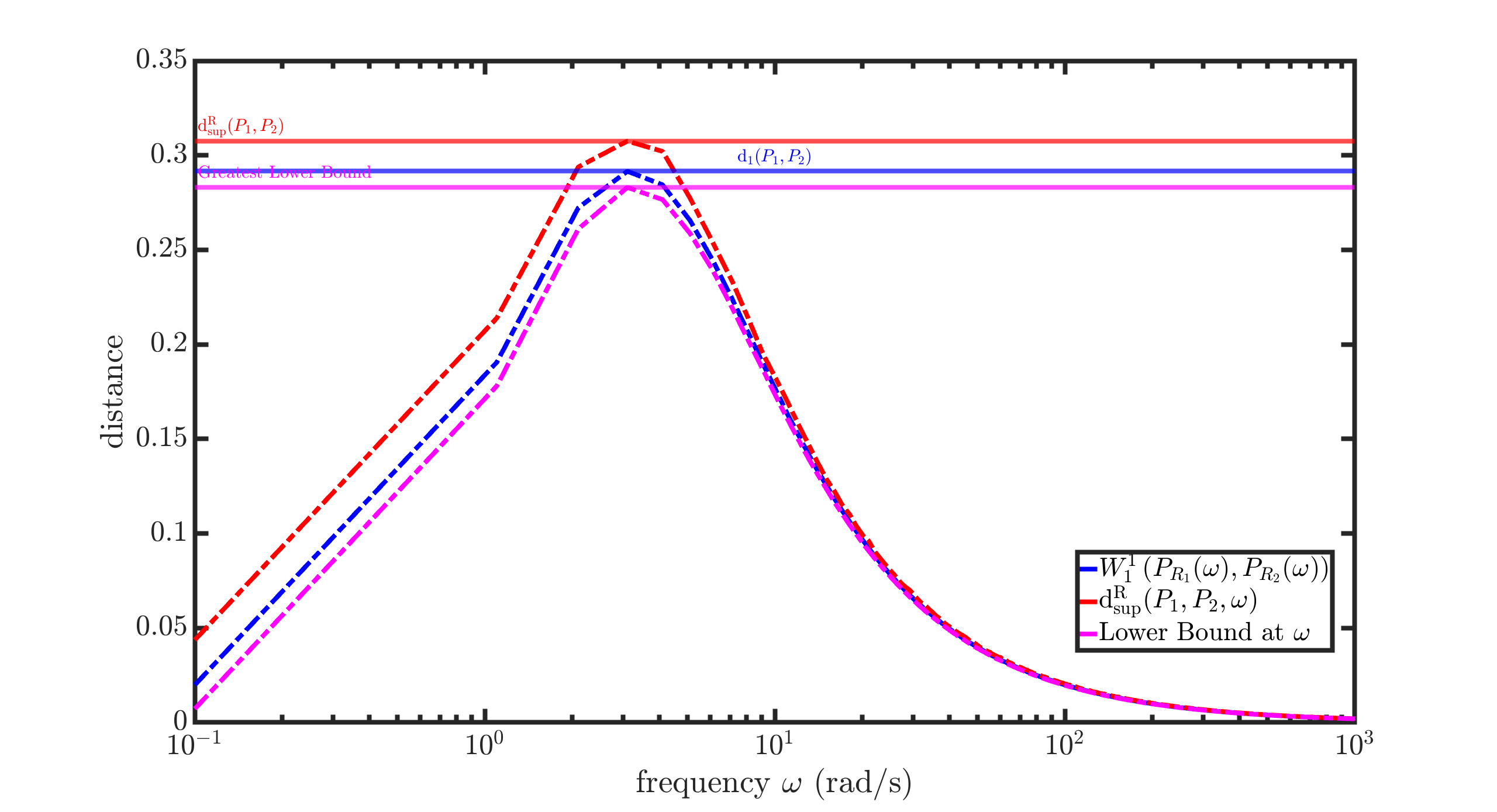}
    \caption{The chordal metric based type-1 Wasserstein distance $\hat{d}_1(P_1, P_2)$ between two systems $P_{1}(s) = \frac{1}{1+0.5s}$ and $P_{2}(s) = \frac{1}{(1+0.2s)(1+0.7s)}$ is shown here in blue colour. The upper bound using the support distance is given in red colour and its lower bound from Theorem \ref{theorem_triangle_lower_bound_time_domain_empirical} is shown in magenta colour. Quantities that are functions of frequency is given by solid lines and their respective maximum values are shown in dashed horizontal lines in the same colour.}
    \label{fig_frequencySimulation}
\end{figure}
To demonstrate the proposed distance in the frequency domain, we consider two different systems with their nominal model transfer functions given by $P_{1}(s) = \frac{1}{1+0.5s}$ and $P_{2}(s) = \frac{1}{(1+0.2s)(1+0.7s)}$ respectively. A discretized frequency space $\Omega_{M}$ containing $M = 1000$ points between $[0.1, 10^{3}] \textrm{ rad}/s$ was formed. At every frequency $\omega \in \Omega_{M}$, $N = 100$ samples of frequency response data were generated by randomly perturbing the nominal frequency response at that frequency. Both the nominal frequency response and the empirical distribution containing the samples at every frequency were projected onto the Riemann sphere using the inverse Stereographic projection given by \eqref{eqn_sphere_cartesian_coords}. The chordal distance metric based type-1 Wasserstein distance $\hat{d}_1(P_1, P_2)$ between two systems given by \eqref{eqn_approx_empirical_distance_p1_p2} was computed using the linear programming approach. The upper bound using the support distance was computed using Proposition \ref{proposition_empirical_Hausdorff_upper_bound} and the lower bound was computed using Theorem \ref{theorem_triangle_lower_bound_time_domain_empirical}. The results are shown in Figure \ref{fig_frequencySimulation}. The proposed frequency domain distance $\hat{d}_1(P_1, P_2)$ between the systems $P_{1}$ and $P_{2}$ was found to be $0.2916$. The corresponding upper and lower bounds computed using Proposition \ref{proposition_empirical_Hausdorff_upper_bound} and Theorem \ref{theorem_triangle_lower_bound_time_domain_empirical} were found to be $0.3075$ and $0.2831$ respectively. 

\subsection{Summary of Frequency Domain Distance}
For SISO systems, the proposed distance measure in the frequency domain using the chordal distance measure mimicking the $\nu$-gap perspective comes in handy with nice pictorial illustration. We believe that this is just a starting point and there are several interesting future research extensions. Though in principle, we expect the theory to carry forward in a similar fashion from SISO to MIMO systems setting, we expect some inherent difficult that comes with higher dimensions to kick in. For example, we would be required to work with the Riemann sphere of higher dimensions and associated stereographic projections are more mathematically involved and complex in nature. Another problem is of dealing with the supremum with respect to the frequency parameter. This problem persists even in SISO and will continue to persist even in MIMO setting. To get around these issues, we would like to formulate and obtain an analogous distance measure between stochastic linear systems in the time domain setting using gap metric perspective where the process of taking supremum with respect to $\omega \in \Omega$ would be absent. The exposition with the time domain setting will be carried out in the next section.

\section{Problem Formulation in Time Domain} \label{sec:time_domain}
In this section, we will present an analogous distance measure in the time domain using the gap metric. 

\subsection{Uncertain Dynamical Systems}
Consider two continuous time LTI dynamical systems living in the space of linear stochastic systems denoted by $\mathbf{\Sigma}$. Let the nominal models of both systems $i \in \{1, 2\}$ be given by
\begin{align}
\label{eqn_nominal_dynamics_system}
\bar{\Sigma}_{i}
:
\Bigl\{
\dot{x}_{i}(t) 
=
A_{i} x_{i}(t) + B_{i} u_{i}(t), \quad y_{i}(t) = C_{i} x(t), 
\end{align}
where for system $i$ at time $t \in \mathbb{R}_{\geq 0}$, we refer to its system states as $x_{i}(t) \in \mathbb{R}^{n}$, the control inputs to the system as $u_{i}(t) \in \mathbb{R}^{m}$, the system outputs as $y_{i}(t) \in \mathbb{R}^{l}$ and the matrices $A_{i} \in \mathbb{R}^{n \times n}$, $B_{i} \in \mathbb{R}^{n \times m}$, $C_{i} \in \mathbb{R}^{l \times n}$. Real-world dynamical systems usually have some form of uncertainties associated with them either due to the lack of modelling tools or due to the inaccuracies of the modelling framework. Hence, in practice, all systems have inherent uncertainties affecting their evolution. We model the uncertainty affecting the evolution of the uncertain system $i \in \{1, 2\}$ using the parameter $\theta_{i} \in \mathbb{R}^{p}$ with $p \leq (n^{2}+nm+ln)$ and $\theta$ directly affects the evolution of the perturbed system described as follows:
\begin{equation}
\label{eqn_perturbed_dynamics_system}
\Sigma_{i}(\theta_{i})
:
\left\{ \begin{aligned} 
  \dot{x}_{i}(t) 
&= 
A_{i}(\theta_{i}) x_{i}(t) + B_{i}(\theta_{i}) u_{i}(t), \\
y_{i}(t) 
&= C_{i}(\theta_{i}) x_{i}(t).    
\end{aligned} \right.
\end{equation} 
The matrices of the perturbed model of the $i$\textsuperscript{th} system given by $(A_{i}(\theta_{i}), B_{i}(\theta_{i}), C_{i}(\theta_{i}))$ are of the same dimensions as $(A_{i}, B_{i}, C_{i})$ respectively. We will assume that $\theta_{i} \sim \mathbf{f_{\theta_{i}}}$ where, $\mathbf{f_{\theta_{i}}}$ denotes the distribution of the parameter $\theta_{i}$ affecting the evolution of system $i \in \{1, 2\}$. It is possible to take a moment-based ambiguity set formulation for $\mathbf{f_{\theta_{i}}}$ by assuming that $\mathbf{f_{\theta_{i}}}$ is unknown but is believed to be belonging to a moment-based ambiguity set denoted by $\mathcal{P}^{\theta_{i}}$ consistent with mean $\mu_{\theta_{i}} \in \mathbb{R}^{p}$ and covariance $\Sigma_{\theta_{i}} \succ 0$. However, for the ease of exposition, we will assume that $\theta_{i} \sim \mathbf{f}_{\theta_{i}} = \mathcal{N}(\mu_{\theta_{i}}, \Sigma_{\theta_{i}})$. We note here that $\Sigma_{i}(\bar{\theta}_{i}) = \bar{\Sigma}_{i}$ meaning that the perturbed system equals the nominal system when uncertainty vanishes at $\bar{\theta}_{i}$ for system $i$. This does not imply that $\mu_{\theta_{i}} = \bar{\theta}_{i}$. The only requirement that is needed is that $\bar{\theta}_{i} \in \mathbf{f}_{\theta_{i}}$ (perfectly fine even if the containment happens asymptotically (as number of samples tend to $\infty$)) so that when the uncertainties of the perturbed system vanish, it results in the nominal system.

\subsection{Gap Between Models}
Having defined the evolution of the nominal model $\bar{\Sigma}_{i}$ using \eqref{eqn_nominal_dynamics_system} and perturbed model $\Sigma_{i}(\theta_{i})$ in \eqref{eqn_perturbed_dynamics_system} for both the systems $i \in \{1, 2\}$, the gap between the nominal model and the perturbed model of the $i$\textsuperscript{th} system denoted by $\mathrm{Gap}_{i}(\theta_{i})$ for $\theta_{i} \sim \mathbf{f}_{\theta_{i}} = \mathcal{N}(\mu_{\theta_{i}}, \sigma^{2}_{\theta_{i}} I_{p})$ can be defined using \eqref{eqn_gap_metric_definition} as
\begin{align}
\label{eqn_gap_between_models}    
\mathrm{Gap}_{i}(\theta_{i}) 
:=
\delta_{g}(\bar{\Sigma}_{i}, \Sigma_{i}(\theta_{i})).
\end{align}
Clearly, $\mathrm{Gap}_{i}(\theta_{i})$ is a random variable in $(0,1)$ if the $i$\textsuperscript{th} system is stable for all possible perturbations due to $\theta_{i} \sim \mathbf{f}_{\theta_{i}} = \mathcal{N}(\mu_{\theta_{i}}, \Sigma_{\theta_{i}})$. However, we need distance between two systems and rather not between two models of the same system. Towards that we define the distance between the nominal models of two systems $i \in \{1, 2\}$ denoted by $\mathrm{dist^{nom}_{\Sigma_{1}, \Sigma_{2}}}$ as
\begin{align}
\label{eqn_distance_between_nominal_models}        
\mathrm{dist^{nom}_{\Sigma_{1}, \Sigma_{2}}}
:=
\delta_{g}(\bar{\Sigma}_{1}, \bar{\Sigma}_{2}).
\end{align}
That is, when the uncertainties of both systems $i \in \{1, 2\}$ vanish, then it simply boils down to the simple gap metric between two deterministic nominal system models $\bar{\Sigma}_{1}$ and $\bar{\Sigma}_{2}$. However, systems always come with uncertainties due to inevitable modelling errors and hence $\mathrm{dist^{nom}_{\Sigma_{1}, \Sigma_{2}}}$ will not truly capture the distance between the two stochastic systems strictly speaking. 

\subsection{Inferring Distribution of Perturbed Plant}
In this regard, we propose to measure the distance between the distributions that are governing the randomness of the plant models of system $i \in \{1, 2\}$. That is, the randomness in $\theta_{i}$ manifests itself as the randomness in the plant $\Sigma_{i}(\theta_{i})$ meaning that $\Sigma_{i}(\theta_{i}) \sim \mathbf{f}_{i}$, where $\mathbf{f}_{i}$ is the distribution of plant models of $i$\textsuperscript{th} system. Towards this, we define the measurable map from the parameter space $\Theta_{i} \subseteq \mathbb{R}^{p}$ to the space of system plants denoted by $\Phi_{i}: \Theta_{i} \mapsto \Sigma_{i}(\theta_{i}) = (A_{i}(\theta_{i}), B_{i}(\theta_{i}), C_{i}(\theta_{i}))$. Then the distribution $\mathbf{f}_{i}$ of the perturbed model of system $i \in \{1, 2\}$ is the push-forward measure of the distribution of $\theta_{i}$ under the map $\Phi_{i}$. That is,
\begin{align}
\label{eqn_distribution_system_models}
\mathbf{f}_{i} = \left(\Phi_{i}\right)_{\#} \mathbf{f}_{\theta_{i}} = \left(\Phi_{i}\right)_{\#} \mathcal{N}(\mu_{\theta_{i}}, \Sigma_{\theta_{i}}),
\end{align}
where for any Borel measurable set $\mathbf{B} \subseteq \mathbb{R}^{(n^{2}+nm+ln)}$ (the space of system plants), the push-forward measure satisfies
\begin{align}
\label{eqn_pushforward_measure}
 \left(\Phi_{i}\right)_{\#} \mathbf{f}_{\theta_{i}}(\mathbf{B}) 
 =
 \mathbb{P}(\Sigma_{i}(\theta_{i}) \in \mathbf{B})
 =
 \mathbf{f}_{\theta_{i}}(\Phi^{-1}_{i}(\mathbf{B})).
\end{align}

Earlier, we mentioned that $\mathrm{Gap}_{i}(\theta_{i})$ is a random variable in $(0,1)$ assuming that $i$\textsuperscript{th} system is stable for all possible perturbations due to $\theta_{i} \sim \mathbf{f}_{\theta_{i}} = \mathcal{N}(\mu_{\theta_{i}}, \Sigma_{\theta_{i}})$. We borrow the following assumptions and the Lipschitz continuity of $\mathrm{Gap}_{i}(\theta_{i})$ from \cite{RenganathanProbabilisticGap}. 
\begin{assumption} \label{assume_theta_continuity}
For system $i \in \{1, 2\}$, the mapping $\theta_{i} \mapsto G_{i}(\theta_{i}) := \begin{bmatrix} N_{i}(\theta_{i}) \\ D_{i}(\theta_{i}) \end{bmatrix}$ is Fr\'echet differentiable and so RCFs $N_{i}(\theta_{i}), D_{i}(\theta_{i})$ are continuously differentiable in $\theta_{i}$ in the sense of $\mathcal{H}_{\infty}$ norm.
\end{assumption}

The randomness in the parameter $\theta_{i}$ induces randomness in the coprime factors $[N_{i}(\theta_{i}), D_{i}(\theta_{i})]$. Subsequently, the randomness in the coprime factors $[N_{i}(\theta_{i}), D_{i}(\theta_{i})]$ manifests itself as variations in the graph $G_{i}(\theta_{i})$, and hence in the angle between the graph subspaces $\mathcal{G}_{\bar{\Sigma_{i}}}$ and $\mathcal{G}_{\Sigma_{i}(\theta_{i})}$ which then finally leads to the randomness in the associated gap $\mathrm{Gap}_{i}(\theta_{i})$. Despite $\mathrm{Gap}_{i}(\theta_{i})$ being random for each of the system, we want to formulate and obtain a deterministic distance measure between two stochastic linear systems, which by the way is the main motive of this manuscript. Consider the special case when the $\theta_{i}$ parameter dependence on the perturbed system dynamics in \eqref{eqn_perturbed_dynamics_system} is affine with $\mathbf{f}_{\theta_{i}}$ being Gaussian. Then, the distribution $\mathbf{f}_{i}$ of the $i$\textsuperscript{th} system due to \eqref{eqn_distribution_system_models} turns out to be Gaussian as well due to the affine transformation properties of Gaussian random vectors. The following lemma formally establishes this result.

\begin{lemma} \label{lemma_affine_parametric_plant_distribution}
Let $ \theta \sim \mathcal{N}(\mu_{\theta}, \Sigma_{\theta}) $ denote a random parameter vector in $\mathbb{R}^{p}$. Given $d \in \mathbb{N}$, consider the state-space matrices of the perturbed LTI dynamical system $\Sigma(\theta)$ whose affine dependence on the parameter $\theta$ is given by
\begin{subequations}
\label{eqn_lemma_affine_parametric_plant_matrices}
\begin{align}
A(\theta) 
&= 
A^0 + \sum_{k=1}^d \theta^{(k)} A^{(k)}, \\
B(\theta) 
&= 
B^0 + \sum_{k=1}^d \theta^{(k)} B^{(k)}, \\
C(\theta) 
&= 
C^0 + \sum_{k=1}^d \theta^{(k)} C^{(k)},
\end{align}
\end{subequations}
where each of the $A^{(k)} \in \mathbb{R}^{n \times n}$, $B^{(k)} \in \mathbb{R}^{n \times m}$, and $C^{(k)} \in \mathbb{R}^{l \times n}$ for $k = 1, \dots, d$ are known apriori. Let $z := \mathrm{vec}(A(\theta), B(\theta), C(\theta)) \in \mathbb{R}^{p}$ where $p = n^2 + nm + ln$. Then, 
\begin{align}
\label{eqn_z_distribution}
z \sim \mathcal{N}(J \mu_{\theta} + z^{0},\, J \Sigma_{\theta} J^{\top}),    
\end{align}
where, $z^{0} = 
\begin{bmatrix}
\mathrm{vec}(A^0) \\
\mathrm{vec}(B^0) \\
\mathrm{vec}(C^0)
\end{bmatrix} \in \mathbb{R}^p$,
and 
$J = 
\begin{bmatrix}
\mathrm{vec}(A^{(1)}) & \cdots & \mathrm{vec}(A^{(d)}) \\
\mathrm{vec}(B^{(1)}) & \cdots & \mathrm{vec}(B^{(d)}) \\
\mathrm{vec}(C^{(1)}) & \cdots & \mathrm{vec}(C^{(d)})
\end{bmatrix} \in \mathbb{R}^{p \times d}$.

\end{lemma}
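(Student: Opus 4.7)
The plan is to reduce the claim to the standard closure property of the Gaussian family under affine transformations. First I would apply the $\mathrm{vec}$ operator to each of the three matrix-valued identities in \eqref{eqn_lemma_affine_parametric_plant_matrices}. Because $\mathrm{vec}$ is linear, this yields
\[
\mathrm{vec}(A(\theta)) = \mathrm{vec}(A^0) + \sum_{k=1}^d \theta^{(k)}\, \mathrm{vec}(A^{(k)}),
\]
and analogous expressions for $B(\theta)$ and $C(\theta)$. Stacking the three vectorizations in the ordering that defines $z$, I can read off the constant part as $z^0$ and the coefficient of $\theta$ as the block matrix $J$ exactly as stated in the lemma, giving the compact affine representation $z = z^0 + J\theta$ with $J \in \mathbb{R}^{p \times d}$.

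Second, I would invoke the standard fact that if $\theta \sim \mathcal{N}(\mu_\theta, \Sigma_\theta)$ and $z = z^0 + J\theta$ with deterministic $z^0$ and $J$, then $z$ is Gaussian with mean $J\mu_\theta + z^0$ and covariance $J \Sigma_\theta J^\top$. This is immediate either from a direct characteristic-function computation, $\mathbb{E}[e^{\mathbf{i} u^\top z}] = e^{\mathbf{i} u^\top z^0}\,\mathbb{E}[e^{\mathbf{i}(J^\top u)^\top \theta}]$, or from the observation that every linear functional $u^\top z$ is an affine function of jointly Gaussian coordinates and hence scalar-Gaussian. Combining with the previous step directly yields \eqref{eqn_z_distribution}.

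The argument contains no real analytical obstacle; the only substantive care required is bookkeeping to confirm that the stacking order used to build $J$ matches the ordering used to define $z$, so that the three row-blocks $[\mathrm{vec}(A^{(1)}),\ldots,\mathrm{vec}(A^{(d)})]$, $[\mathrm{vec}(B^{(1)}),\ldots,\mathrm{vec}(B^{(d)})]$, and $[\mathrm{vec}(C^{(1)}),\ldots,\mathrm{vec}(C^{(d)})]$ line up with $\mathrm{vec}(A^0), \mathrm{vec}(B^0), \mathrm{vec}(C^0)$ respectively. A mild notational point worth flagging is that $J \Sigma_\theta J^\top$ may be rank-deficient when $J$ does not have full row rank, in which case the pushforward $\mathbf{f}_i$ is a degenerate Gaussian supported on the affine subspace $z^0 + J\mu_\theta + \mathrm{range}(J)$; this does not affect the stated conclusion but should be remembered when later results invoke a density for $\mathbf{f}_i$.
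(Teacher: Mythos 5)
Your proposal is correct and follows essentially the same route as the paper's proof: vectorize each affine matrix identity using linearity of $\mathrm{vec}$, stack to obtain $z = z^{0} + J\theta$, and invoke closure of the Gaussian family under affine maps. Your added remark about possible rank deficiency of $J\Sigma_{\theta}J^{\top}$ is a reasonable observation not made in the paper, but it does not change the argument.
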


\begin{proof}
From \eqref{eqn_lemma_affine_parametric_plant_matrices}, we see that
\begin{align*}
\mathrm{vec}(A(\theta)) 
&= \mathrm{vec}\left(A^0 + \sum_{k=1}^d \theta^{(k)} A^{(k)}\right) \\
&= \mathrm{vec}(A^0) + \sum_{k=1}^d \theta^{(k)} \mathrm{vec}(A^{(k)}) \\
\mathrm{vec}(B(\theta)) 
&= \mathrm{vec}\left(B^0 + \sum_{k=1}^d \theta^{(k)} B^{(k)}\right) \\
&= \mathrm{vec}(B^0) + \sum_{k=1}^d \theta^{(k)} \mathrm{vec}(B^{(k)}) \\
\mathrm{vec}(C(\theta)) 
&= \mathrm{vec}\left(C^0 + \sum_{k=1}^d \theta^{(k)} C^{(k)}\right) \\
&= \mathrm{vec}(C^0) + \sum_{k=1}^d \theta^{(k)} \mathrm{vec}(C^{(k)})
\end{align*}
Then, the vectorized perturbed plant will evolve as
\begin{align}
\label{eqn_vectorized_perturbed_plant_evolution_dynamics}
z = z^{0} + \sum_{k=1}^d \theta^{(k)} z^{(k)},
\quad \text{where} \quad
z^{(k)} 
:= 
\begin{bmatrix}
\mathrm{vec}(A^{(k)}) \\
\mathrm{vec}(B^{(k)}) \\
\mathrm{vec}(C^{(k)})
\end{bmatrix}.
\end{align}
Now define $J:= \begin{bmatrix} z^{(1)} & \cdots & z^{(d)} \end{bmatrix} \in \mathbb{R}^{p \times d}$. Using $J$ in \eqref{eqn_vectorized_perturbed_plant_evolution_dynamics}, we get
\begin{align*}
z = z^{0} + J \theta.
\end{align*}
With $\theta \sim \mathcal{N}(\mu_{\theta}, \Sigma_{\theta}) $ and since Gaussianity is preserved under affine transformations, we infer that $z \sim \mathcal{N}(J \mu_{\theta} + z^{0},\; J \Sigma_{\theta} J^{\top})$. Note that result also follows from \eqref{eqn_distribution_system_models}.
\end{proof}

\subsection{Gap Metric Induced Type-$q$ Wasserstein Distance}
Having studied the transformation of the distribution of the parameter under the mapping of the perturbed dynamics to result in the distribution for the perturbed plant models, we are now ready to define the distance between perturbed models of two systems. 
\begin{definition} \label{define_distance_between_systems_time_domain}
The type-$q$ inter-system distance or the gap metric induced type-$q$ Wasserstein distance denoted by $\mathrm{dist^{q}_{\Sigma_{1}, \Sigma_{2}, \delta_{g}}}: \mathbf{f}_{1} \times \mathbf{f}_{2} \rightarrow [0,1]$ between the distributions $\mathbf{f}_{1}, \mathbf{f}_{2}$ governing the randomness of the perturbed models of systems $i \in \{1, 2\}$ supported on $\mathrm{supp}(\mathbf{f}_{1}), \mathrm{supp}(\mathbf{f}_{2}) \subset \mathbf{\Sigma}$ respectively is defined as
\begin{subequations}
\label{eqn_distance_between_distributions_system_models}
\begin{align}
&\mathrm{dist^{q}_{\Sigma_{1}, \Sigma_{2}, \delta_{g}}} \nonumber \\
&:= 
\underset{\pi_{\mathbf{f}} \in \Pi\left(\mathbf{f}_{1}, \mathbf{f}_{2}\right)}{\inf}
\mathbb{E}_{\pi_{\mathbf{f}}} 
\left[ 
\delta_{g}(\Sigma_{1}, \Sigma_{2})^{q}
\right] \\
&=
\underset{\pi_{\mathbf{f}} \in \Pi\left(\mathbf{f}_{1}, \mathbf{f}_{2}\right)}{\inf}  \int_{\mathrm{supp}(\mathbf{f}_{1}) \times \mathrm{supp}(\mathbf{f}_{2})} 
\delta_{g}(\Sigma_{1}, \Sigma_{2})^{q} \, 
\pi_{\mathbf{f}}(d \Sigma_{1}, d \Sigma_{2}) \\
&=
\inf_{\pi_{\mathbf{P}}\in\Pi(\mathbf{f}_{P_{1}},\mathbf{f}_{P_{2}})} \!  
\int_{\mathrm{supp}\left(\mathbf{f}_{P_{1}}\right) \times \mathrm{supp}\left(\mathbf{f}_{P_{2}}\right)} \!\!\! \delta_{g}(P_1,P_2)^{q}\, \pi_{\mathbf{P}}(d P_1, dP_2).
\end{align}    
\end{subequations}
\end{definition}
Note that in \eqref{eqn_distance_between_distributions_system_models}, the term $\Pi\left(\mathbf{f}_{1}, \mathbf{f}_{2}\right)$ refers to the set of all joint distributions between the distributions $\mathbf{f}_{1}, \mathbf{f}_{2}$ in the state space and $\Pi(\mathbf{f}_{P_{1}},\mathbf{f}_{P_{2}})$ denotes the set of all joint distributions in the corresponding transfer function space $\mathbf{R}\mathcal{L}_{\infty}$ with $P_{i} = \mathbf{TF}(\Sigma_{i})$ for $i \in \{1, 2\}$. Then, a joint distribution $\pi_{\mathbf{P}} \in \Pi(\mathbf{f}_{P_{1}},\mathbf{f}_{P_{2}})$ is related to the joint distribution $\pi_{\mathbf{f}} \in \Pi\left(\mathbf{f}_{1}, \mathbf{f}_{2}\right)$ as follows
\begin{align}
\label{eqn_joint_dist_state_space_tf_space_relation}
\pi_{\mathbf{P}} 
= 
(\mathbf{TF}\times \mathbf{TF})_{\#} \, \pi_{\mathbf{f}}.
\end{align}
In this considered setting, the plant models of both the systems are sampled from the distributions $\mathbf{f}_{1}, \mathbf{f}_{2}$ and hence in \eqref{eqn_distance_between_distributions_system_models}, we have used the gap metric to compute the distance between the events (plant models) while evaluating the type-$q$ Wasserstein distance to find $\mathrm{dist^{q}_{\Sigma_{1}, \Sigma_{2}, \delta_{g}}}$. 

\textbf{Remarks:} It is compelling to think that for the special case when both $\mathbf{f}_{1}$ and $\mathbf{f}_{2}$ are Gaussian with with mean $\mu_{i} = J_{i} \mu_{\theta_{i}} + z^{0}_{i}$ and covariance $\Sigma_{i} = J_{i} \Sigma_{\theta_{i}} J^{\top}_{i}$ respectively for system $i \in \{1, 2\}$, we would then have an explicit analytic solution which can readily give the distance between the two systems for $q = 2$. However, it turns out to be not true. Recall that, we are sampling plant models for both the systems from their respective distributions $\mathbf{f}_{1}$ and $\mathbf{f}_{2}$. The explicit Gaussian formula for type-$2$ Wasserstein distance applies only when the cost function is quadratic and convex in Euclidean space (like $c(z_{1}, z_{2}) = \norm{z_{1} - z_{2}}^{2}_{2}$), and the distributions are supported on Euclidean vector spaces with standard geometry. However, in our setting with the plant distributions $\mathbf{f}_{1}$ and $\mathbf{f}_{2}$, the \emph{type-$2$ inter-system distance} given by \eqref{eqn_distance_between_distributions_system_models} with $q = 2$ uses the gap metric as the cost function of the transport plan and thus rendering the Wasserstein distance becoming both distribution-dependent and cost-function-dependent. Hence, the optimal transport plan no longer equals the one derived from linear Gaussian maps. Moreover, the cost function of the transport plan described by the gap metric is non-Euclidean, nonlinear, and defined on equivalence classes of transfer functions (up to coprime factorizations). Its geometry is intrinsically nonlinear, and thus does not facilitate a readily available closed-form expression. Though it does not admit a closed-form expression, numerical approximations through sample-based optimal transport methods can be possible. To get an upper bound, we will leverage a well-known fact from the optimal transport theory that the type-$q$ Wasserstein distance is upper bounded by the diameter of the support set, raised to the power $q$. The following proposition formally establishes that observation to get an upper bound. 

\begin{proposition} \label{proposition_gap_diameter_upper_bound}
Given uniform distributions $\mathbf{f}_{1}, \mathbf{f}_{2}$ with their support given by $\mathrm{supp}(\mathbf{f}_{1})$ and $\mathrm{supp}(\mathbf{f}_{2})$ respectively governing the plant models of systems $\Sigma_{1}, \Sigma_{2}$ respectively, we see that $\forall q \geq 1$,
\begin{align}
\label{eqn_proposition_gap_diameter_upper_bound}
\mathsf{dist}^{q}_{\Sigma_{1}, \Sigma_{2}, \delta_{g}}
\leq 
\inf_{\pi_{\mathbf{f}} \in \Pi(\mathbf{f}_{1}, \mathbf{f}_{2})}
\left(\sup_{ \substack{\Sigma_{1} \in \mathrm{supp}(\mathbf{f}_{1}) \\ \Sigma_{2} \in \mathrm{supp}(\mathbf{f}_{2})}} \delta_{g}(\Sigma_{1}, \Sigma_{2})\right)^{q}.
\end{align}
\end{proposition}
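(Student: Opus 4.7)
The plan is to mirror the proof of Theorem~\ref{theorem_support_distance_upper_bound}, exploiting the elementary optimal-transport fact that the transport cost associated with any admissible coupling is bounded above by the (joint) support-diameter raised to the relevant power. The uniformity hypothesis on $\mathbf{f}_1, \mathbf{f}_2$ is not actually needed for the argument; the proof only requires that the supports are well-defined subsets of $\mathbf{\Sigma}$ on which $\delta_g$ is finite, which is automatic since $\delta_g \in [0,1]$.

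First I would fix an arbitrary admissible coupling $\pi_{\mathbf{f}} \in \Pi(\mathbf{f}_1, \mathbf{f}_2)$ and introduce the support-diameter $D := \sup_{\Sigma_1 \in \mathrm{supp}(\mathbf{f}_1),\, \Sigma_2 \in \mathrm{supp}(\mathbf{f}_2)} \delta_g(\Sigma_1, \Sigma_2)$. For $\pi_{\mathbf{f}}$-almost every realisation $(\Sigma_1, \Sigma_2)$, both marginals lie in their respective supports, so the pointwise inequality $\delta_g(\Sigma_1, \Sigma_2) \leq D$ holds. Since $q \geq 1$ and both sides are non-negative, raising to the $q$-th power preserves the inequality, giving $\delta_g(\Sigma_1, \Sigma_2)^q \leq D^q$ $\pi_{\mathbf{f}}$-a.s.

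Next I would integrate this pointwise bound against $\pi_{\mathbf{f}}$, using that $\pi_{\mathbf{f}}$ is a probability measure on $\mathrm{supp}(\mathbf{f}_1) \times \mathrm{supp}(\mathbf{f}_2)$: the integral $\int \delta_g(\Sigma_1, \Sigma_2)^q \, \pi_{\mathbf{f}}(d\Sigma_1, d\Sigma_2)$ is then bounded above by $D^q \cdot \int d\pi_{\mathbf{f}} = D^q$. Taking the infimum over $\pi_{\mathbf{f}} \in \Pi(\mathbf{f}_1, \mathbf{f}_2)$ on the left-hand side recovers exactly $\mathsf{dist}^q_{\Sigma_1, \Sigma_2, \delta_g}$ by Definition~\ref{define_distance_between_systems_time_domain}; the right-hand side $D^q$ is independent of the coupling, so the outer infimum on the right of \eqref{eqn_proposition_gap_diameter_upper_bound} is trivially $D^q$ itself, and the bound follows.

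I do not anticipate a genuine obstacle, since the argument is essentially a one-liner once measurability is handled. The only technical point worth recording is the Borel-measurability and integrability of $(\Sigma_1, \Sigma_2) \mapsto \delta_g(\Sigma_1, \Sigma_2)^q$ required to pass from the $\pi_{\mathbf{f}}$-a.s.\ inequality to the integrated one. Measurability follows from Assumption~\ref{assume_theta_continuity}: the coprime factor maps $\theta_i \mapsto G_i(\theta_i)$ are Fr\'echet differentiable (hence continuous) in the $\mathcal{H}_\infty$ topology, and $\delta_g$ is continuous as an operator norm of a difference of graph projections, so the composition is Borel. Integrability is automatic because $\delta_g$ takes values in $[0,1]$.
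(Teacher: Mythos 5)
Your argument is correct and is essentially identical to the paper's own proof: bound $\delta_{g}(\Sigma_{1},\Sigma_{2})$ pointwise by the support diameter, raise to the power $q$, integrate against an arbitrary coupling (whose total mass is $1$), and take the infimum over couplings. Your added remarks that the uniformity hypothesis is not actually used and that measurability follows from Assumption~\ref{assume_theta_continuity} are accurate observations the paper does not spell out, but they do not change the route.
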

\begin{proof}
For any $\Sigma_{1} \in \mathrm{supp}(\mathbf{f}_{1}), \Sigma_{2} \in \mathrm{supp}(\mathbf{f}_{2})$,
\begin{align*}
\delta_{g}(\Sigma_{1}, \Sigma_{2})
\leq 
\sup_{ \substack{\Sigma_{1} \in \mathrm{supp}(\mathbf{f}_{1}) \\ \Sigma_{2} \in \mathrm{supp}(\mathbf{f}_{2})}} \delta_{g}(\Sigma_{1}, \Sigma_{2}) := \bar{\delta}.
\end{align*}
Given $q \geq 1$, raising both sides to the power $q$, we get
\begin{align*}
\delta_{g}(\Sigma_{1}, \Sigma_{2})^{q} 
&\leq 
\bar{\delta}^{q}.
\end{align*}
Then,
\begin{align*}
&\int_{\mathrm{supp}(\mathbf{f}_{1}) \times \mathrm{supp}(\mathbf{f}_{2})}
\delta_{g}(\Sigma_{1}, \Sigma_{2})^{q} \pi_{\mathbf{f}}(d \Sigma_{1}, d \Sigma_{2}) \\
&\leq
\int_{\mathrm{supp}(\mathbf{f}_{1}) \times \mathrm{supp}(\mathbf{f}_{2})}
\bar{\delta}^{q} \, \pi_{\mathbf{f}}(d \Sigma_{1}, d \Sigma_{2}) \\
&=
\bar{\delta}^{q} \, 
\underbrace{\int_{\mathrm{supp}(\mathbf{f}_{1}) \times \mathrm{supp}(\mathbf{f}_{2})}
\pi_{\mathbf{f}}(d \Sigma_{1}, d \Sigma_{2})}_{= 1} \\
&=
\bar{\delta}^{q}.
\end{align*}
Taking the infimum over all joint distributions $\pi_{\mathbf{f}} \in \Pi(\mathbf{f}_{1}, \mathbf{f}_{2})$ gives \eqref{eqn_proposition_gap_diameter_upper_bound}.
\end{proof}
We provide another upper bound for the proposed distance measure using $\mathrm{dist^{nom}_{\Sigma_{1}, \Sigma_{2}}}$, and the spread of $\theta_{i}$ for $i \in \{1, 2\}$ in the following theorem.
\begin{theorem}
\label{theorem_moment_upper_bound}
Let $\theta_{i} \sim \mathbf{f}_{\theta_{i}}$ for $i \in \{1, 2\}$ and suppose that the distribution $\mathbf{f}_{i}$ of the perturbed model of system $i \in \{1, 2\}$ be given by \eqref{eqn_distribution_system_models}. Further, let assumption \ref{assume_theta_continuity} hold true for both the systems. Assume that $\exists \mathbf{L}_{i} > 0$ such that the gap metric satisfies
\begin{align}
\label{eqn_gap_Lipschitz_assume}
\delta_{g}
\left(
\Phi_{i}(\theta_{i}), \Phi_{i}(\theta^{\prime}_{i})
\right)
\leq
\mathbf{L}_{i} 
\norm{\theta_{i} -\theta^{\prime}_{i}}
\quad \forall \theta_{i}, \theta^{\prime}_{i} \sim \mathbf{f}_{\theta_{i}}.    
\end{align}
Then, $\forall q \geq 1$, we see that
\begin{align}
\label{eqn_theorem_moment_upper_bound}
\mathrm{dist^{q}_{\Sigma_{1}, \Sigma_{2}, \delta_{g}}}
\leq
\left(
\mathrm{dist^{nom}_{\Sigma_{1}, \Sigma_{2}}}
+
\sum^{2}_{i = 1}
\mathbf{L}_{i}
\left(\mathbb{E}
\left[
\norm{\theta_i - \bar\theta_i}^{q}
\right]
\right)^{\frac{1}{q}}
\right)^{q}.
\end{align}
\end{theorem}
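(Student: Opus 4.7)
The plan is to combine the triangle inequality for the gap metric with the Lipschitz assumption \eqref{eqn_gap_Lipschitz_assume}, and then invoke Minkowski's inequality to pass from pointwise bounds on random variables to bounds on their $\mathcal{L}_q$ norms. The only subtlety is that the infimum in \eqref{eqn_distance_between_distributions_system_models} runs over couplings on the system space $\mathrm{supp}(\mathbf{f}_1)\times\mathrm{supp}(\mathbf{f}_2)$, whereas the Lipschitz hypothesis lives at the parameter level; I will resolve this by lifting any parameter-level coupling to a system-level coupling via the push-forward under $\Phi_1\times\Phi_2$, then specializing to the independent (product) coupling of $\mathbf{f}_{\theta_1}$ and $\mathbf{f}_{\theta_2}$.

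First, I would observe that for any $\theta_1\sim\mathbf{f}_{\theta_1}$ and $\theta_2\sim\mathbf{f}_{\theta_2}$ the triangle inequality for $\delta_g$, applied with $\bar\Sigma_1=\Phi_1(\bar\theta_1)$ and $\bar\Sigma_2=\Phi_2(\bar\theta_2)$ as intermediate points, yields
\begin{align*}
\delta_g\!\left(\Phi_1(\theta_1),\Phi_2(\theta_2)\right)
&\le \delta_g\!\left(\Phi_1(\theta_1),\bar\Sigma_1\right) + \delta_g(\bar\Sigma_1,\bar\Sigma_2) + \delta_g\!\left(\bar\Sigma_2,\Phi_2(\theta_2)\right) \\
&\le \mathrm{dist^{nom}_{\Sigma_{1}, \Sigma_{2}}} + \mathbf{L}_1\|\theta_1-\bar\theta_1\| + \mathbf{L}_2\|\theta_2-\bar\theta_2\|,
\end{align*}
where the second line uses \eqref{eqn_distance_between_nominal_models} for the middle term and \eqref{eqn_gap_Lipschitz_assume} (with $\theta'_i=\bar\theta_i$) for the outer two.

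Next, I would push this from a pointwise to a probabilistic statement. Any coupling $\pi_\theta\in\Pi(\mathbf{f}_{\theta_1},\mathbf{f}_{\theta_2})$ induces the coupling $\pi_{\mathbf{f}}:=(\Phi_1\times\Phi_2)_{\#}\pi_\theta\in\Pi(\mathbf{f}_1,\mathbf{f}_2)$ via \eqref{eqn_distribution_system_models}, so the infimum over $\pi_{\mathbf{f}}$ in \eqref{eqn_distance_between_distributions_system_models} is no larger than the corresponding infimum over $\pi_\theta$. Choosing the product (independent) coupling $\pi_\theta=\mathbf{f}_{\theta_1}\otimes\mathbf{f}_{\theta_2}$, taking the $q$-th power of the displayed inequality, and integrating against $\pi_\theta$ gives
\begin{align*}
\mathrm{dist^{q}_{\Sigma_{1}, \Sigma_{2}, \delta_{g}}}
\le \mathbb{E}_{\pi_\theta}\!\left[\left(\mathrm{dist^{nom}_{\Sigma_{1}, \Sigma_{2}}}+\mathbf{L}_1\|\theta_1-\bar\theta_1\|+\mathbf{L}_2\|\theta_2-\bar\theta_2\|\right)^q\right].
\end{align*}
Now apply Minkowski's inequality in $\mathcal{L}_q(\pi_\theta)$ to the sum inside, treating $\mathrm{dist^{nom}_{\Sigma_{1}, \Sigma_{2}}}$ as a (deterministic) constant whose $\mathcal{L}_q$ norm is itself, and the two random terms as functions with $\mathcal{L}_q$ norms $\mathbf{L}_i(\mathbb{E}[\|\theta_i-\bar\theta_i\|^q])^{1/q}$ for $i=1,2$; raising the resulting Minkowski bound back to the power $q$ delivers \eqref{eqn_theorem_moment_upper_bound}.

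The main obstacle is really just making the coupling lift rigorous: verifying that $(\Phi_1\times\Phi_2)_{\#}\pi_\theta$ is indeed an admissible coupling in $\Pi(\mathbf{f}_1,\mathbf{f}_2)$ (which follows from \eqref{eqn_pushforward_measure} applied to each marginal) and that the change of variables is valid under the measurability of $\Phi_i$ guaranteed by Assumption \ref{assume_theta_continuity}. Beyond that, everything reduces to elementary triangle-inequality bookkeeping and a one-line Minkowski estimate, so no substantial technical difficulty is anticipated.
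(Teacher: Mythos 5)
Your proposal is correct and follows essentially the same route as the paper's proof: the triangle inequality through the nominal models $\bar\Sigma_i=\Phi_i(\bar\theta_i)$, the Lipschitz bound with $\theta_i'=\bar\theta_i$, the lift of a parameter-level coupling to a system-level one via $(\Phi_1\times\Phi_2)_{\#}$, and Minkowski's inequality in $\mathcal{L}_q$ before raising back to the power $q$. Your explicit specialization to the product coupling is an immaterial variant of the paper's step of taking the infimum over couplings, since the resulting bound depends only on the marginal moments.
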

\begin{proof}
We know that $\Phi_{i}(\bar{\theta}_{i}) = \Sigma_{i}(\bar{\theta}_{i}) = \bar{\Sigma}_{i}$ for both systems $\Sigma_{i}$ with $i \in \{1, 2\}$. Apply the triangle inequality associated with the gap metric to see
\begin{align*}
\delta_{g} 
(\Phi_{1}(\theta_{1}), \Phi_{2}(\theta_{2}))
&\leq 
\delta_{g}(\bar{\Sigma}_{1}, \bar{\Sigma}_{2})
+ 
\sum^{2}_{i = 1}
\delta_{g}(\Phi_{i}(\theta_{i}), \bar{\Sigma}_{i}) \\
&=
\mathrm{dist^{nom}_{\Sigma_{1}, \Sigma_{2}}} 
+
\sum^{2}_{i = 1}
\delta_{g}(\Phi_{i}(\theta_{i}), \Phi_{i}(\bar{\theta}_{i})).
\end{align*}
Applying the Lipschitz bound in  \eqref{eqn_gap_Lipschitz_assume}, we see that
\begin{align}
\label{eqn_theorem_moment_upper_bound_interim_step_1}
\delta_{g} 
(\Phi_{1}(\theta_{1}), \Phi_{2}(\theta_{2}))
&\leq 
\mathrm{dist^{nom}_{\Sigma_{1}, \Sigma_{2}}}  
+ 
\sum^{2}_{i = 1}
\mathbf{L}_{i} 
\norm{\theta_{i} - \bar{\theta}_{i}}.
\end{align}
Let us define $Z := \delta_{g}\left(\Phi_{1}(\theta_{1}), \Phi_{2}(\theta_{2})\right)$, and 
$U_{i} := \norm{\theta_{i}-\bar{\theta}_{i}}$. Let $\pi_{\theta} \in \Pi(\mathbf{f}_{\theta_{1}}, \mathbf{f}_{\theta_{2}})$ be any joint distribution of $(\theta_{1},\theta_{2})$. Then, the corresponding joint distribution of the systems $\pi_{\mathbf{f}} \in \Pi(\mathbf{f}_{1}, \mathbf{f}_{2})$ and $\pi_{\theta} \in \Pi(\mathbf{f}_{\theta_{1}}, \mathbf{f}_{\theta_{2}})$ are related as
\begin{align}
\label{eqn_joint_distributions_relation}
\pi_{\mathbf{f}} = (\Phi_{1} \times \Phi_{2})_{\#} \pi_{\theta}.     
\end{align}
As a result of \eqref{eqn_joint_distributions_relation}, we see that
\begin{align}
\label{eqn_Lq_norm_equivalents}
&\int_{\mathrm{supp}(\mathbf{f}_{1}) \times \mathrm{supp}(\mathbf{f}_{2})}
\delta_{g}(\Sigma_{1}, \Sigma_{2})^{q} \pi_{\mathbf{f}}(d \Sigma_{1}, d \Sigma_{2}) \nonumber \\
=
&\int_{\mathrm{supp}(\mathbf{f}_{\theta_{1}}) \times \mathrm{supp}(\mathbf{f}_{\theta_{2}})}
Z^{q} \, \pi_{\theta}(d \theta_{1}, d \theta_{2}).
\end{align}
Having said that, taking $\mathcal{L}_{q}$ norm of \eqref{eqn_theorem_moment_upper_bound_interim_step_1} on both sides with respect to the joint distribution $\pi_{\theta}$ and applying Minkowski’s inequality along with the fact that  $\mathrm{dist^{nom}_{\Sigma_{1}, \Sigma_{2}}}$ is constant, its $\mathcal{L}_{q}$ norm is $\norm{\mathrm{dist^{nom}_{\Sigma_{1}, \Sigma_{2}}}}_{\mathcal{L}_{q}} = \mathrm{dist^{nom}_{\Sigma_{1}, \Sigma_{2}}}$, we get pointwise,
\begin{align}
\label{eqn_theorem_moment_upper_bound_interim_step_2}
\norm{Z}_{\mathcal{L}_{q}}
\leq \mathrm{dist^{nom}_{\Sigma_{1}, \Sigma_{2}}} 
+ 
\sum^{2}_{i = 1}
\mathbf{L}_{i} \norm{U_{i}}_{\mathcal{L}_{q}}.
\end{align}
Using the definition of $\mathcal{L}_{q}$ norm, we see that
\begin{align}
\label{eqn_theorem_moment_upper_bound_interim_step_3}
\norm{U_{i}}_{\mathcal{L}_{q}}
=
\left(
\mathbb{E}_{\pi_{\theta}}[U_{i}^{q}]
\right)^{1/q}
=
\left(
\mathbb{E}_{\mathbf{f}_{\theta_{i}}}
\left[
\norm{\theta_{i}-\bar{\theta}_{i}}^q
\right]
\right)^{1/q}.    
\end{align}
Using \eqref{eqn_joint_distributions_relation} and 
\eqref{eqn_theorem_moment_upper_bound_interim_step_3} in \eqref{eqn_theorem_moment_upper_bound_interim_step_2}, we get
\begin{align*}
\left(\mathbb{E}_{\pi_{\mathbf{f}}}[Z^q]\right)^{1/q} \le \mathrm{dist^{nom}_{\Sigma_{1}, \Sigma_{2}}} 
+ 
\sum^{2}_{i = 1}
\mathbf{L}_{i} \left(
\mathbb{E}_{\mathbf{f}_{\theta_{i}}}
\left[
\norm{\theta_{i}-\bar{\theta}_{i}}^q
\right]
\right)^{1/q}.
\end{align*}
Taking the infimum over all joint distributions $\pi_{\mathbf{f}} \in \Pi(\mathbf{f}_{1}, \mathbf{f}_{2})$ and subsequently using \eqref{eqn_Lq_norm_equivalents} and the definition \eqref{eqn_distance_between_distributions_system_models}, we get
\begin{align}
\label{eqn_theorem_moment_upper_bound_interim_step_4}
\left(\mathrm{dist^{q}_{\Sigma_{1}, \Sigma_{2}, \delta_{g}}}\right)^{\frac{1}{q}}
\leq \mathrm{dist^{nom}_{\Sigma_{1}, \Sigma_{2}}} 
+ 
\sum^{2}_{i = 1}
\mathbf{L}_{i} \left(
\mathbb{E}_{\mathbf{f}_{\theta_{i}}}
\left[
\norm{\theta_{i}-\bar{\theta}_{i}}^q
\right]
\right)^{1/q}.
\end{align}
Since $x\mapsto x^q$ is increasing on $\mathbb{R}_{\geq 0}$ for $q \geq 1$, we raise both sides of \eqref{eqn_theorem_moment_upper_bound_interim_step_4} to the power $q \geq 1$ to get \eqref{eqn_theorem_moment_upper_bound} and the proof is complete.
\end{proof}

In the following proposition, we will give a lower bound using the nominal gap distance and the deviation from nominal model. The reasoning will be very similar to that of Theorem  \ref{theorem_triangle_lower_bound} in the frequency domain meaning that we have an analogous result in the time domain given by the following proposition.

\begin{proposition}\label{proposition_gap_lower_bound_using_nominal_gap}
Given $\Sigma_{i}(\theta_{i}) \sim \mathbf{f}_{i}$ and the nominal models $\bar{\Sigma}_{i}$ for both systems $i \in \{1, 2\}$, consider a joint distribution $\pi_{\mathbf{f}} \in \Pi(\mathbf{f}_{1}, \mathbf{f}_{2})$. Then, $\forall q \geq 1$, we see that
\begin{align}
\label{eqn_proposition_gap_lower_bound_using_nominal_gap}
\mathrm{dist}_{\Sigma_{1}, \Sigma_{2}, \delta_{g}}^{q} 
&\geq 
\inf_{\pi_{\mathbf{f}} \in \Pi(\mathbf{f}_{1}, \mathbf{f}_{2})}
\left(
\mathrm{dist^{nom}_{\Sigma_{1}, \Sigma_{2}}} - 
\sum^{2}_{i = 1} \mathbb{E}_{\pi_{\mathbf{f}}}[\mathrm{Gap}_{i}(\theta_{i})]
\right)^{q}_{+}.
\end{align}
\end{proposition}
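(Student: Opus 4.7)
The plan is to mirror the strategy of Theorem \ref{theorem_triangle_lower_bound} in the time domain, replacing the chordal distance $\mathrm{d_{chord}}$ on the Riemann sphere with the gap metric $\delta_g$ on the space of plants, and replacing the nominal-deviation terms by $\mathrm{Gap}_{i}(\theta_{i})$ as defined in \eqref{eqn_gap_between_models}. The main ingredients are (i) the triangle inequality for the gap metric, (ii) the $(\cdot)_+$ operation to keep the right-hand side non-negative, and (iii) Jensen's inequality applied to the convex map $x \mapsto (x)_+^q$ for $q \geq 1$.

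First, fix any joint distribution $\pi_{\mathbf{f}} \in \Pi(\mathbf{f}_{1}, \mathbf{f}_{2})$ and let $(\Sigma_1(\theta_1), \Sigma_2(\theta_2)) \sim \pi_{\mathbf{f}}$. Since $\delta_g$ is a metric on the space of linear systems (see the definition in \eqref{eqn_gap_metric_definition}), the reverse triangle inequality through the nominal models $\bar\Sigma_1, \bar\Sigma_2$ yields
\begin{align*}
\delta_g(\Sigma_1(\theta_1), \Sigma_2(\theta_2))
&\geq
\delta_g(\bar\Sigma_1, \bar\Sigma_2)
- \delta_g(\Sigma_1(\theta_1), \bar\Sigma_1)
- \delta_g(\Sigma_2(\theta_2), \bar\Sigma_2) \\
&=
\mathrm{dist^{nom}_{\Sigma_{1}, \Sigma_{2}}}
- \sum_{i=1}^{2} \mathrm{Gap}_i(\theta_i),
\end{align*}
where I used \eqref{eqn_distance_between_nominal_models} and \eqref{eqn_gap_between_models}. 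Since the left-hand side is non-negative, I can take the positive part on the right without weakening the inequality, and then raise both sides to the power $q \geq 1$ (a monotone operation on $[0,\infty)$), giving a pointwise lower bound on $\delta_g(\Sigma_1(\theta_1), \Sigma_2(\theta_2))^q$.

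Next, I integrate against $\pi_{\mathbf{f}}$. The left-hand side becomes the object under the infimum in \eqref{eqn_distance_between_distributions_system_models}. For the right-hand side, I observe that $\phi(x) := (x)_+^q$ is convex on $\mathbb{R}$ for $q \geq 1$, so Jensen's inequality gives
\[
\mathbb{E}_{\pi_{\mathbf{f}}} \bigl[(\mathrm{dist^{nom}_{\Sigma_{1}, \Sigma_{2}}} - \textstyle\sum_i \mathrm{Gap}_i(\theta_i))_+^q\bigr]
\geq
\bigl(\mathrm{dist^{nom}_{\Sigma_{1}, \Sigma_{2}}} - \textstyle\sum_i \mathbb{E}_{\pi_{\mathbf{f}}}[\mathrm{Gap}_i(\theta_i)]\bigr)_+^q,
\]
since $\mathrm{dist^{nom}_{\Sigma_{1}, \Sigma_{2}}}$ is a deterministic constant that passes through the expectation. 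Finally, taking the infimum over $\pi_{\mathbf{f}} \in \Pi(\mathbf{f}_1, \mathbf{f}_2)$ on the left-hand side yields $\mathrm{dist}_{\Sigma_{1}, \Sigma_{2}, \delta_{g}}^{q}$, and the infimum on the right-hand side preserves the inequality, establishing \eqref{eqn_proposition_gap_lower_bound_using_nominal_gap}.

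The only genuinely delicate point is the applicability of the triangle inequality for $\delta_g$ when the plants $\Sigma_i(\theta_i)$ are stochastic realizations—this is handled because the triangle inequality is a pointwise-in-$\theta$ property of the deterministic gap metric and therefore lifts immediately to random realizations. Measurability of $\theta_i \mapsto \mathrm{Gap}_i(\theta_i)$, needed for the expectations to make sense, is guaranteed by Assumption \ref{assume_theta_continuity} (Fréchet differentiability of $\theta_i \mapsto G_i(\theta_i)$ in the $\mathcal{H}_\infty$ sense implies continuity of the gap in $\theta_i$). Everything else is a structural replay of the argument in Theorem \ref{theorem_triangle_lower_bound}, so I anticipate no significant obstacle.
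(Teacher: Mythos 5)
Your proposal is correct and follows essentially the same route as the paper's own proof: triangle inequality for the gap metric through the nominal models, positive part, raising to the power $q$, expectation under $\pi_{\mathbf{f}}$, Jensen's inequality for the convex map $x \mapsto (x)_{+}^{q}$, and finally the infimum over joint distributions. Your added remarks on measurability of $\theta_i \mapsto \mathrm{Gap}_i(\theta_i)$ and the pointwise nature of the triangle inequality are sensible clarifications but do not change the argument.
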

\begin{proof}
Fix any joint distribution $\pi_{\mathbf{f}} \in \Pi(\mathbf{f}_{1}, \mathbf{f}_{2})$. For any $\Sigma_{i}(\theta_{i}) \sim \mathbf{f}_{i}$ for $i \in \{1, 2\}$, we apply the triangle inequality to the gap metric to see that
\begin{align*}
\delta_{g}(\Sigma_{1}, \Sigma_{2}) 
\geq 
\left(
\mathrm{dist^{nom}_{\Sigma_{1}, \Sigma_{2}}} - 
\sum^{2}_{i = 1} \mathrm{Gap}_{i}(\theta_{i})
\right)_{+}.
\end{align*}
Then, given $q \geq 1$, we see that
\begin{align*}
\delta_{g}(\Sigma_{1}, \Sigma_{2})^{q} 
\geq 
\left(
\mathrm{dist^{nom}_{\Sigma_{1}, \Sigma_{2}}} - 
\sum^{2}_{i = 1} \mathrm{Gap}_{i}(\theta_{i})
\right)^{q}_{+}.
\end{align*}
Taking expectation on both sides with respect to the joint distribution $\pi_{\mathbf{f}}$, we see that
\begin{align*}
&\int_{\mathrm{supp}(\mathbf{f}_{1}) \times \mathrm{supp}(\mathbf{f}_{2})}  \delta_{g}(\Sigma_{1}, \Sigma_{2})^{q} \, \pi_{\mathbf{f}}(d \Sigma_{1}, d \Sigma_{2}) \\ 
&\geq
\mathbb{E}_{\pi_{\mathbf{f}}} 
\left[
\left(
\mathrm{dist^{nom}_{\Sigma_{1}, \Sigma_{2}}} - 
\sum^{2}_{i = 1} \mathrm{Gap}_{i}(\theta_{i})
\right)^{q}_{+}
\right] \\
&\geq
\left(
\mathrm{dist^{nom}_{\Sigma_{1}, \Sigma_{2}}} - 
\sum^{2}_{i = 1} 
\mathbb{E}_{\pi_{\mathbf{f}}} 
\left[
\mathrm{Gap}_{i}(\theta_{i})
\right]
\right)^{q}_{+},
\end{align*}
where we applied the Jensen's inequality (for the convex function $f(x) = (x)^{q}_{+}$ with $q \geq 1$, $\mathbb{E}[(x)^{q}_{+}] \geq (\mathbb{E}[x])^{q}_{+}$) to get the second inequality. Taking infimum over all joint distributions $\pi_{\mathbf{f}} \in \Pi(\mathbf{f}_{1}, \mathbf{f}_{2})$ on both sides yields the desired result \eqref{eqn_proposition_gap_lower_bound_using_nominal_gap} and the proof is complete. 
\end{proof}

\subsection{Numerical Demonstrations}
We considered the following state space models $(A_1, B_1, C_1, D)$ and $(A_2, B_2, C_2, D)$ as the nominal models for two different dynamical systems:
\begin{align} 
A_{1} 
&= 
\begin{bmatrix}
0 & 1 \\ -2 & -0.5   
\end{bmatrix},
B_{1} 
= 
\begin{bmatrix}
0 \\ 1    
\end{bmatrix},
C_{1}
=
\begin{bmatrix}
1 & 0    
\end{bmatrix},
D 
= 
0 \\
A_{2} 
&= 
\begin{bmatrix}
-3.2178 & 1.2354 \\ -1.7812 & -2.6507   
\end{bmatrix},
B_{2} 
= 
\begin{bmatrix}
0 \\ 1    
\end{bmatrix},
C_{2}
=
\begin{bmatrix}
1 & 0    
\end{bmatrix}. 
\end{align}
For generating the perturbed models, we choose $d = 4$, $\theta_{1} \sim \mathcal{N}(0.01, 0.01^2)$, and $\theta_{2} \sim \mathcal{N}(0.05, 0.05^2)$. Totally, $N = 50$ samples of perturbed models for each of the two dynamical systems were generated along the lines of \eqref{eqn_lemma_affine_parametric_plant_matrices}. The proposed distance measure $\mathrm{dist}_{\Sigma_{1}, \Sigma_{2}, \delta_{g}}$ between each of the models for both the systems was computed using \eqref{eqn_distance_between_distributions_system_models} using linear programming based approach with the transport cost being the gap metric which was computed using the \emph{gapmetric} command of Matlab. The upper bound and lower bounds for the proposed distance measure were computed using Proposition \ref{proposition_gap_diameter_upper_bound} and Proposition  \ref{proposition_gap_lower_bound_using_nominal_gap} respectively. The gap between the nominal models $\mathrm{dist^{nom}_{\Sigma_{1}, \Sigma_{2}}}$ was found to be $0.7731$. We estimated the proposed distance measure $\mathrm{dist}_{\Sigma_{1}, \Sigma_{2}, \delta_{g}} = 0.7765$, and its lower and upper bounds as $0.6561$ and $0.8252$ respectively.   

\subsection{Summary} 
The proposed time domain distance given by \eqref{eqn_distance_between_distributions_system_models} facilitates a simple linear programming based computation and also is devoid of additional supremum over frequency operation. We do not claim here that the upper and lower bounds given by Proposition \ref{proposition_gap_diameter_upper_bound} and Proposition  \ref{proposition_gap_lower_bound_using_nominal_gap} respectively are tight. In the next section, we will show that the frequency domain distance proposed in Section \ref{sec:frequency_domain} never exceeds the time domain distance proposed in this section. 

\section{Comparing Time Domain \& Frequency Domain Distance Measures} \label{sec_comparison}
It is well known from \cite{vinnicombe_tac_1993, vinnicombe2001uncertainty} that $\nu$-gap can never exceed the gap metric for linear systems. Along those lines, we will prove in this section that the proposed frequency domain distance measure in Section \ref{sec:frequency_domain} between two stochastic LTI dynamical system never exceeds its time domain distance measure counterpart proposed in Section \ref{sec:time_domain}. We formalise this observation using a comparison theorem followed by a simulation example based demonstration to corroborate our findings. \\

In the frequency domain setting, we know that the distribution $\mathbb{P}_{R_{i}(\omega)}$ governing the uncertainty of system $i$ with $i \in \{1, 2\}$ on the boundary of the Riemann sphere is related to the distribution $\mathbf{f}_{\theta_{i}}$ of the random parameter $\theta_{i}$ through \eqref{eqn_distribution_pushfwd_Riemannsphere}. Analogously, in the time domain setting, the distribution $\mathbf{f}_{i}$ of plant models of system $i \in \{1, 2\}$ and the corresponding distribution $\mathbf{f}_{\theta_{i}}$ of the random parameter $\theta_{i}$ satisfy \eqref{eqn_distribution_system_models} and \eqref{eqn_pushforward_measure} with $\Phi_{i}$ denoting the measurable map from the parameter space to the state space of system plants as described earlier in Section \ref{sec:time_domain}. We now define a transfer function mapping which when given a state space model, returns a real rational transfer function. That is, we define the transfer function mapping $\mathbf{TF}: \Phi_{i} \rightarrow \mathbf{R}\mathcal{L}_{\infty}$ such that $\mathbf{TF}(\Sigma_{i}(\theta_{i})) = P_{i}(\theta_{i}; s)$. We also need a mapping $\Psi^{\mathfrak{R}}_{\omega}: \Phi_{i} \rightarrow \partial \mathfrak{R}$ that takes the state space model and maps it to the Riemann sphere after realising a real rational transfer function and subsequently evaluating it at a particular frequency $\omega$ and applying the inverse stereographic projection operation. Such a mapping can be defined using composition as 
\begin{align}
\Psi^{\mathfrak{R}}_{\omega} 
=
\phi^{-1} \circ \mathcal{E}_{\omega} \circ \mathbf{TF}.     
\end{align}

\begin{assumption} \label{assume_mapping_continuity}
The mapping $\Psi^{\mathfrak{R}}_{\omega}: \Phi_{i} \rightarrow \partial \mathfrak{R}$ is continuous.
\end{assumption}
Before we proceed ahead with the comparison theorem, we will first prove a lemma describing how the joint distributions involved in optimal transport defined in the state space and in the Riemann sphere are related to each other and this will be useful in the proof of the comparison theorem to be presented later in this manuscript.
\begin{lemma}
\label{lemma_joint distribution_transport}
For every frequency $\omega \in \Omega$, the corresponding joint distribution $\pi_{\omega} \in \Pi(\mathbb{P}_{R_{1}(\omega)}, \mathbb{P}_{R_{2}(\omega)})$ on the Riemann sphere with marginals $\mathbb{P}_{R_{1}(\omega)}$ and $\mathbb{P}_{R_{2}(\omega)}$ is related to the joint distribution $\pi_{\mathbf{f}} \in \Pi(\mathbf{f}_{1}, \mathbf{f}_{2})$ defined in the space of systems with marginals $\mathbf{f}_{1}$ and $\mathbf{f}_{2}$ respectively as 
\begin{align}
\label{eqn_joint_distribution_relation} 
\pi_{\omega}
:=
\left(
\Psi^{\mathfrak{R}}_{\omega} \times \Psi^{\mathfrak{R}}_{\omega}
\right)_{\#} \pi_{\mathbf{f}}.
\end{align}
\end{lemma}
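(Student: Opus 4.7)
The plan is to verify the claimed identity by checking that the candidate measure $(\Psi^{\mathfrak{R}}_{\omega}\times\Psi^{\mathfrak{R}}_{\omega})_{\#}\pi_{\mathbf{f}}$ is (i) a well-defined Borel probability measure on $\partial\mathfrak{R}\times\partial\mathfrak{R}$, and (ii) actually lies in $\Pi(\mathbb{P}_{R_{1}(\omega)},\mathbb{P}_{R_{2}(\omega)})$, i.e., its marginals coincide with the prescribed projected distributions. The heart of the argument is an application of the functoriality of the push-forward operation, $(g\circ f)_{\#} = g_{\#}\circ f_{\#}$, combined with a diagram-chase identifying two different routes from the parameter space $\Theta_{i}$ to the Riemann sphere.

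First I would invoke Assumption \ref{assume_mapping_continuity}: continuity of $\Psi^{\mathfrak{R}}_{\omega}$ yields Borel measurability, hence the product map $\Psi^{\mathfrak{R}}_{\omega}\times\Psi^{\mathfrak{R}}_{\omega}$ is measurable on $\mathrm{supp}(\mathbf{f}_{1})\times\mathrm{supp}(\mathbf{f}_{2})$, so Definition \ref{define_pushforward} legitimately produces a Borel probability measure on $\partial\mathfrak{R}\times\partial\mathfrak{R}$. Next, for any Borel set $B_{1}\subseteq\partial\mathfrak{R}$, I would compute the first marginal by taking $B_{1}\times\partial\mathfrak{R}$ and applying \eqref{eqn_pushforward_definition}:
\begin{align*}
\bigl[(\Psi^{\mathfrak{R}}_{\omega}\times\Psi^{\mathfrak{R}}_{\omega})_{\#}\pi_{\mathbf{f}}\bigr](B_{1}\times\partial\mathfrak{R})
&= \pi_{\mathbf{f}}\bigl((\Psi^{\mathfrak{R}}_{\omega})^{-1}(B_{1})\times\mathrm{supp}(\mathbf{f}_{2})\bigr) \\
&= \mathbf{f}_{1}\bigl((\Psi^{\mathfrak{R}}_{\omega})^{-1}(B_{1})\bigr) = (\Psi^{\mathfrak{R}}_{\omega})_{\#}\mathbf{f}_{1}(B_{1}),
\end{align*}
where the second equality uses the fact that $\pi_{\mathbf{f}}$ has first marginal $\mathbf{f}_{1}$.

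The decisive step is then to show $(\Psi^{\mathfrak{R}}_{\omega})_{\#}\mathbf{f}_{i}=\mathbb{P}_{R_{i}(\omega)}$ for $i=1,2$. I would first unravel the composition using $\mathbf{f}_{i}=(\Phi_{i})_{\#}\mathbf{f}_{\theta_{i}}$ from \eqref{eqn_distribution_system_models} and the functoriality of push-forward to obtain $(\Psi^{\mathfrak{R}}_{\omega})_{\#}\mathbf{f}_{i}=(\Psi^{\mathfrak{R}}_{\omega}\circ\Phi_{i})_{\#}\mathbf{f}_{\theta_{i}}$. The key commuting-diagram observation is that
\begin{align*}
\Psi^{\mathfrak{R}}_{\omega}\circ\Phi_{i}
= \phi^{-1}\circ\mathcal{E}_{\omega}\circ\mathbf{TF}\circ\Phi_{i}
= \phi^{-1}\circ\mathcal{E}_{\omega}\circ P_{i}
= \phi^{-1}\circ\Phi^{\mathbb{C}}_{i}
= \Phi^{\mathfrak{R}}_{i},
\end{align*}
because $\mathbf{TF}\circ\Phi_{i}$ sends $\theta_{i}$ to the transfer function $P_{i}(\theta_{i};s)$ by the very construction of $P_{i}$ in \eqref{eqn_stochastic_transfer_function}, and the remaining identifications follow from the definitions of $\Phi^{\mathbb{C}}_{i}$ and $\Phi^{\mathfrak{R}}_{i}$ in \eqref{eqn_distribution_pushfwd_complexplane} and \eqref{eqn_distribution_pushfwd_Riemannsphere}. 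Combining this with \eqref{eqn_distribution_pushfwd_Riemannsphere} yields $(\Psi^{\mathfrak{R}}_{\omega})_{\#}\mathbf{f}_{i}=(\Phi^{\mathfrak{R}}_{i})_{\#}\mathbf{f}_{\theta_{i}}=\mathbb{P}_{R_{i}(\omega)}$. The second marginal is treated symmetrically.

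I do not anticipate a genuine obstacle here, since the lemma is essentially a bookkeeping fact about the naturality of the push-forward with respect to compositions of measurable maps. The only subtle point to be careful about is ensuring all maps in the chain ($\mathbf{TF}$, $\mathcal{E}_{\omega}$, $\phi^{-1}$, $\Phi_{i}$) are measurable between the appropriate $\sigma$-algebras so that the equality $(\Psi^{\mathfrak{R}}_{\omega})_{\#}\circ(\Phi_{i})_{\#}=(\Psi^{\mathfrak{R}}_{\omega}\circ\Phi_{i})_{\#}$ is rigorously valid; Assumption \ref{assume_mapping_continuity}, together with the standing smoothness of $\Phi_{i}$ implied by Assumption \ref{assume_theta_continuity}, supplies this.
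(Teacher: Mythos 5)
Your proposal is correct and follows essentially the same route as the paper's proof: both compute the marginals of $\left(\Psi^{\mathfrak{R}}_{\omega} \times \Psi^{\mathfrak{R}}_{\omega}\right)_{\#} \pi_{\mathbf{f}}$ by evaluating on rectangles of the form $\mathbf{B}_{1}\times\partial\mathfrak{R}$ and using that $\pi_{\mathbf{f}}$ has marginals $\mathbf{f}_{1},\mathbf{f}_{2}$. The only difference is that the paper asserts the final identification $\left(\Psi^{\mathfrak{R}}_{\omega}\right)_{\#}\mathbf{f}_{i}=\mathbb{P}_{R_{i}(\omega)}$ without comment, whereas you justify it explicitly via functoriality of the push-forward and the factorisation $\Psi^{\mathfrak{R}}_{\omega}\circ\Phi_{i}=\Phi^{\mathfrak{R}}_{i}$, which is a welcome extra step but not a different argument.
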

\begin{proof}
From the definition in \eqref{eqn_pushforward_definition}, we observe that for $\mathbf{A}_1 \times \mathbf{A}_{2} \subseteq \partial\mathfrak{R} \times \partial\mathfrak{R}$, 
\begin{align*}
\pi_{\omega}(\mathbf{A}_1 \times \mathbf{A}_{2})
&=
\pi_{\mathbf{f}}
\left(
\left(
\Psi^{\mathfrak{R}}_{\omega} \times \Psi^{\mathfrak{R}}_{\omega}
\right)^{-1}(\mathbf{A}_1 \times \mathbf{A}_{2})
\right) \\
&=
\pi_{\mathbf{f}}
\left(
\left(
\Psi^{\mathfrak{R}}_{\omega} 
\right)^{-1}
(\mathbf{A}_1)
\times 
\left(
\Psi^{\mathfrak{R}}_{\omega} 
\right)^{-1}
(\mathbf{A}_2)
\right)
\end{align*}
Now let $\mathbf{A}_1 = \mathbf{A} \subset \partial\mathfrak{R}$ and $\mathbf{A}_2 = \partial\mathfrak{R}$. Then,
\begin{align*}
\pi_{\omega}(\mathbf{A} \times \partial\mathfrak{R})
&=    
\pi_{\mathbf{f}}
\left(
\left(
\Psi^{\mathfrak{R}}_{\omega} 
\right)^{-1}
(\mathbf{A})
\times 
\left(
\Psi^{\mathfrak{R}}_{\omega} 
\right)^{-1}
(\partial\mathfrak{R})
\right) \\
&=    
\pi_{\mathbf{f}}
\left(
\left(
\Psi^{\mathfrak{R}}_{\omega} 
\right)^{-1}
(\mathbf{A})
\times 
\Phi_{2}
\right) \\
&=
\mathbf{f}_{1}
\left(
\left(
\Psi^{\mathfrak{R}}_{\omega} 
\right)^{-1}
(\mathbf{A})
\right) \\
&=
\left(
\Psi^{\mathfrak{R}}_{\omega} 
\right)_{\#}
\mathbf{f}_{1}
(\mathbf{A}) \\
&=
\mathbb{P}_{R_{1}(\omega)}(\mathbf{A}).
\end{align*}
By similar arguments, we will also get $\pi_{\omega}(\partial\mathfrak{R} \times \mathbf{A}) = \mathbb{P}_{R_{2}(\omega)}(\mathbf{A})$. Since we get the corresponding marginals, the result \eqref{eqn_joint_distribution_relation} follows immediately.
\end{proof}
In the following lemma, we will prove a similar result connecting the support sets of distributions living on Riemann sphere and the corresponding supports sets of distributions in the space of systems.
\begin{lemma}
\label{lemma_support-pushforward}
Let $\mathbf{f}_{i}$ be the distribution governing the system $i \in \{1, 2\}$ given by \eqref{eqn_perturbed_dynamics_system} in the state space. Since $\mathbb{P}_{R_{i}(\omega)} = \left(\Psi^{\mathfrak{R}}_{\omega} \right)_{\#} \mathbf{f}_{i}$ at frequency $\omega \geq 0$ by \eqref{eqn_joint_distribution_relation}, we get 
\begin{equation}\label{eq:supp-closure}
\mathrm{supp}\left(\mathbb{P}_{R_{i}(\omega)}\right) 
=
\mathcal{R}_{P_{1}(\omega)}
=
\overline{\,\Psi^{\mathfrak{R}}_{\omega}\bigl(\mathrm{supp}\left(\mathbf{f}_{i}\right)\bigr)\,}.
\end{equation}
Additionally, if $\mathrm{supp}\left(\mathbf{f}_{i}\right)$ is compact, then $\Psi^{\mathfrak{R}}_{\omega}(\mathrm{supp}\left(\mathbf{f}_{i}\right))$ is also compact and
so 
\begin{equation}
\label{eq:supp-equality-compact}
\mathrm{supp}\left(\mathbb{P}_{R_{i}(\omega)}\right) \ =\ \Psi^{\mathfrak{R}}_{\omega}\bigl(\mathrm{supp}\left(\mathbf{f}_{i}\right)\bigr).
\end{equation}
On the other hand, given distributions $\mathbb{P}_{R_{i}(\omega)}$ on $\partial\mathfrak{R}$ and $\mathbf{f}_{i}$ on the state space satisfying $(\Psi^{\mathfrak{R}}_{\omega})_{\#}\mathbf{f}_{i}=\mathbb{P}_{R_{i}(\omega)}$, the following inclusion for their support sets always holds:
\begin{equation}\label{eq:backward-inclusion}
\mathrm{supp}\left(\mathbf{f}_{i}\right) 
\subseteq \left(\Psi^{\mathfrak{R}}_{\omega}\right)^{-1}
\left(
\mathrm{supp}\left(\mathbb{P}_{R_{i}(\omega)}\right)
\right).
\end{equation}
\end{lemma}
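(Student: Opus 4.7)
My plan for this lemma is to derive all three claims from the standard characterization of the support of a push-forward measure under a continuous map, leveraging Assumption \ref{assume_mapping_continuity} which asserts continuity of $\Psi^{\mathfrak{R}}_{\omega}$. The main work lies in proving \eqref{eq:supp-closure}; the other two claims fall out as immediate corollaries.

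For \eqref{eq:supp-closure}, I would establish the two inclusions separately by arguing with open neighborhoods. For the inclusion $\overline{\Psi^{\mathfrak{R}}_{\omega}(\mathrm{supp}(\mathbf{f}_i))} \subseteq \mathrm{supp}(\mathbb{P}_{R_i(\omega)})$, pick any $x \in \mathrm{supp}(\mathbf{f}_i)$ and any open neighborhood $V$ of $\Psi^{\mathfrak{R}}_\omega(x)$ in $\partial \mathfrak{R}$. By continuity, $(\Psi^{\mathfrak{R}}_\omega)^{-1}(V)$ is an open neighborhood of $x$, so the defining property of the support yields $\mathbf{f}_i((\Psi^{\mathfrak{R}}_\omega)^{-1}(V)) > 0$, and by the push-forward definition this equals $\mathbb{P}_{R_i(\omega)}(V)$. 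Hence $\Psi^{\mathfrak{R}}_\omega(x) \in \mathrm{supp}(\mathbb{P}_{R_i(\omega)})$, and since supports are closed in $\partial \mathfrak{R}$, taking closures preserves the inclusion. For the reverse inclusion, I argue contrapositively: if $y \in \partial\mathfrak{R}$ lies outside $\overline{\Psi^{\mathfrak{R}}_\omega(\mathrm{supp}(\mathbf{f}_i))}$, then there is an open neighborhood $V$ of $y$ disjoint from $\Psi^{\mathfrak{R}}_\omega(\mathrm{supp}(\mathbf{f}_i))$, so $(\Psi^{\mathfrak{R}}_\omega)^{-1}(V)$ is disjoint from $\mathrm{supp}(\mathbf{f}_i)$; this gives $\mathbf{f}_i((\Psi^{\mathfrak{R}}_\omega)^{-1}(V)) = 0$ and therefore $\mathbb{P}_{R_i(\omega)}(V) = 0$, so $y \notin \mathrm{supp}(\mathbb{P}_{R_i(\omega)})$.

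For the compact case \eqref{eq:supp-equality-compact}, the argument is essentially topological: the continuous image of a compact set is compact, and compact subsets of the Hausdorff space $\partial\mathfrak{R}$ are closed, so $\overline{\Psi^{\mathfrak{R}}_\omega(\mathrm{supp}(\mathbf{f}_i))} = \Psi^{\mathfrak{R}}_\omega(\mathrm{supp}(\mathbf{f}_i))$ and the closure operation is redundant. The backward inclusion \eqref{eq:backward-inclusion} is then immediate from the forward inclusion proven above: every $x \in \mathrm{supp}(\mathbf{f}_i)$ satisfies $\Psi^{\mathfrak{R}}_\omega(x) \in \mathrm{supp}(\mathbb{P}_{R_i(\omega)})$, which by the definition of preimage means $x \in (\Psi^{\mathfrak{R}}_\omega)^{-1}(\mathrm{supp}(\mathbb{P}_{R_i(\omega)}))$.

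I do not anticipate a substantive obstacle here, as this is a standard topological result about push-forwards of Borel probability measures under continuous maps. The only subtlety to watch is consistent use of the defining property of the support (the smallest closed set whose complement has measure zero, equivalently the set of points every open neighborhood of which has positive measure), and to note that continuity of $\Psi^{\mathfrak{R}}_\omega$ is exactly what allows open neighborhoods in $\partial\mathfrak{R}$ to be pulled back to open neighborhoods in the state space.
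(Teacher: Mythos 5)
Your proposal is correct and follows essentially the same argument as the paper: both inclusions of \eqref{eq:supp-closure} are established via open neighbourhoods and the push-forward identity, the compact case via ``continuous image of compact is compact, hence closed,'' and \eqref{eq:backward-inclusion} as an immediate consequence of the forward inclusion. The only cosmetic difference is that you pull back the neighbourhood $V$ directly via $(\Psi^{\mathfrak{R}}_{\omega})^{-1}(V)$, whereas the paper first selects a smaller neighbourhood $\mathcal{B}_{\Sigma_{i}}$ contained in that preimage; these are equivalent uses of continuity.
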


\begin{proof}
To prove the forward inclusion of \eqref{eq:supp-closure}, let $\Sigma_{i} \in \mathrm{supp}\left(\mathbf{f}_{i}\right)$ and let $r_{i}(\omega) = \Psi^{\mathfrak{R}}_{\omega}(\Sigma_{i})$. Take an open neighbourhood $\mathcal{B}_{r_{i}(\omega)} \subset \partial\mathfrak{R}$ such that $r_{i}(\omega) \in \mathcal{B}_{r_{i}(\omega)}$. By continuity of $\Psi^{\mathfrak{R}}_{\omega}$ via Assumption \ref{assume_mapping_continuity}, there exists an open neighbourhood $\mathcal{B}_{\Sigma_{i}}$ of $\Sigma_{i}$ in the state space so that $\Psi^{\mathfrak{R}}_{\omega}(\mathcal{B}_{\Sigma_{i}}) \subseteq \mathcal{B}_{r_{i}(\omega)}$. Since $\Sigma_{i} \in \mathrm{supp}\left(\mathbf{f}_{i}\right)$, we have $\mathbf{f}_{i}(\mathcal{B}_{\Sigma_{i}})>0$, hence
\begin{align*}
\mathbb{P}_{R_{i}(\omega)}(\mathcal{B}_{r_{i}(\omega)}) 
= 
\mathbf{f}_{i} 
\left(
\left(
\Psi^{\mathfrak{R}}_{\omega}
\right)^{-1}
(\mathcal{B}_{r_{i}(\omega)})
\right)
\geq 
\mathbf{f}_{i}(\mathcal{B}_{\Sigma_{i}}) 
> 
0.
\end{align*}
Therefore $r_{i}(\omega) \in \mathrm{supp}\left(\mathbb{P}_{R_{i}(\omega)}\right)$, proving $\Psi^{\mathfrak{R}}_{\omega}(\mathrm{supp}\left(\mathbf{f}_{i}\right))\subseteq \mathrm{supp}\left(\mathbb{P}_{R_{i}(\omega)}\right)$. For the reverse inclusion, let $r_{i}(\omega) \in \partial\mathfrak{R} \setminus \overline{\Psi^{\mathfrak{R}}_{\omega}(\mathrm{supp}\left(\mathbf{f}_{i}\right))}$. Then, there is an open neighbourhood $\mathcal{B}_{r_{i}(\omega)}$ of $r_{i}(\omega)$ such that $\mathcal{B}_{r_{i}(\omega)} \cap \Psi^{\mathfrak{R}}_{\omega}(\mathrm{supp}\left(\mathbf{f}_{i}\right)) = \emptyset$. By continuity of $\Psi^{\mathfrak{R}}_{\omega}$ through Assumption \ref{assume_mapping_continuity}, we see that $\left(\Psi^{\mathfrak{R}}_{\omega}\right)^{-1}(\mathcal{B}_{r_{i}(\omega)})$ is open in the state space $\Phi_{i}$ and $\left(\Psi^{\mathfrak{R}}_{\omega}\right)^{-1}(\mathcal{B}_{r_{i}(\omega)}) \cap \mathrm{supp}\left(\mathbf{f}_{i}\right) = \emptyset$, hence $\mathbf{f}_{i} 
\left(\left(\Psi^{\mathfrak{R}}_{\omega}\right)^{-1}(\mathcal{B}_{r_{i}(\omega)}) \right) = 0$. Thus $\mathbb{P}_{R_{i}(\omega)}(\mathcal{B}_{r_{i}(\omega)}) = \mathbf{f}_{i}\left( \left(\Psi^{\mathfrak{R}}_{\omega}\right)^{-1}(\mathcal{B}_{r_{i}(\omega)})\right) = 0$, which shows $r_{i}(\omega) \notin \mathrm{supp}\left(\mathbb{P}_{R_{i}(\omega)}\right)$. So, $\mathrm{supp}\left(\mathbb{P}_{R_{i}(\omega)}\right) \subseteq \overline{\Psi^{\mathfrak{R}}_{\omega}(\mathrm{supp}\left(\mathbf{f}_{i}\right))}$. 
Combining with the forward inclusion, the result \eqref{eq:supp-closure} follows immediately. Additionally, if $\mathrm{supp}\left(\mathbf{f}_{i}\right)$ is compact, then $\Psi^{\mathfrak{R}}_{\omega}(\mathrm{supp}\left(\mathbf{f}_{i}\right))$ is compact in $\partial\mathfrak{R}$ (due to continuity in Assumption \ref{assume_mapping_continuity}), and hence it is closed too and thereby \eqref{eq:supp-closure} reduces to \eqref{eq:supp-equality-compact}. For proving \eqref{eq:backward-inclusion}, let $\mathbf{f}_{i}$ be a distribution on the state space with $(\Psi^{\mathfrak{R}}_{\omega})_{\#}\mathbf{f}_{i} = \mathbb{P}_{R_{i}(\omega)}$. If $\Sigma_{i} \in \mathrm{supp}\left(\mathbf{f}_{i}\right)$, then by the above forward inclusion arguments, we have $\Psi^{\mathfrak{R}}_{\omega}(\Sigma_{i})\in \mathrm{supp}\left(\mathbb{P}_{R_{i}(\omega)}\right)$ which is equivalent to $\Sigma_{i} \in \left(\Psi^{\mathfrak{R}}_{\omega}\right)^{-1}(\mathrm{supp}\left(\mathbb{P}_{R_{i}(\omega)}\right))$. Hence, the result \eqref{eq:backward-inclusion} follows.
\end{proof}

For deterministic SISO LTI dynamical system represented by their transfer functions $P_1(s), P_2(s) \in \mathbf{R}\mathcal{L}_{\infty}$, the $\nu$-gap between them (provided both systems satisfy the winding number constraint as described in \cite{vinnicombe_tac_1993}) is given by 
\begin{subequations}
\label{eqn_nuGap_definition}
\begin{align}
\delta_{\nu}(P_1,P_2) 
&:=
\sup_{\omega}
\kappa(P_1(j\omega),P_2(j\omega)), \quad \text{where}, \\
\kappa(P_1(j\omega),P_2(j\omega))
&:=
\mathrm{d_{chord}}(\phi^{-1}(P_1(j\omega)), \phi^{-1}(P_2(j\omega))) \\
&=
\frac{|P_1(j\omega)-P_2(j\omega)|}{\sqrt{(1+|P_1(j\omega)|^2)(1+|P_2(j\omega)|^2)}}
\end{align}
\end{subequations}
denotes the pointwise-in-frequency gap between $P_1$ and $P_2$ and is exactly equal to the chordal distance between the two frequency-response points after inverse stereographic projection to $\partial\mathcal{R}$. Using this observation, we will now proceed ahead with the comparison theorem to formally establish the fact the proposed frequency domain distance that is described in Section \ref{sec:frequency_domain} through the definition given \eqref{eqn_empirical_distance_p1_p2} never exceeds the proposed time domain distance counterpart that is described in Section  \ref{sec:time_domain} through the definition given by \eqref{eqn_distance_between_distributions_system_models}.

\begin{theorem}
\label{theorem_distances_comparison}
Consider two stochastic dynamical systems, whose state space models $\Sigma_{i}(\theta_{i})$ for system $i \in \{1, 2\}$ are given by \eqref{eqn_perturbed_dynamics_system}. Further,
let the corresponding random transfer functions obtained from the respective state space models $\Sigma_{i}(\theta_{i})$ denoted by $P_{i}(\theta_{i}; s)$ be given by \eqref{eqn_stochastic_transfer_function}. Additionally, let the random parameter $\theta_{i}$ affecting the system $i \in \{1, 2\}$ be governed by the respective distribution $\mathbf{f}_{\theta_{i}}$. Subsequently, let the distribution $\mathbb{P}_{R_{i}(\omega)}$ governing the uncertainty of system $i$ with $i \in \{1, 2\}$ on the boundary of the Riemann sphere be defined using \eqref{eqn_distribution_pushfwd_Riemannsphere} and the distribution $\mathbf{f}_{i}$ that governs the uncertainty of $\Sigma_{i}(\theta_{i})$ be given by \eqref{eqn_distribution_system_models}. Then, for $q \geq 1$, 
\begin{align}
\label{eqn_distance_comparison_result}
\mathrm{d}_{q}(P_1, P_2) 
\leq 
\mathrm{dist^{q}_{\Sigma_{1}, \Sigma_{2}, \delta_{g}}}.
\end{align}
\end{theorem}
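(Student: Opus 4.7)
The plan is to reduce the statement to the classical deterministic inequality relating the pointwise-in-frequency chordal distance to the gap metric, and then to lift that pointwise inequality through the push-forward machinery that connects the two distributional settings. Specifically, the key deterministic fact I would invoke is that for any $P_1, P_2 \in \mathbf{R}\mathcal{L}_{\infty}$ and every $\omega \in \Omega$,
\begin{align*}
\mathrm{d_{chord}}\bigl(\phi^{-1}(P_1(j\omega)), \phi^{-1}(P_2(j\omega))\bigr)
\;\leq\;
\delta_{g}(P_1, P_2),
\end{align*}
which is exactly the pointwise-in-frequency version of Vinnicombe's $\nu$-gap $\leq$ gap inequality \cite{vinnicombe_tac_1993, vinnicombe2001uncertainty} and holds irrespective of any winding-number condition (the latter only matters for claiming equality of $\sup_\omega \kappa$ and $\delta_\nu$). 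Applied to the random pair $\bigl(\mathbf{TF}(\Sigma_1(\theta_1)), \mathbf{TF}(\Sigma_2(\theta_2))\bigr)$ this becomes $\mathrm{d_{chord}}(\Psi^{\mathfrak{R}}_{\omega}(\Sigma_1), \Psi^{\mathfrak{R}}_{\omega}(\Sigma_2)) \leq \delta_g(\Sigma_1, \Sigma_2)$ pointwise, which raised to the power $q \geq 1$ preserves the inequality.

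Next, I would fix an arbitrary frequency $\omega \in \Omega$ and an arbitrary joint distribution $\pi_{\mathbf{f}} \in \Pi(\mathbf{f}_1, \mathbf{f}_2)$, and invoke Lemma \ref{lemma_joint distribution_transport} to push it forward to $\pi_{\omega} := (\Psi^{\mathfrak{R}}_{\omega} \times \Psi^{\mathfrak{R}}_{\omega})_{\#} \pi_{\mathbf{f}} \in \Pi(\mathbb{P}_{R_1(\omega)}, \mathbb{P}_{R_2(\omega)})$. Because $\pi_\omega$ lies in the feasible set over which the Wasserstein infimum in \eqref{eqn_distance_btw_p1_p2} is taken, and because of the standard change-of-variables formula for push-forwards, we obtain
\begin{align*}
W^q_q\bigl(\mathbb{P}_{R_1(\omega)}, \mathbb{P}_{R_2(\omega)}\bigr)
&\leq
\int \mathrm{d_{chord}}(r_1, r_2)^q \, d\pi_{\omega}(r_1, r_2) \\
&= \int \mathrm{d_{chord}}\bigl(\Psi^{\mathfrak{R}}_{\omega}(\Sigma_1), \Psi^{\mathfrak{R}}_{\omega}(\Sigma_2)\bigr)^q \, d\pi_{\mathbf{f}}(\Sigma_1, \Sigma_2) \\
&\leq \int \delta_g(\Sigma_1, \Sigma_2)^q \, d\pi_{\mathbf{f}}(\Sigma_1, \Sigma_2),
\end{align*}
where the last inequality is the pointwise chordal-versus-gap bound applied under the integral.

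Since $\pi_{\mathbf{f}}$ was arbitrary, I would take the infimum over $\pi_{\mathbf{f}} \in \Pi(\mathbf{f}_1, \mathbf{f}_2)$ on the right, which by Definition \ref{define_distance_between_systems_time_domain} equals $\mathrm{dist^{q}_{\Sigma_{1}, \Sigma_{2}, \delta_{g}}}$. The resulting bound $W^q_q(\mathbb{P}_{R_1(\omega)}, \mathbb{P}_{R_2(\omega)}) \leq \mathrm{dist^{q}_{\Sigma_{1}, \Sigma_{2}, \delta_{g}}}$ holds uniformly in $\omega$, so taking $\sup_{\omega \in \Omega}$ on the left yields exactly \eqref{eqn_distance_comparison_result}.

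The main obstacle I anticipate is the step that uses Lemma \ref{lemma_joint distribution_transport} together with measurability of $\Psi^{\mathfrak{R}}_{\omega}$: the push-forward of $\pi_{\mathbf{f}}$ must be verified to be an admissible coupling of the correct Riemann-sphere marginals $\mathbb{P}_{R_1(\omega)}, \mathbb{P}_{R_2(\omega)}$, which in turn relies on Assumption \ref{assume_mapping_continuity} (continuity, hence measurability, of $\Psi^{\mathfrak{R}}_{\omega}$) and on the consistency between the two push-forward constructions \eqref{eqn_distribution_pushfwd_Riemannsphere} and \eqref{eqn_distribution_system_models}. Beyond that bookkeeping, the rest is a clean two-line chain of inequalities driven entirely by the classical pointwise chordal-versus-gap bound and the standard fact that pushing forward a coupling cannot decrease a Wasserstein infimum that ranges over a larger class of couplings.
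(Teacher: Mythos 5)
Your proposal is correct and follows essentially the same route as the paper's own proof: push an arbitrary state-space coupling $\pi_{\mathbf{f}}$ forward to a feasible Riemann-sphere coupling via Lemma \ref{lemma_joint distribution_transport}, bound the chordal cost pointwise by $\delta_{g}$ using the $\nu$-gap $\leq$ gap inequality, and then close with the supremum over $\omega$ and the infimum over couplings. The only cosmetic difference is the order in which you take the final $\sup_{\omega}$ and $\inf_{\pi}$ (the paper takes the supremum first, which is trivial since the right-hand side is frequency-independent), and your remark that the pointwise chordal bound needs no winding-number condition is a correct and worthwhile clarification.
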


\begin{proof}
Let $\pi_{\mathbf{f}} \in \Pi(\mathbf{f}_1,\mathbf{f}_2)$ be a joint distribution defined on the state-space $\Phi_1 \times \Phi_2$. Then, the corresponding joint distribution on the (transfer function) $\mathbf{R}\mathcal{L}_{\infty}$ space with marginal distributions $\mathbf{f}_{P_1}$, $\mathbf{f}_{P_2}$ can be obtained through the push-forward operation of $\pi_{\mathbf{f}}$ under the $\mathbf{TF}$ mapping using \eqref{eqn_joint_dist_state_space_tf_space_relation} as
\begin{align}
\label{eqn_thm_comparison_interim_step_0}
\Pi(\mathbf{f}_{P_1}, \mathbf{f}_{P_2})
\ni
\pi_{\mathbf{P}} 
:= 
(\mathbf{TF}\times \mathbf{TF})_{\#}\,\pi_{\mathbf{f}}.
\end{align}
Further, for each frequency $\omega \geq 0$, the corresponding joint distribution on the Riemann sphere can be obtained using Lemma \ref{lemma_joint distribution_transport} as
\begin{align*}
\Pi\!\big(\mathbb{P}_{R_1(\omega)},\mathbb{P}_{R_2(\omega)}\big)
\ni
\pi_{\omega} 
:= \big(\Phi^{\mathfrak{R}}_{\omega}\times \Phi^{\mathfrak{R}}_{\omega}\big)_{\#} \pi_{\mathbf{f}}.
\end{align*}
By definition of the frequency domain distance in \eqref{eqn_distance_btw_p1_p2} involving the Wasserstein distance defined over the infimum over joint distributions with the transport cost computed using the chordal distance, we see that 
\begin{align}
\label{eqn_thm_comparison_interim_1}
&W_q^{q}
\big(
\mathbb{P}_{R_1(\omega)},\mathbb{P}_{R_2(\omega)}
\big) \nonumber \\
&\leq
\int_{\mathcal{R}_{P_{1}(\omega)} \times \mathcal{R}_{P_{2}(\omega)}} \mathrm{d_{chord}}(r_1(\omega), r_2(\omega))^{q} \, \pi_{\omega}(d r_1, d r_2) \nonumber \\
&= 
\int_{\mathrm{supp}(\mathbf{f}_{1}) \times \mathrm{supp}(\mathbf{f}_{2})} \mathrm{d_{chord}}
\big(
\Phi^{\mathfrak{R}}_{\omega}(\Sigma_1),\Phi^{\mathfrak{R}}_{\omega}(\Sigma_2)
\big)^{q} \, \pi_{\mathbf{f}}(d \Sigma_1, d \Sigma_2).
\end{align}
From \cite{vinnicombe2001uncertainty}, we know that for every pair of transfer functions $(P_1,P_2)$ in the $\mathbf{R}\mathcal{L}_{\infty}$ space, the following inequality holds due to \eqref{eqn_nuGap_definition}:
\begin{equation}
\label{eqn_thm_comparison_interim_2}
\footnotesize
\mathrm{d_{chord}} \big(\phi^{-1}(\mathcal{E}_\omega(P_1)),\phi^{-1}(\mathcal{E}_\omega(P_2))\big)
\leq 
\delta_\nu(P_1,P_2) 
\leq \delta_{g}(P_1,P_2).
\end{equation}
Applying $P_i = \mathbf{TF}(\Sigma_i)$ \& the inequality \eqref{eqn_thm_comparison_interim_2} in \eqref{eqn_thm_comparison_interim_1}, we get
\begin{align*}
&W_q^{q}
\big(
\mathbb{P}_{R_1(\omega)},\mathbb{P}_{R_2(\omega)}
\big) \\
&\leq 
\int_{\mathrm{supp}(\mathbf{f}_{1}) \times \mathrm{supp}(\mathbf{f}_{2})} \delta_{g}\!\big(\mathbf{TF}(\Sigma_1),\mathbf{TF}(\Sigma_2)\big)^{q} \pi_{\mathbf{f}}(d \Sigma_1, d \Sigma_2) \\
&= \int_{\mathrm{supp}\left(\mathbf{f}_{P_{1}}\right) \times \mathrm{supp}\left(\mathbf{f}_{P_{2}}\right)} \delta_{g}(P_1,P_2)^{q}\, \pi_{\mathbf{P}}(d P_1, d P_2),
\end{align*}
where we applied \eqref{eqn_thm_comparison_interim_step_0} to get the last equality. Now taking supremum over $\omega\in\mathbb{R}$ on the both sides, we get
\begin{align*}
&\underbrace{\sup_{\omega \geq 0} W_q^{q}\big(\mathbb{P}_{R_1(\omega)},\mathbb{P}_{R_2(\omega)}}_{= \mathrm{d}_q(P_1,P_2)}
\big) \\
&\leq
\sup_{\omega \geq 0}
\int_{\mathrm{supp}\left(\mathbf{f}_{P_{1}}\right) \times \mathrm{supp}\left(\mathbf{f}_{P_{2}}\right)} \delta_{g}(P_1,P_2)^{q}\, \pi_{\mathbf{P}}(d P_1,d P_2) \\
&=
\int_{\mathrm{supp}\left(\mathbf{f}_{P_{1}}\right) \times \mathrm{supp}\left(\mathbf{f}_{P_{2}}\right)} \delta_{g}(P_1,P_2)^{q}\, \pi_{\mathbf{P}}(d P_1,d P_2),
\end{align*}
Taking infimum over all joint distributions $\pi_{\mathbf{P}}\in\Pi(\mathbf{f}_{P_{1}},\mathbf{f}_{P_{2}})$ on both sides, we get
\begin{align*}
&\underbrace{\inf_{\pi_{\mathbf{P}}\in\Pi(\mathbf{f}_{P_{1}},\mathbf{f}_{P_{2}})} \mathrm{d}_q(P_1,P_2) }_{= \mathrm{d}_q(P_1,P_2)}    \\
&\leq 
\inf_{\pi_{\mathbf{P}}\in\Pi(\mathbf{f}_{P_{1}},\mathbf{f}_{P_{2}})}   
\int_{\mathrm{supp}\left(\mathbf{f}_{P_{1}}\right) \times \mathrm{supp}\left(\mathbf{f}_{P_{2}}\right)} \!\!\delta_{g}(P_1,P_2)^{q}\, \pi_{\mathbf{P}}(d P_1,d P_2) \\
&=
\mathrm{dist}^{q}_{\Sigma_1,\Sigma_2,\delta_{g}}.
\end{align*}
This completes the proof.
\end{proof}

\subsection*{Numerical Demonstration}
Given two stochastic LTI systems, to demonstrate that the frequency domain distance never exceeds the time domain distance, we consider two different second order LTI systems which vary due to their correspondingly random damping and resonant frequency values. Specifically, consider two second order systems whose nominal damping and resonant frequency values are given by $\bar{\zeta}_{1} = 0.35$, $\bar{\zeta}_{2} = 0.55$, $\bar{\omega}_{n_{1}} = 1.8$, $\bar{\omega}_{n_{2}} = 1.2$ respectively. Then, the nominal transfer function models of both the systems are given by 
\begin{align}
\label{eqn_nominal_models_comparison_simulation}
\bar{P}_1(s) 
=     
\frac{1}{s^{2} + 1.26s + 3.24},
\quad 
\bar{P}_2(s) 
=     
\frac{1}{s^{2} + 1.32s + 1.44}.
\end{align}
To compute the distances, $N = 100$ samples of perturbed plant models for both the systems were formed by perturbing along the lines of \eqref{eqn_lemma_affine_parametric_plant_matrices}, the corresponding nominal models of both the systems given by \eqref{eqn_nominal_models_comparison_simulation}. The random parameters $\theta_{i}$ that were used to generate the perturbed models of the system $i \in \{1, 2\}$ are given by $\theta_{i} \sim \mathcal{N}(\mu_{\theta_{i}}, \Sigma_{\theta_{i}})$, where
\begin{align*}
\mu_{\theta_{1}} 
&=
\begin{bmatrix}
0.10 \\ -0.05 \\ 0.02  
\end{bmatrix},  
\Sigma_{\theta_{1}}
=
\begin{bmatrix}
0.15^{2} & 0 & 0 \\
0 & 0.20^{2} & 0 \\
0 & 0 & 0.10^{2}
\end{bmatrix}, \\
\mu_{\theta_{2}} 
&=
\begin{bmatrix}
-0.08 \\ 0.06 \\ -0.01  
\end{bmatrix},
\Sigma_{\theta_{2}}
=
\begin{bmatrix}
0.12^{2} & 0 & 0 \\
0 & 0.18^{2} & 0 \\
0 & 0 & 0.08^{2}
\end{bmatrix}.
\end{align*}
To compute the frequency domain distance, a frequency grid in the log space between $[10^{-2}, 10^{2}]$ rad/sec was discretized into $M = 100$ points. Precisely speaking, $\Omega_{M} = \mathrm{logspace}(10^{-2}, 10^{2}, 100)$. The quantities of interests namely the frequency domain distance $\mathrm{d}_{q}(P_1, P_2)$ and the time domain distance $\mathrm{dist}_{\Sigma_{1}, \Sigma_{2}, \delta_{g}}$ were computed using \eqref{eqn_distance_btw_p1_p2} and \eqref{eqn_distance_between_distributions_system_models} respectively. In both the distance computations, the corresponding type-$1$ Wasserstein distance computation was carried out using the linear programming technique. As a result of the computation, we obtained the frequency domain distance $\mathrm{d}_{q}(P_1, P_2) = 0.3795$, the time domain distance $\mathrm{dist}_{\Sigma_{1}, \Sigma_{2}, \delta_{g}} = 0.3812$ and the gap metric between the nominal models given by \eqref{eqn_nominal_models_comparison_simulation} was found out to be $0.3822$. Clearly as expected, we obtained $\mathrm{d}_{q}(P_1, P_2) \leq \mathrm{dist}_{\Sigma_{1}, \Sigma_{2}, \delta_{g}}$ and thereby agreeing to the claims of Theorem \ref{theorem_distances_comparison}. 
\section{Conclusions}
\label{sec:conclusions}
A new distance metric between two SISO stochastic LTI dynamical systems was presented both in the frequency domain and in the time domain. In the frequency domain, the proposed distance corresponds to the worst-case-in-frequency chordal distance metric induced distance between distributions characterising the uncertainties of systems in the Riemann sphere. Analogously, the proposed distance in the time domain corresponds to the gap metric induced type-q Wasserstein distance between the push-forward measures under both systems' corresponding measurable maps from the parameter space to their respective space of system plants. For both the frequency domain and the time domain settings, upper bounds and lower bounds for the proposed distances were given. It was also shown that for stochastic LTI systems, the proposed frequency domain distance measure never exceeds the proposed time domain distance measure counterpart.\\

There are several promising future  directions to expand this research and some of them are listed below:
\begin{itemize}
    \item Seek to extend the study from SISO systems to MIMO systems and further to nonlinear systems
    \item A preliminary step towards the above extension would be to investigate the probabilistic robustness for linear time varying (LTV) systems by adopting the ideas of \cite{cantoni2013SIAM, khong2014gaptimevarying} and adding probabilistic rigour on top of it and extending it to distance between stochastic LTV systems as done in this manuscript. 
    \item Another interesting direction of research will be to investigate the probabilistic robust stability of controllers. That is, given a stabilising controller for one stochastic system, we should investigate the probability of that controller stabilising another stochastic system in the vicinity of the first stochastic system where the vicinity is measured using the proposed distance measure.
    \item It would also be interesting to study probabilistic guarantees on the performance variations for the same controller trying to control two different stochastic systems. 
\end{itemize}



\bibliographystyle{IEEEtran}
\bibliography{bibliography}

\begin{IEEEbiography}[{\includegraphics[width=1in,height=1.25in,clip,keepaspectratio]{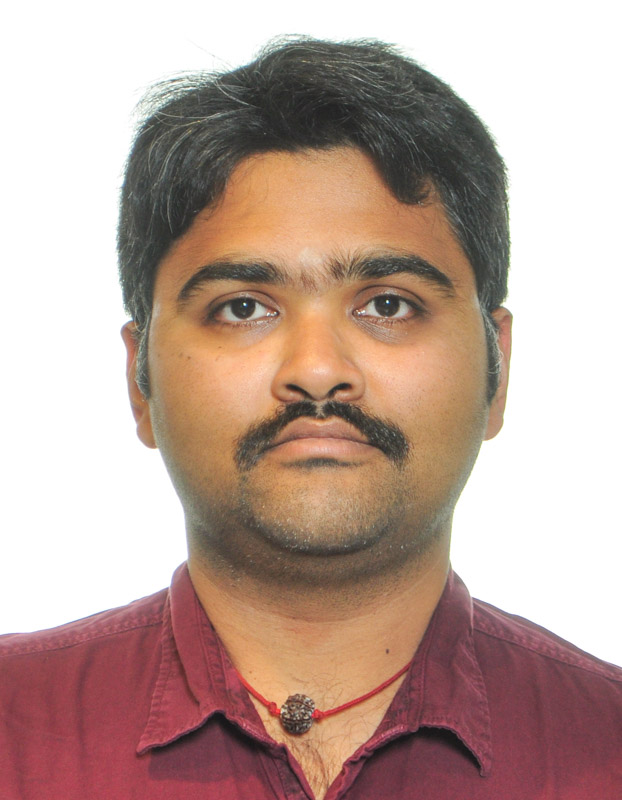}}]{Venkatraman Renganathan} (Member, IEEE) received his Bachelors degree in electrical and electronics engineering from the Government College of Technology, Anna University, Coimbatore, India in 2011. Further, he obtained his Masters degree in electrical engineering with focus on control systems from Arizona State University, USA, in 2016 and the Ph.D. degree in mechanical engineering with emphasis on dynamics and control from The University of Texas at Dallas, USA in 2021. He was a postdoctoral fellow at the department of automatic control at Lund University in Sweden from August 2021 till August 2024. Currently, he is a lecturer at the faculty of engineering \& applied sciences in Cranfield University, United Kingdom. His research interests include probabilistic robust control, distributed adaptive control, and risk bounded motion planning.
\end{IEEEbiography}

\begin{IEEEbiography}
{Sei Zhen Khong} (Senior Member, IEEE) received the Bachelor of Electrical Engineering degree (with first class honours) and the Ph.D. degree in Electrical Engineering from the University of Melbourne, Australia, in 2008 and 2012, respectively. He held research positions at the Department of Electrical and Electronic Engineering, University of Melbourne, Australia, Department of Automatic Control, Lund University, Sweden, Institute for Mathematics and its Applications, University of Minnesota Twin Cities, USA, and Department of Electrical and Electronic Engineering, University of Hong Kong, China. He is now with the Department of Electrical Engineering, National Sun Yat-sen University, Taiwan. His research interests include network control, systems theory, and optimization with applications to systems biology, power systems, and electromechanical systems.
\end{IEEEbiography}

\end{document}